\numberwithin{equation}{section}
\newtheorem{theorem}{Theorem}[section]
\newtheorem{lemma}[theorem]{Lemma}
\newtheorem{proposition}[theorem]{Proposition}
\theoremstyle{remark}
\newtheorem{ex}[theorem]{Example}
\newtheorem*{theorem*}{Theorem}
\newtheorem{definition}{Definition}
\newcommand{\T}{\mathbb{T}}
	\newcommand{\cc}{\overline}
	\newcommandx{\concern}[2][1=]{\todo[color = red!70!,#1]{Concern: #2}} 
	\newcommandx{\refq}[2][1=]{\todo[color = yellow!40!,#1,]{Reference: #2}} 
	\newcommandx{\wording}[2][1=]{\todo[color = violet!50!,#1,]{Wording: #2}} 
	\newcommandx{\alan}[2][1=]{\todo[color = green!25!,#1]{#2}}
	\newcommandx{\mere}[2][1=]{\todo[color = blue!25!,#1]{#2}}
        \newcommand{\McC}{\raise.5ex\hbox{c}}
\title[Rational inner skew-products]{Dynamics of low-degree rational inner skew-products on $\mathbb{T}^2$}
\author[Sola]{Alan Sola}
\address{Department of Mathematics, Stockholm University, 106 91 Stockholm, Sweden}
\email{sola@math.su.se}
\author[Tully-Doyle]{Ryan Tully-Doyle$^\dagger$}
\address{Department of Mathematics, California Polytechnic State University, San Luis Obispo, CA 93407, USA}
\email{rtullydo@calpoly.edu}
\date{\today}
\subjclass[2010]{37F10 (primary);  37F80 (secondary).}
\keywords{Rational inner function, skew-product, fixed points, fiber dynamics}
\thanks{$\dagger$ Partially supported by National Science Foundation DMS Analysis Grant 2055098}
\dedicatory{Dedicated to H\aa kan Hedenmalm on the occasion of his sixtieth birthday}
\begin{document}

\begin{abstract} 
	We examine iteration of certain skew-products on the bidisk whose components are rational inner functions, with emphasis on simple maps of the form $\Phi(z_1,z_2) = (\phi(z_1,z_2), z_2)$. If $\phi$ has degree $1$ in the first variable, the dynamics on each horizontal fiber can be described in terms of M\"obius transformations but the global dynamics on the $2$-torus exhibit some complexity, encoded in terms of certain $\mathbb{T}^2$-symmetric polynomials. We describe the dynamical behavior of such mappings $\Phi$ and give criteria for different configurations of fixed point curves and rotation belts in terms of zeros of a related one-variable polynomial.
 \end{abstract}
\maketitle

\section{Introduction and overview}\label{sec:Intro}
 Iteration of a rational function $R(z)=\frac{q(z)}{p(z)}$ on the Riemann sphere, that is, the study of
\[z\mapsto R^n(z)=(R\circ R \circ \cdots \circ R)(z) \quad (n=1,2,\ldots)\] 
on $\mathbb{C}_{\infty}=\mathbb{C}\cup\{\infty\}$,
is a well-known topic in mathematics, discussed in many textbooks (e.g. \cite{BearBook, CGBook, MilBook}) and illustrated in beautiful computer images. The theory is quite mature, but important new results are still being discovered. The higher-dimensional theory, addressing iteration of $n$-variable polynomial or rational mappings $R$ is of later date but is rapidly developing. See, for instance, \cite{ForSurv,SibSurv, MilBook}, and the references therein, for basic overviews of dynamics in several complex variables.

In a different direction, considering self-maps of special bounded domains in $\mathbb{C}^n$ (the unit disk in the complex plane, the unit $n$-ball) allows for the study of iteration of functions that are not necessarily defined throughout $\mathbb{C}^n$, and leads to interesting boundary phenomena not observed in the unbounded setting. An important example in this latter direction is the classical Denjoy-Wolff theorem \cite{CGBook} concerning fixed points of analytic self-maps of the unit disk; there are many subsequent extensions of the original result to different settings.

Our investigation concerns the study of the dynamics certain self-maps of the unit bidisk
\[\mathbb{D}^2=\{(z_1,z_2)\in \mathbb{C}^2\colon |z_j|<1, j=1,2\}\]
and seeks to establish some basic facts about their iteration theory. There are many works addressing iteration of analytic self-maps of the bidisk and higher-dimensional polydisks (see, for instance, the papers \cite{Aba98, AG16,C32, F05, H54} and the thesis \cite{NThesis} for some background). Our focus is on obtaining detailed results for a restricted class of rational mappings by using elementary means.

We say that a mapping of the form
\[\Phi\colon \mathbb{D}^2\to \mathbb{D}^2\]
\[(z_1,z_2)\mapsto (\phi_1(z_1,z_2), \phi_2(z_1,z_2))\]
is a {\it rational inner mapping} (RIM) if each component $\phi_j$ is a {\it rational inner function} on $\mathbb{D}^2$. A rational inner function in turn is an analytic function of the form
\[\psi(z_1,z_2)=\frac{q(z_1,z_2)}{p(z_1,z_2)},\]
with $q,p\in \mathbb{C}[z_1,z_2]$ and $p(z)\neq 0$ in $\mathbb{D}^2$, which is bounded in $\mathbb{D}^2$ and has unimodular non-tangential boundary values at almost every point $\zeta \in \mathbb{T}^2=\{(\zeta_1, \zeta_2)\in \mathbb{C}^2\colon |\zeta_j|=1, j=1,2\}$. We recall the basic fact that $\mathbb{T}^2$ is the {\it distinguished boundary} of the bidisk, the subset of the boundary $\partial \mathbb{D}^2$ where most interesting function-theoretic phenomena on $\mathbb{D}^2$ are observed and the maximum modulus principle is supported. By a theorem of Knese \cite{Kne15}, if $\phi_j$ is rational inner then the boundary values $\phi_j^*(\zeta)$ exist as unimodular numbers at {\it every} point of $\mathbb{T}^2$, and so we can view a rational inner mapping $\Phi$ as inducing a map $\Phi\colon \overline{\mathbb{D}^2}\to \overline{\mathbb{D}^2}$, with $\Phi$ sending $\mathbb{T}^2$ to $\mathbb{T}^2$. In the same way, the $n$th iterate of $\Phi$
\[\Phi^n(z_1,z_2)=(\Phi\circ \Phi\circ\cdots  \circ\Phi)(z_1,z_2)\]
can be viewed as a mapping of the closure of the bidisk into itself that fixes the distinguished boundary.

Iteration of certain classes of rational inner mappings on the bidisk and on $\mathbb{T}^2$ has been considered in a number of papers. For instance, monomial maps of the form $(z_1,z_2)\mapsto (z_1^{m_1}z_2^{n_1}, z_1^{m_2}z_2^{n_2})$ appear in \cite{F03} and in other works. In \cite{PS08,PR10} (see also \cite{SBJ17} for some applications), the component maps $\phi_j$ are assumed to be of the special type
\[\psi_j(z_1,z_2)=B_{j,1}(z_1)\cdot B_{j,2}(z_2)\]
where $B_{j,k}$ are one-variable {\it finite Blaschke products}. Recall that these are functions $B\colon \mathbb{D}\to\mathbb{D}$ of the form
\[B(z)=e^{i\alpha}z^m\prod_{k=1}^n\frac{z-a_k}{1-\overline{a}_kz},\]
where $\{a_1,\ldots, a_n\}\subset \mathbb{D}$, $\alpha\in \mathbb{R}$, and $m,n\in \mathbb{N}$. It is apparent that a finite Blaschke product extends continuously to the unit circle $\mathbb{T}$, but the dynamics of one-variable Blaschke products nevertheless exhibit complicated features; see for instance \cite[Chapter 15]{MilBook}. Similarly, the works \cite{F03,PS08,PR10} uncover a rich dynamical structure associated with monomial maps and two-dimensional Blaschke products.

In contrast with one-variable Blaschke products, a general rational inner function $\psi=q/p$ in $\mathbb{D}^2$ can have boundary singularities: these occur at points $\tau=(\tau_1,\tau_2)\in \mathbb{T}^2$ where $q(\tau_1,\tau_2)=p(\tau_1,\tau_2)=0$, and represent a genuinely new higher-dimensional phenomenon. The function $\psi$ is in general discontinuous at $\tau \in \mathbb{T}^2$, even though $\psi^{*}(\tau)$ always exists, meaning that a RIM $\Phi$ need not be a continuous self-map of $\overline{\mathbb{D}^2}$.  This fact will be the source of several interesting phenomena that we observe in this paper.

Apart from some preliminary observations concerning RIMs, we mostly restrict our attention to {\it rational inner skew-products} (RISPs). These are rational inner mappings of the special form
\[\Phi(z_1,z_2)=(\phi_1(z_1,z_2), \phi_2(z_2)),\]
with $\phi_1$ rational inner in $\mathbb{D}^2$, and $\phi_2$ a rational inner function in one variable, viewed as a function on $\mathbb{D}^2$ in the obvious way. Skew-products have been studied extensively in the polynomial setting, see e.g. \cite{J99, PR19} and the references therein. In fact, we focus on the very simplest case of skew-mappings
\begin{equation}
(z_1,z_2)\mapsto (\phi(z_1,z_2), z_2),
\label{eq:RISPdef}
\end{equation}
where $\phi$ is a rational inner function in $\mathbb{D}^2$ having {\it bidegree} $(1,n)$, that is, degree $1$ in $z_1$ and degree $n\in \mathbb{N}$ in $z_2$.  (We drop the subscript in the first component to lighten notation.) Skew-products of the form \eqref{eq:RISPdef} fix horizontal lines so that the main dynamics take place along one-dimensional sets, but even this apparently very simple case gives rise to interesting behavior, as is suggested in the images below. The advantage in working with such low-degree RISPs is that the extremely simple classification of fiber dynamics (discussed below) allows us to focus in detail on the global features of iteration on $\mathbb{T}^2$. 

We proceed to describe the contents of our paper. We begin by examining three elementary examples of RISPs in Section \ref{sec:Ex}. These examples serve as guides for our general investigations and illustrate most of the features we uncover in a more general setting. In Section \ref{sec:Prelim} we introduce basic definitions for RIMs and RISPs, record some facts about RIFs and their numerator and denominator polynomials, and discuss iteration of M\"obius transformations in the unit disk. Section \ref{sec:main} contains the main results of our paper. After some preliminary remarks about general RIMs and certain associated essentially $\mathbb{T}^2$-symmetric polynomials, we focus on degree $(1,n)$ RISPs. We then introduce and study rotation belts associated with a RISP, strips on which the dynamical actions on fibers are conjugate to rotations, and give an estimate on their number. We investigate how different components of the fixed point set of a RISPs can come together at a singular fixed point, and give criteria for different configurations to be present. The main tool we use is a one-variable polynomial $Q_{\alpha}$ built from the numerator and denominator polynomials of the first-component map $\phi$. We conclude in \ref{sec:morex} by exhibiting further examples of RISPs having more intricate dynamical behavior, and serving as illustrations of our main results.

\section{Three examples}\label{sec:Ex}

 A fundamental result due to Rudin and Stout (see \cite[Chapter 5]{Rud69}) asserts that any RIF on $\mathbb{D}^2$ can be written as
\begin{equation}\label{eq:rudin}
\phi(z)=e^{i\alpha}z_1^{\beta_1}z_ 2^{\beta_2}\frac{\tilde{p}(z)}{p(z)}
\end{equation}
where $\alpha\in \mathbb{R}$, $\beta_1,\beta_2 \in \mathbb{N}$, the $p\in\mathbb{C}[z_1,z_2]$ has no zeros in $\mathbb{D}^2$, and $\tilde{p}$ is the 
{\it reflection} of  $p$, defined as
\begin{equation}
\tilde{p}(z_1,z_2)=z_1^mz_2^n\overline{p\left(\frac{1}{\bar{z}_1},\frac{1}{\bar{z}_2}\right)}.
\label{eq:preflection}
\end{equation}
Here the pair $(m,n)$ is the bidegree of $p$, that is $p$ has degree $m$ in $z_1$ and degree $n$ in $z_2$. 

This structural result makes examples of rational inner functions particularly easy to construct - all one requires is a polynomial with no zeros in the bidisk. The following such examples can be fruitfully analyzed using elementary means. 
\begin{ex}\label{ex:fave}
\begin{figure}[h!]
    \subfigure[Vertical lines before mapping.]
      {\includegraphics[width=0.4 \textwidth]{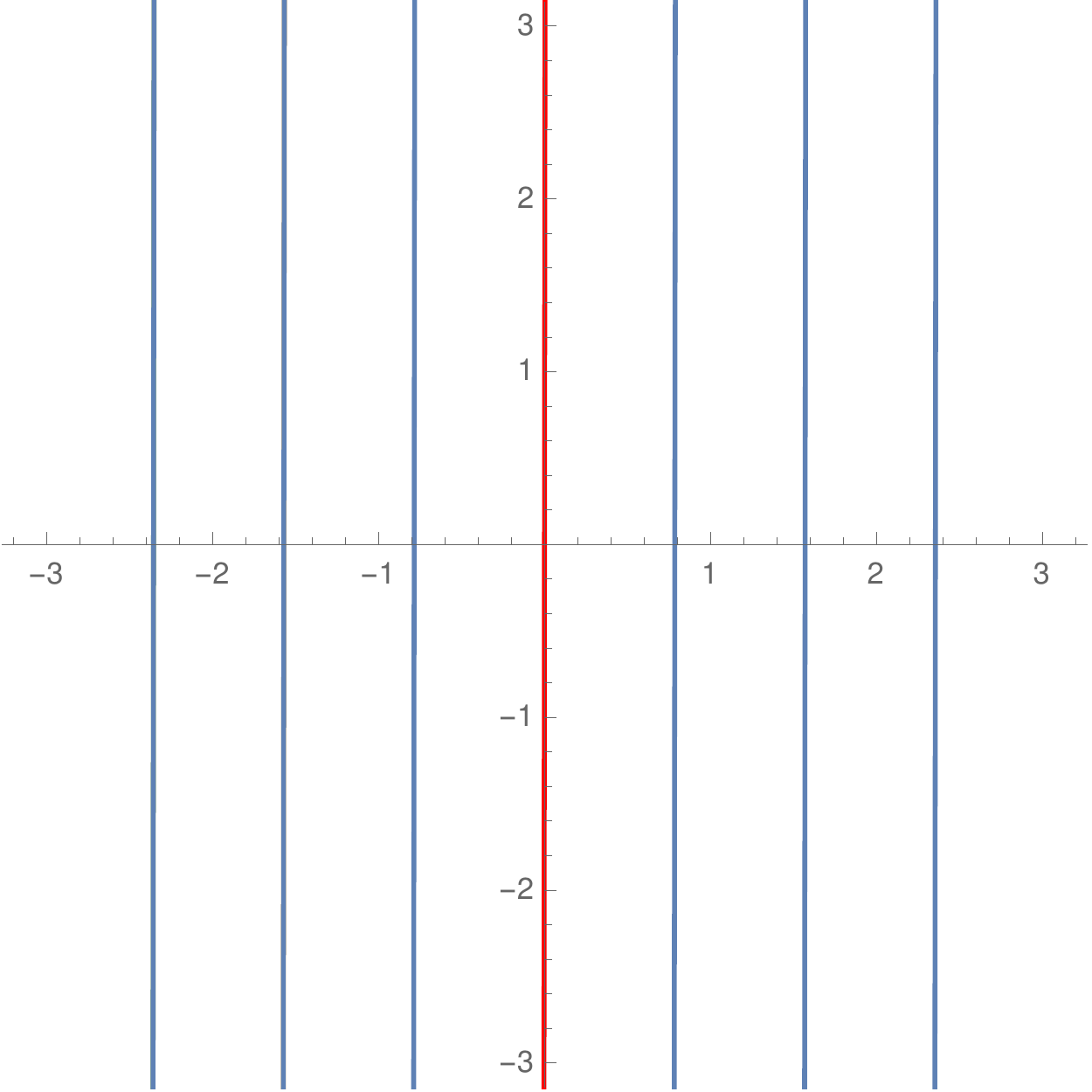}}
    \hfill
    \subfigure[$\Phi^n$ for $n=1$.]
      {\includegraphics[width=0.4 \textwidth]{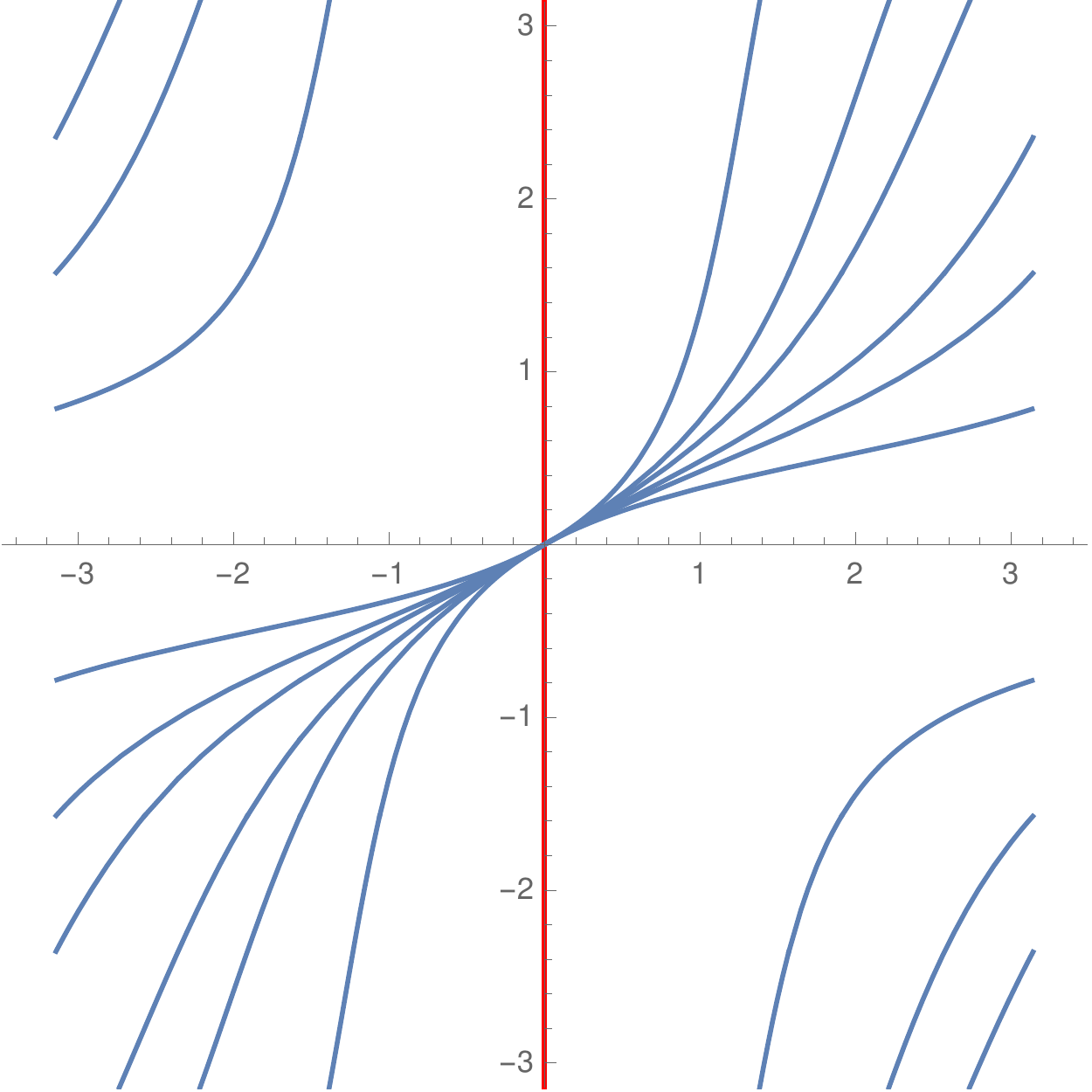}}
      \subfigure[$\Phi^n$ for $n=2$]
      {\includegraphics[width=0.4 \textwidth]{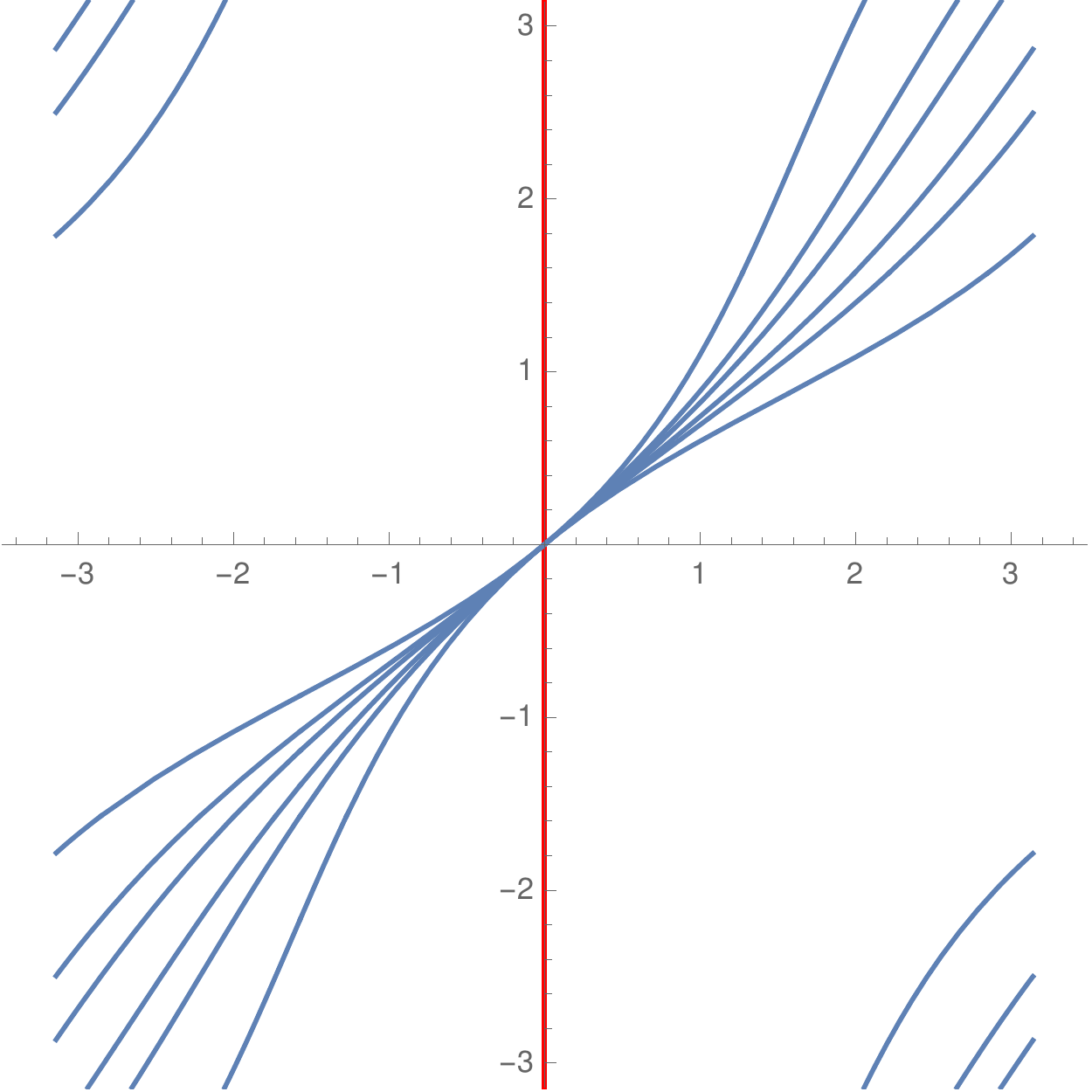}}
    \hfill
    \subfigure[$\Phi^n$ for $n=3$.]
      {\includegraphics[width=0.4 \textwidth]{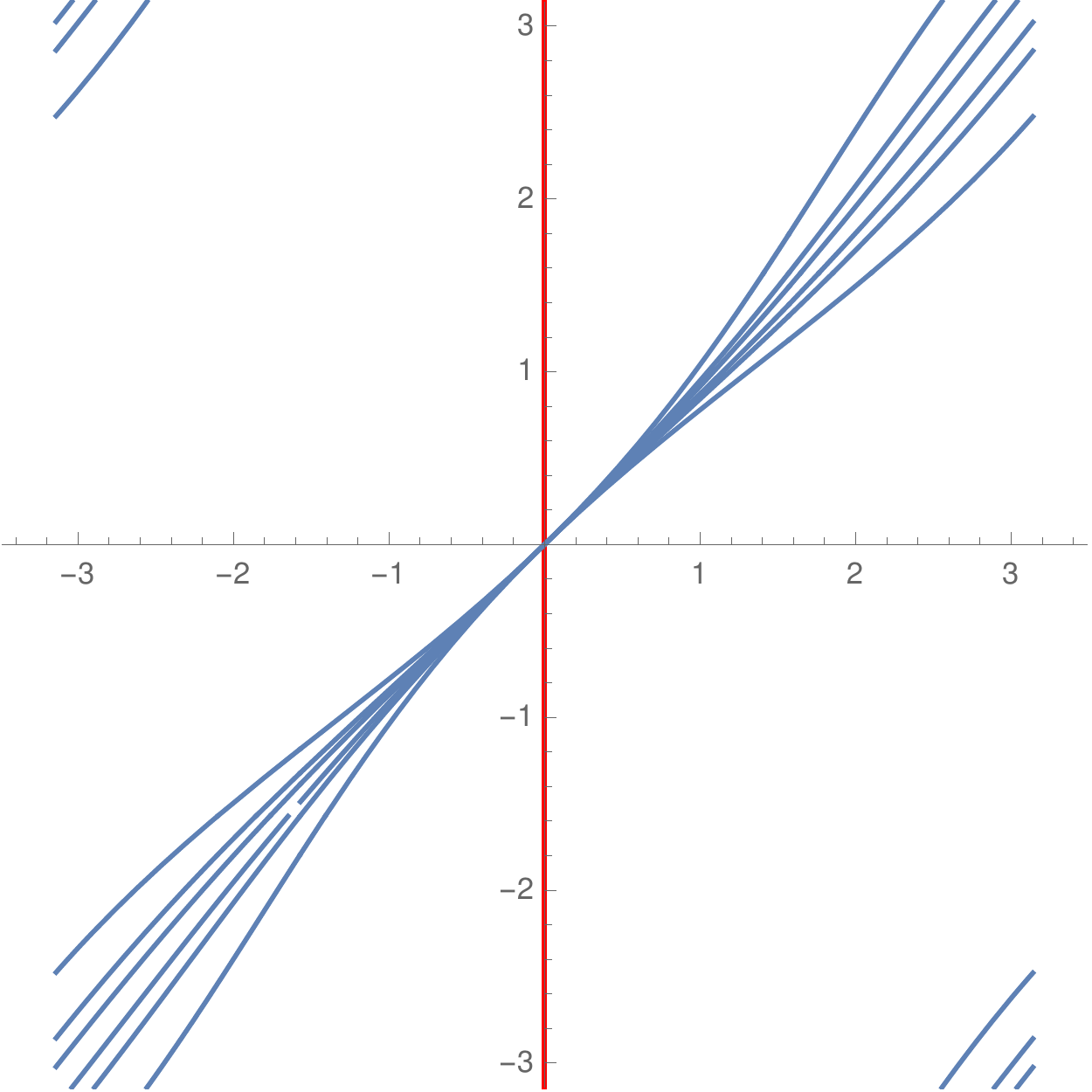}}
      \subfigure[$\Phi^n$ for $n=4$]
      {\includegraphics[width=0.4 \textwidth]{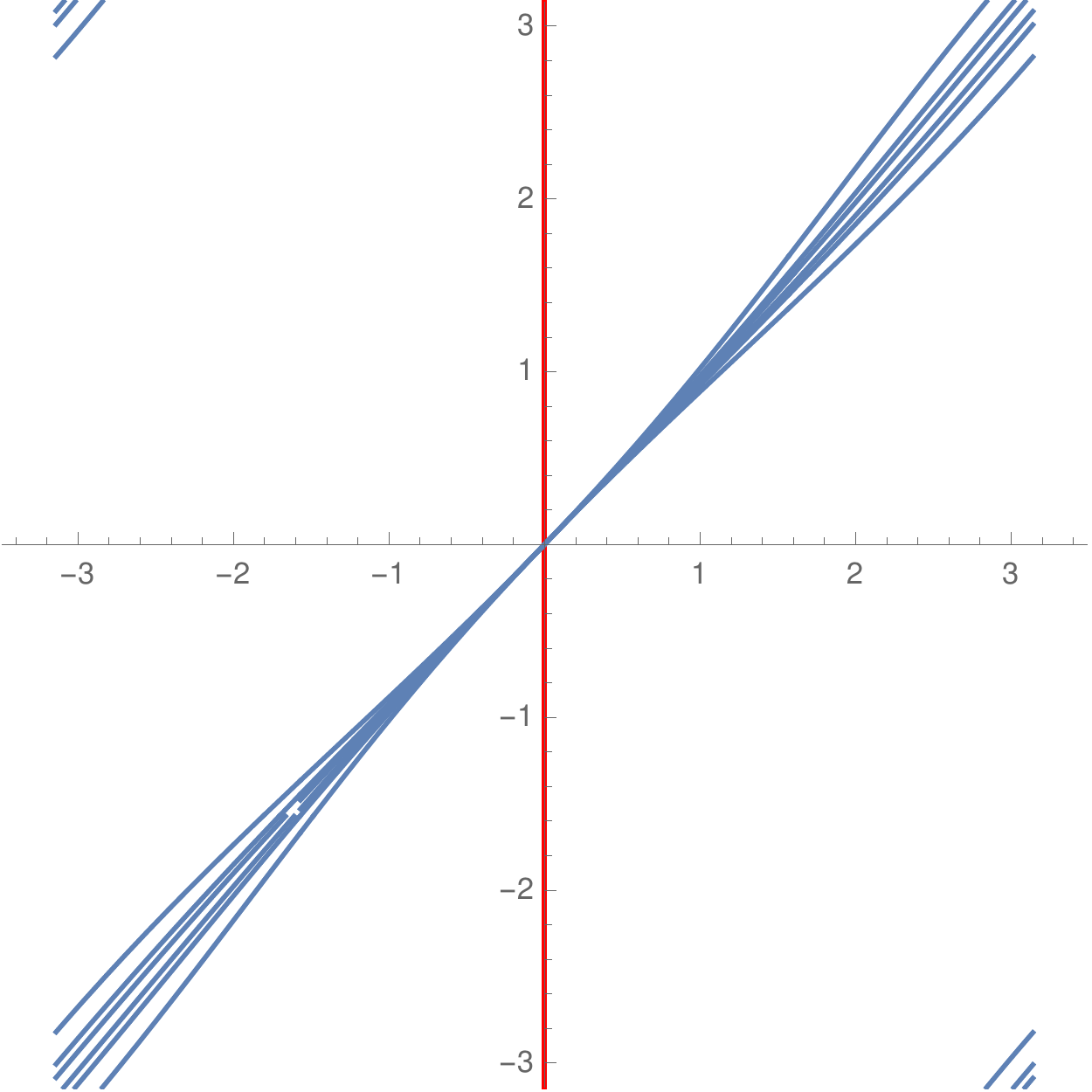}}
    \hfill
    \subfigure[$\Phi^n$ for $n=5$.]
      {\includegraphics[width=0.4 \textwidth]{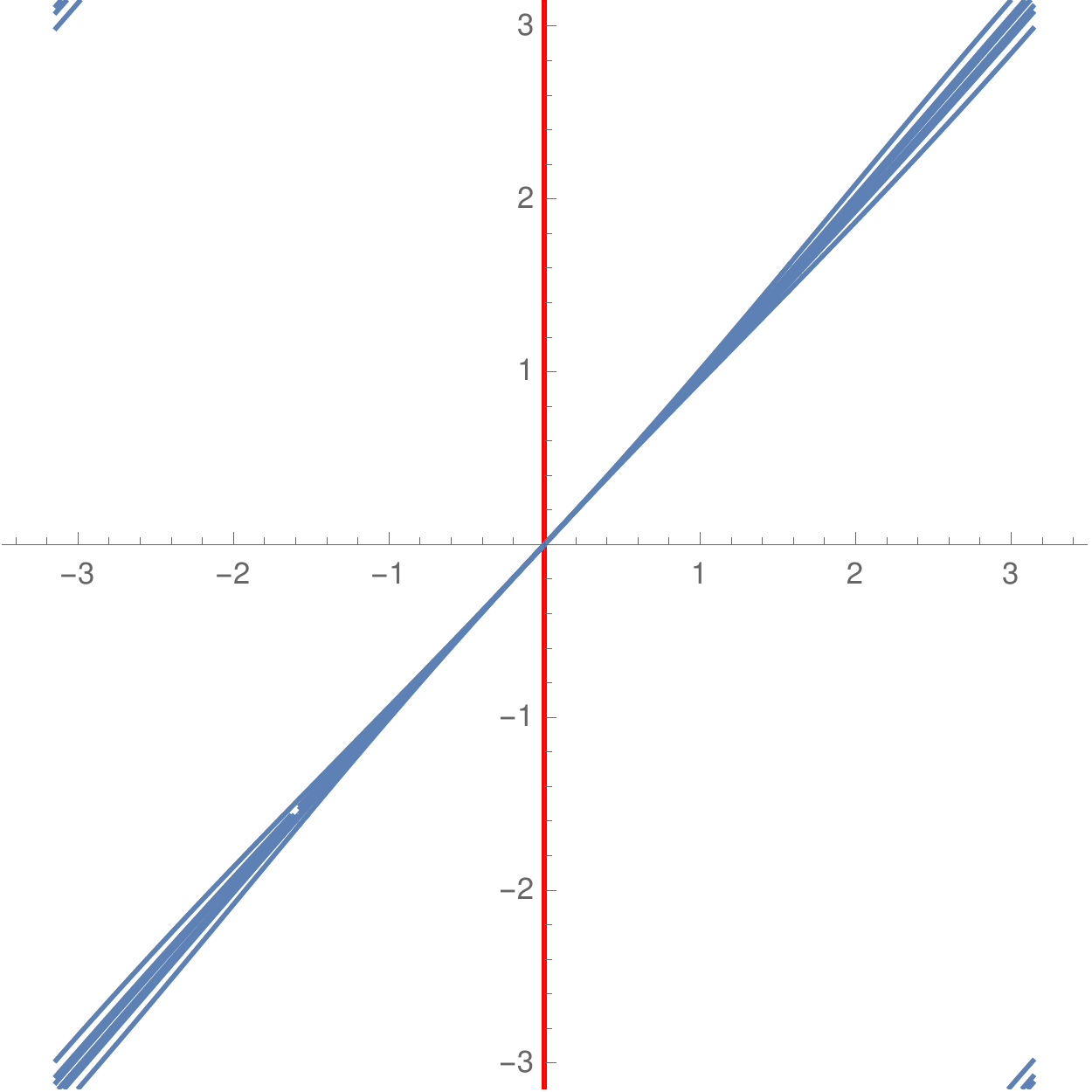}}
  \caption{\textsl{Iteration of $\Phi=(-\frac{2z_1z_2-z_1-z_2}{2-z_1-z_2},z_2)$ on $\mathbb{T}^2$.}}
  \label{faveplots}
  
\end{figure}
Consider the RISP
\[\Phi(z_1,z_2)=\left(-\frac{2z_1z_2-z_1-z_2}{2-z_1-z_2},z_2\right).\]
The first component $\phi=-\frac{\tilde{p}}{p}$ is a degree $(1,1)$ RIF having a single singularity at $(1,1)$. A computation reveals that the boundary value $\phi^{*}(1,1)=1$, and so $(1,1)$ is a fixed point of 
$\Phi$. In fact, we have $\Phi(z_1,1)=(1,1)$, so the entire line $\{z_2=1\}$ is mapped to $(1,1)$. 
 Next, we note that $\phi^*(1,z_2)=1$ and $\phi^*(z_2,z_2)=z_2$ meaning that both the line $\{z_1=1\}$ and the diagonal are comprised of fixed points of $\Phi$. Solving 
$\phi^{*}(z)=z_1$, or more precisely, the equation
\[\tilde{p}(z)-z_1p(z)=0,\]
confirms that these are all the fixed points of $\Phi$. 

We now iterate $\Phi$. We visualize the action of the iterates by viewing the $2$-torus as $[-\pi, \pi]^2$ and applying $\Phi^n$ to vertical lines of the form $\{(a\pi, t_2)\}$ for several choices of $-1<a<1$. Figure \ref{faveplots} suggests that the iterates converge as $n$ grows. 
Using induction, one can show that, for $n=1,2,\ldots$,
\[\Phi^n(z_1,z_2)=\left(-\frac{2^nz_1z_2-z_1-(2^{n}-1)z_2}{2^n-(2^{n}-1)z_1-z_2}, z_2\right).\]
Hence, for all $(z_1,z_2)\in \overline{\mathbb{D}^2}\setminus \{z_1=1\}$, we have
\[\Phi^n(z_1,z_2)\to (z_2,z_2)  \quad \textrm{as} \quad n\to \infty.\]
This means that each point on the diagonal is attractive on its corresponding horizontal fiber, while points on $\{z_1=1\}$ are repelling fixed points. The special fiber $\{z_2=1\}$ is immediately collapsed into $(1,1)$ by $\Phi$, explaining the pinched appearance of the images.

The special role the lines $\{z_1=1\}$ and $\{z_2=1\}$ play here is related to the fact that they are the level sets of $\phi$ corresponding to the non-tangential value $\lambda=1$, which is attained at the singularity of $\phi$ at $(1,1)$ (see \cite{BPS18}). This will be discussed in detail below.
\end{ex}

\begin{ex}\label{ex:para}
\begin{figure}[h!]
    \subfigure[Vertical lines.]
      {\includegraphics[width=0.35 \textwidth]{vertparacurves.pdf}}
\hfill
    \subfigure[$\Phi^n$ for $n=1$.]
      {\includegraphics[width=0.35 \textwidth]{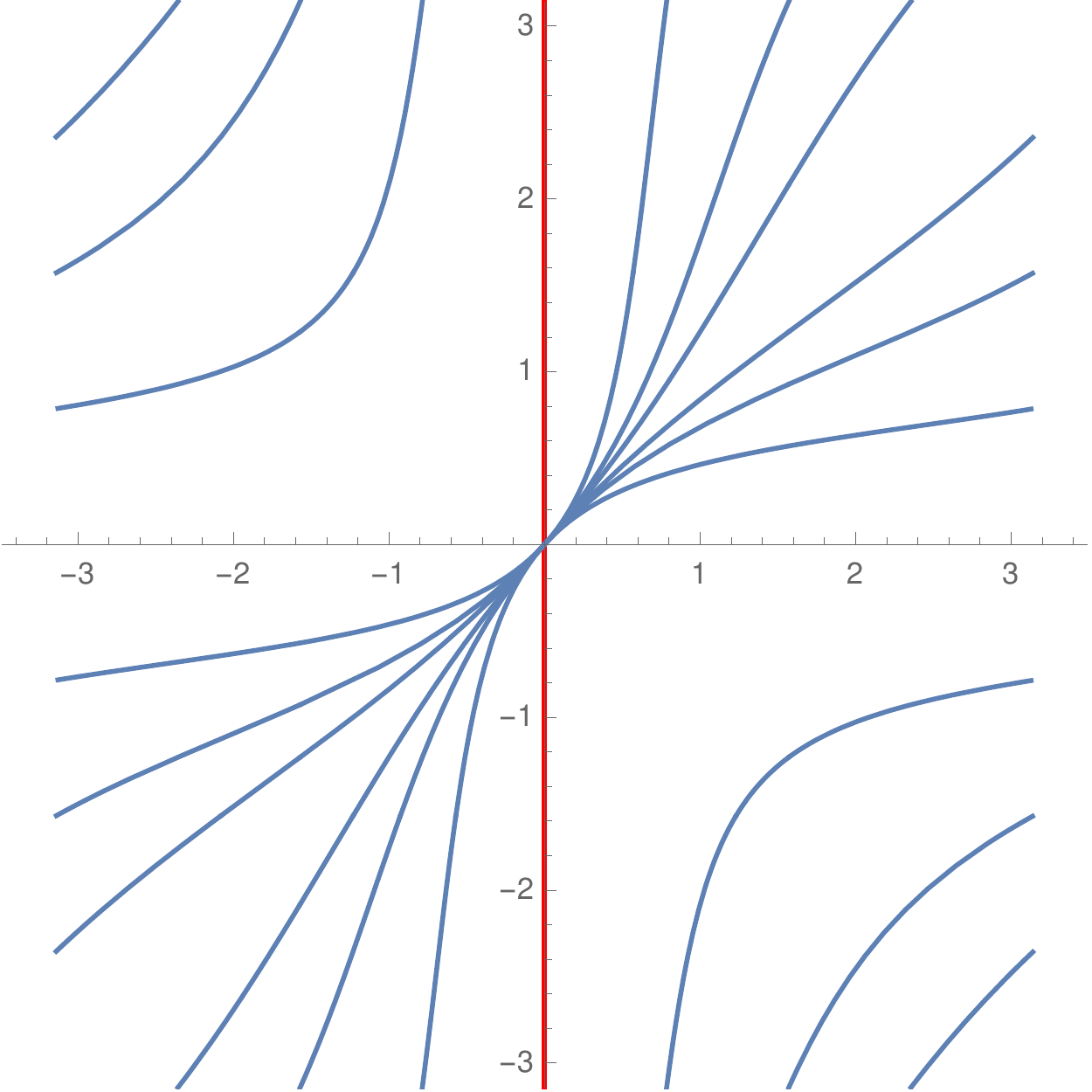}}
      \subfigure[$\Phi^n$ for $n=2$]
      {\includegraphics[width=0.35 \textwidth]{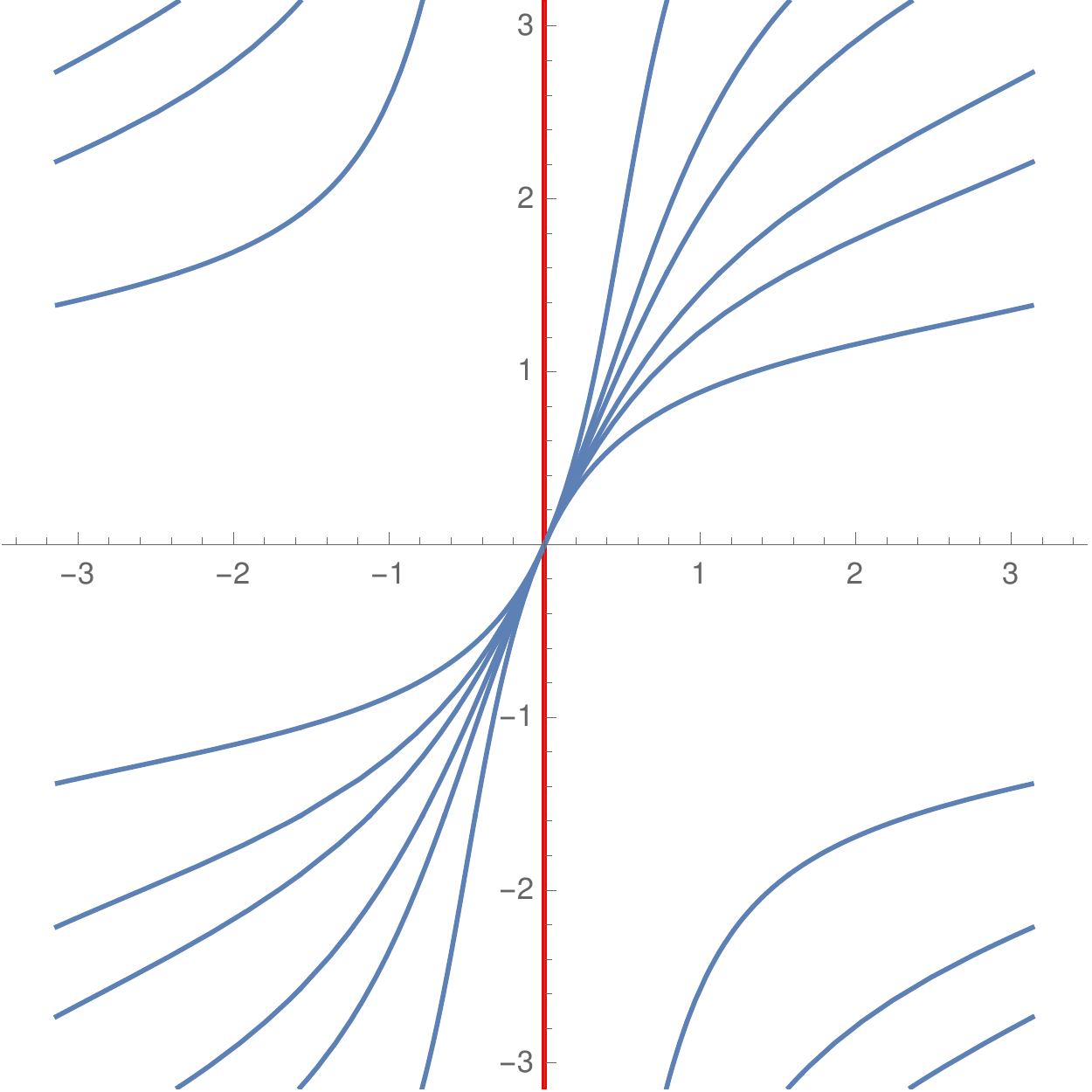}}
    \hfill
    \subfigure[$\Phi^n$ for $n=3$.]
      {\includegraphics[width=0.35 \textwidth]{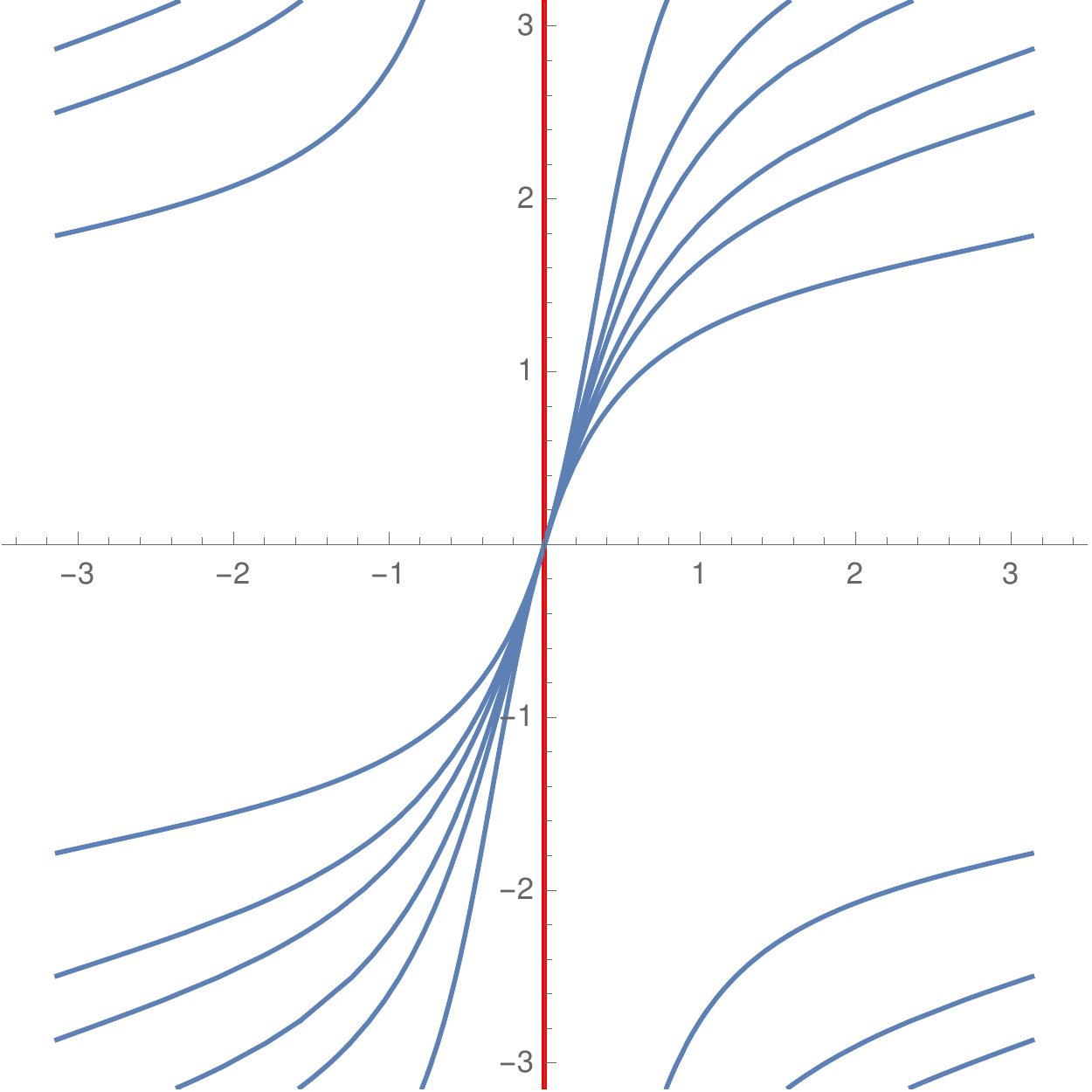}}
      \subfigure[$\Phi^n$ for $n=4$]
      {\includegraphics[width=0.35 \textwidth]{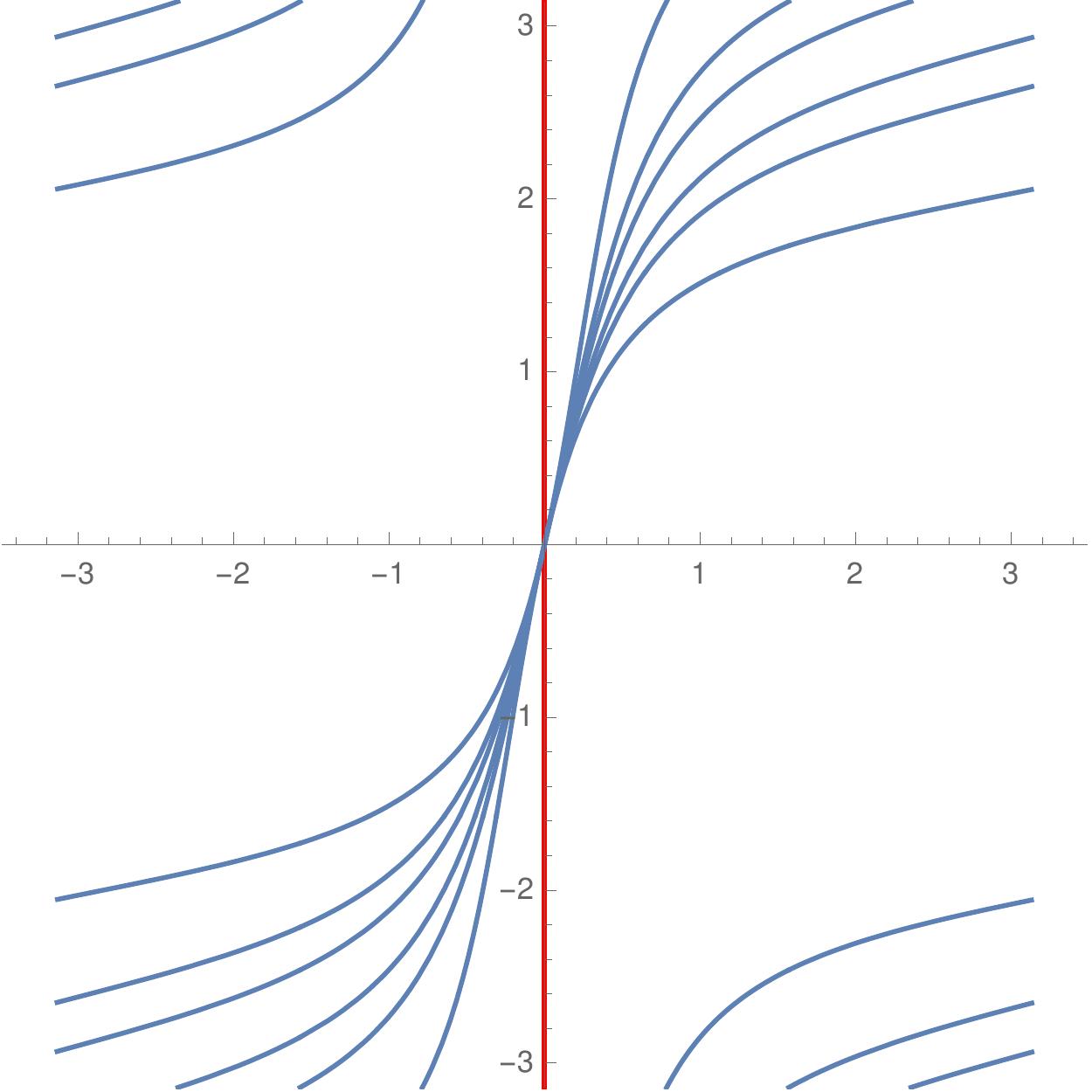}}
    \hfill
    \subfigure[$\Phi^n$ for $n=5$.]
      {\includegraphics[width=0.35 \textwidth]{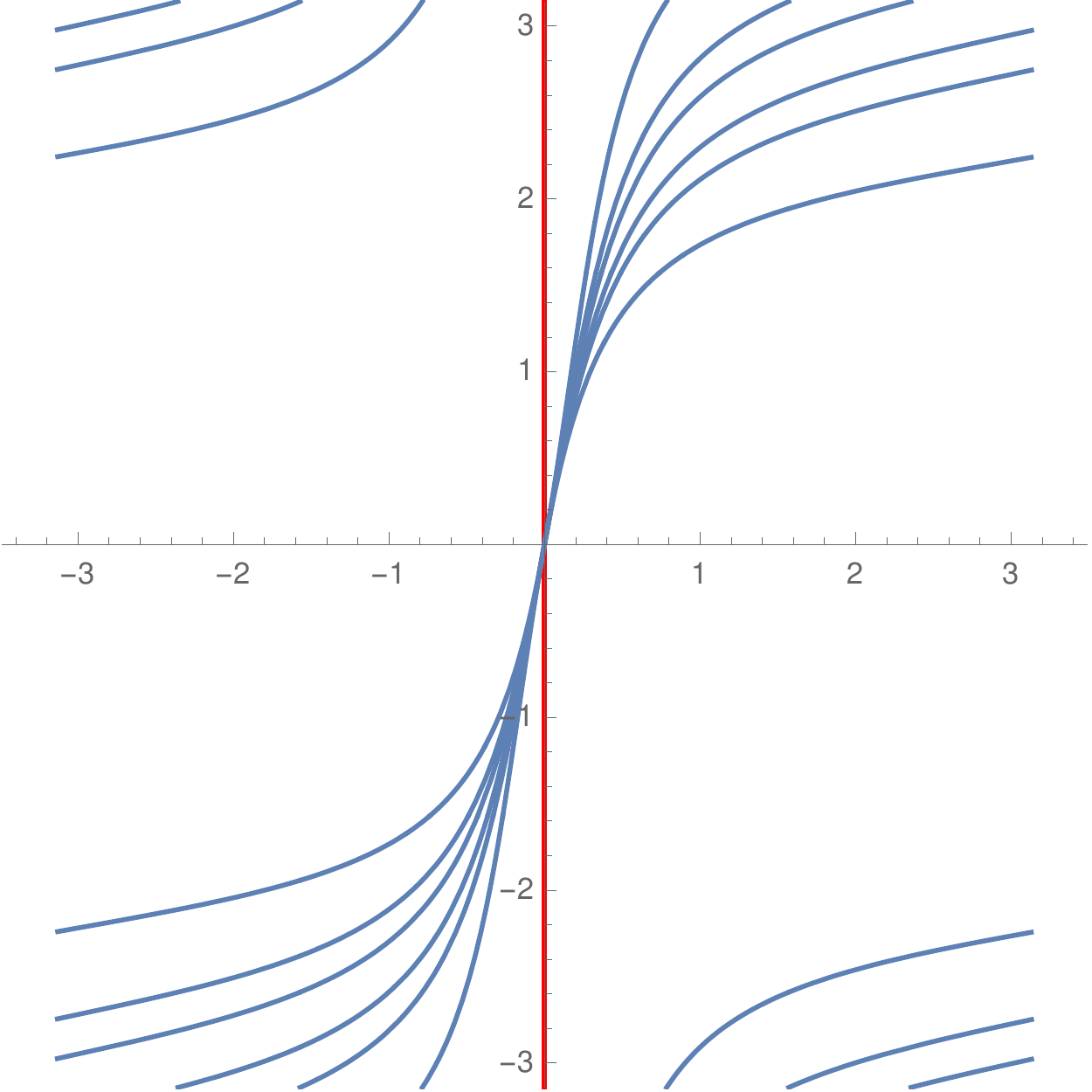}}
  \caption{\textsl{Iteration of $\Phi=(-\frac{3z_1z_2-z_1-z_2-1}{3-z_1-z_2-z_1z_2},z_2)$ on $\mathbb{T}^2$.}}
  \label{parabplots}
\end{figure}
Our next example is
\[\Phi(z_1,z_2)=\left(-\frac{3z_1z_2-z_1-z_2-1}{3-z_1-z_2-z_1z_2},z_2\right).\]
The first component map $\phi=-\frac{\tilde{p}}{p}$ again has a unique singularity on $\mathbb{T}^2$ at the point $(1,1)$, with $\phi^*(1,1)=1$, and since $\Phi(z_1,1)=(1,1)$, the fiber $\{z_2=1\}$ collapses.  The fixed points of $\Phi$ on the $2$-torus are determined by $\tilde{p}-z_1p=0$, and since $\tilde{p}-z_1p=-(z_1-1)^2(z_2+1)$, the fixed point consists of 
$\{z_1=1\}\cup\{z_2=-1\}$, a union of two lines.

Since $\phi(z_1,-1)=z_1$, we obtain $\Phi(z_1,-1)=(z_1,-1)$, confirming that all the points on $\{z_2=-1\}$ are fixed. By contrast, all other fibers contain a single fixed point at $(1,z_2)$. The dynamics of $\Phi$ are shown in Figure 2: we see that when $t_2>0$, points are attracted to the $t_2$-axis from the right, and when $t_2<0$, the fibers are attracted to the axis from the left.  A concrete formula for $\lim_{n\to \infty}\Phi$ is not readily apparent, but a computation shows that
\[\frac{\partial \phi}{\partial z_1}(z_1,z_2)=4\frac{(z_2-1)^2}{(3-z_1-z_2-z_1z_2)^2},\]
hence $\frac{\partial \phi}{\partial z_1}(1,z_2)=1$. As we will see later, this confirms that the fixed points on $\{(1,\zeta_2)\}$ are neither attractive nor repelling in this example: they are so-called parabolic fixed points.
\end{ex}
These two examples give an indication of the features (the presence of a collapsing fiber, fixed point curves being smooth, convergence to fixed points) present the general $(1,n)$ RISP setting. However, other phenomena do arise, and mixed behavior is possible, as the next example shows.

\begin{ex}\label{ex:rot}
The bidegree $(1,1)$ RISP
\[\Phi(z_1,z_2)=\left(\frac{3z_1z_2-z_1-z_2}{3-z_1-z_2},z_2\right)\]
has $\Phi(1,1)=(1,1)$ so that $(1,1)$ is again a fixed point, but in this example,
\[\phi_1(z_1) = \phi(z_1,1)=\frac{2z_1-1}{2-z_1},\]
which is non-constant, meaning that $\{z_2=1\}$ does not collapse into a point under $\Phi$. In fact, the component $\phi$ does not have a singularity at $(1,1)$ or, indeed, at any point of $\mathbb{T}^2$. Instead, the points $(1,1)$ and $(-1,1)$ are fixed by $\Phi$, and since 
$\phi_1'(1)=3$ and $\phi_1'(-1)=\frac{1}{3}$, the fixed point $(1,1)$ is repelling while $(-1,1)$ is attracting. 

We can determine all the fixed points of $\Phi$ on $\mathbb{T}^2$ by solving $\tilde{p}(z)-z_1p(z)=0$ for $z_2$. We obtain
\[z_2=\psi(z_1)=z_1\frac{4-z_1}{4z_1-1},\]
and we note that the right-hand side is unimodular for unimodular $z_1$, so that we again get a curve of fixed points in $\mathbb{T}^2$. However, $\psi$ is not surjective on $\mathbb{T}$, meaning that there are $z_2$-fibers without fixed points; see Figure \ref{rotaplots}(f). For instance, $-1$ is not in the range of $\psi$ on $\mathbb{T}$, and a computation reveals that $\phi(z_1,-1)=-\frac{4z_1-1}{4-z_1}$ satisfies $\phi^2=\mathrm{id}$. Thus, the fiber dynamics associated with $\Phi^n$ on $\{z_2=-1\}$ are those of a rational rotation of order $2$. This is clearly visible when looking at the top of the images in Figure \ref{rotaplots}. Other fibers that do not intersect the fixed point curve experience rotations of different orders.

\begin{figure}[h!]
    \subfigure[$\Phi^n$ for $n=1$.]
      {\includegraphics[width=0.35 \textwidth]{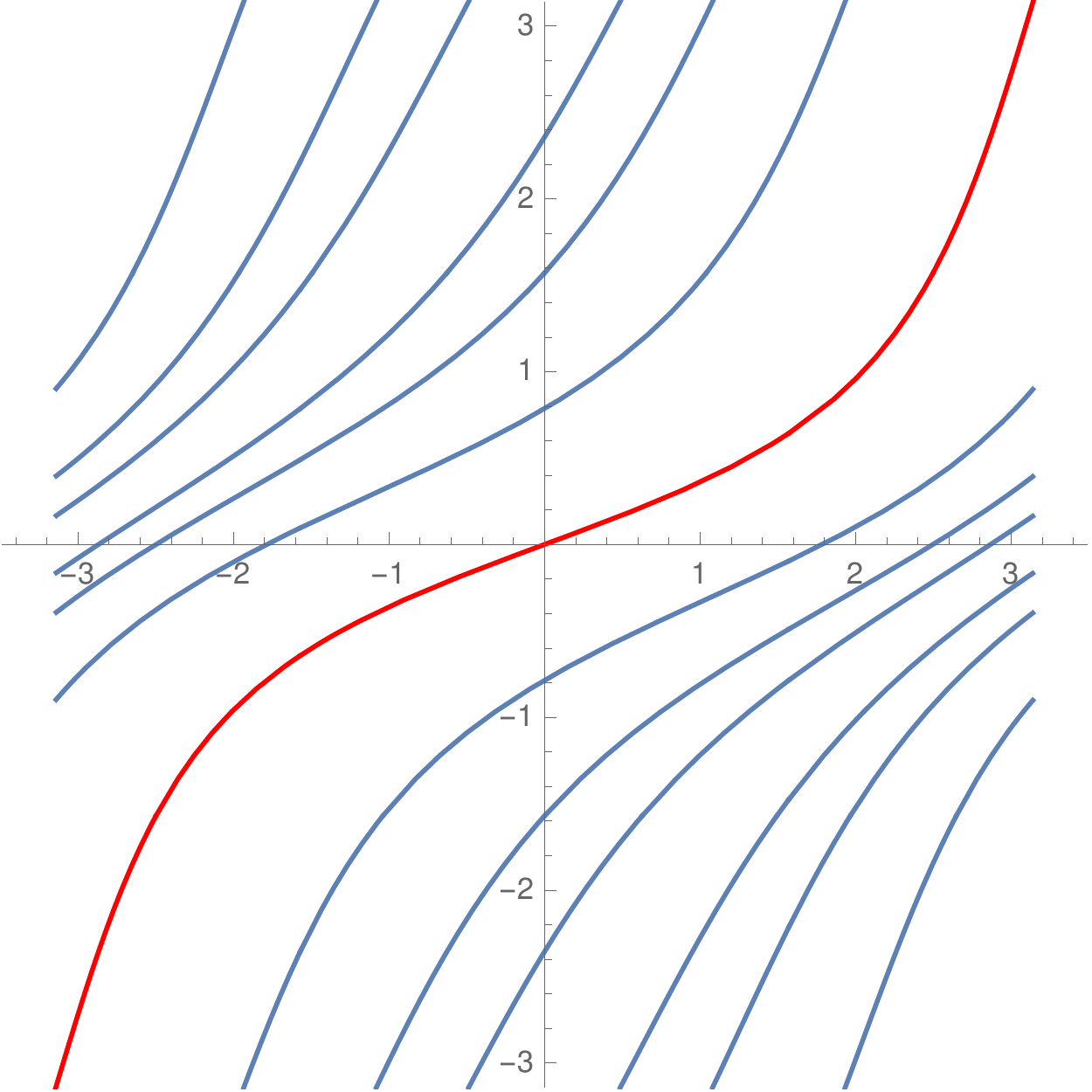}}
      \subfigure[$\Phi^n$ for $n=2$]
      {\includegraphics[width=0.35 \textwidth]{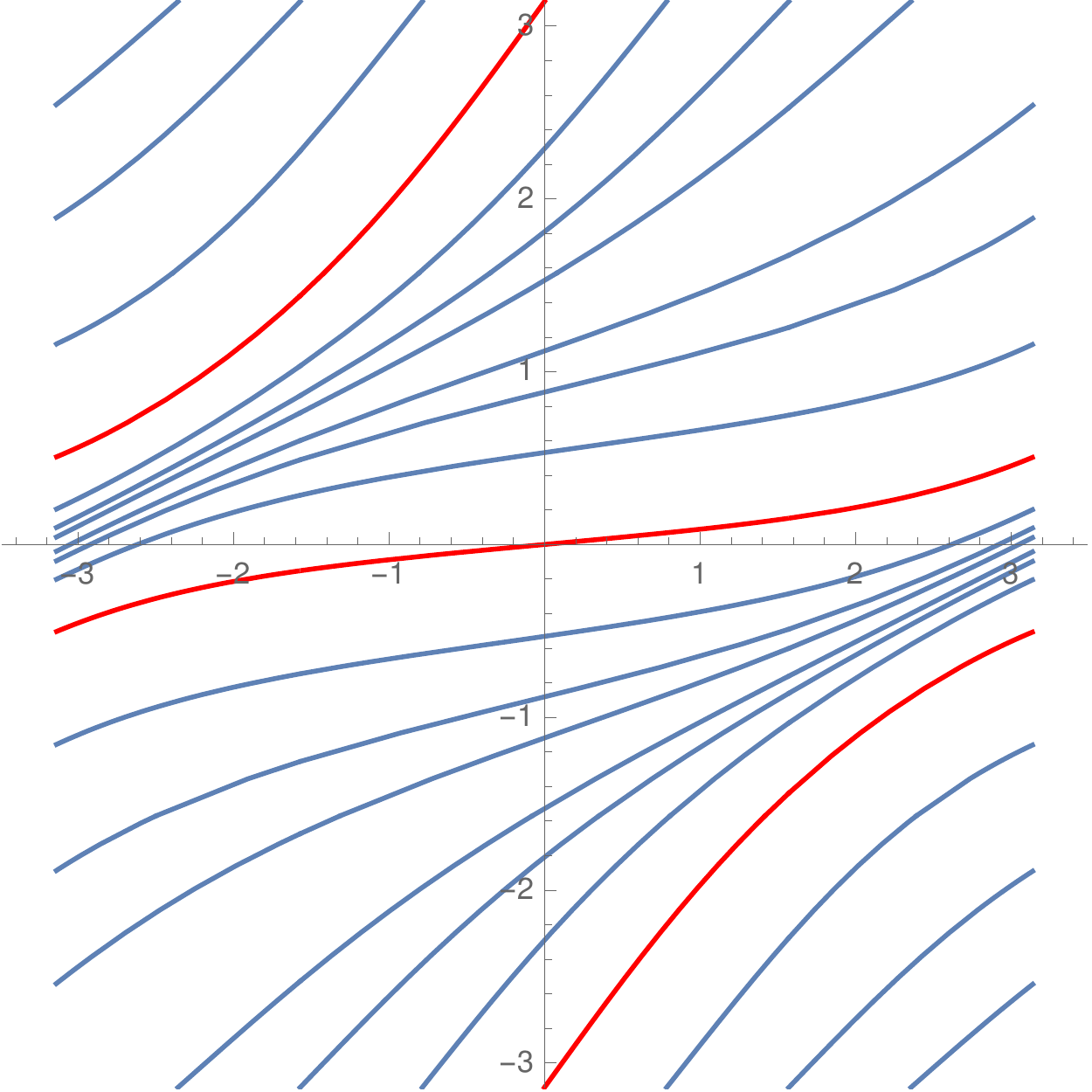}}
    \hfill
    \subfigure[$\Phi^n$ for $n=3$.]
      {\includegraphics[width=0.35 \textwidth]{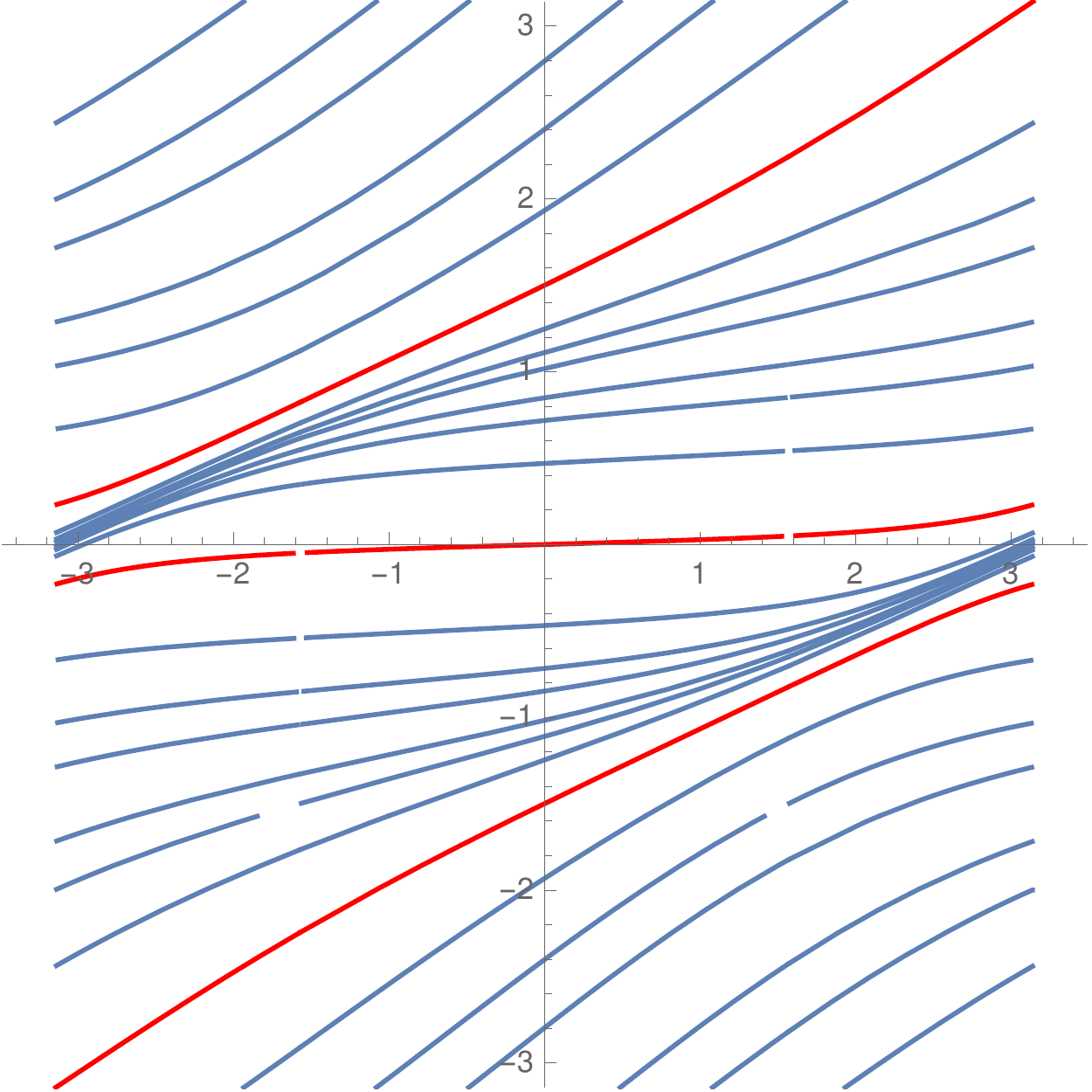}}
      \subfigure[$\Phi^n$ for $n=4$]
      {\includegraphics[width=0.35 \textwidth]{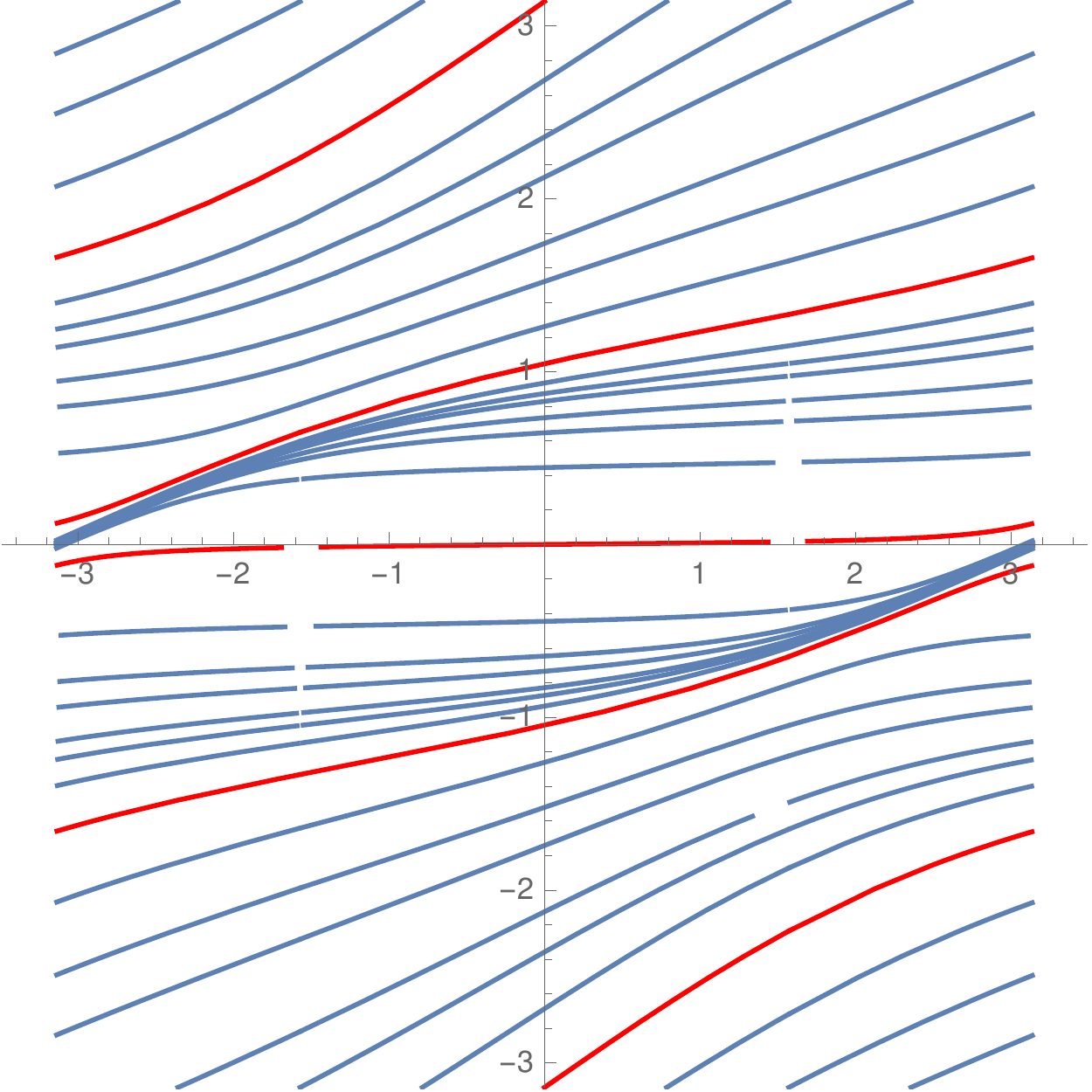}}
    \hfill
    \subfigure[$\Phi^n$ for $n=5$.]
      {\includegraphics[width=0.35 \textwidth]{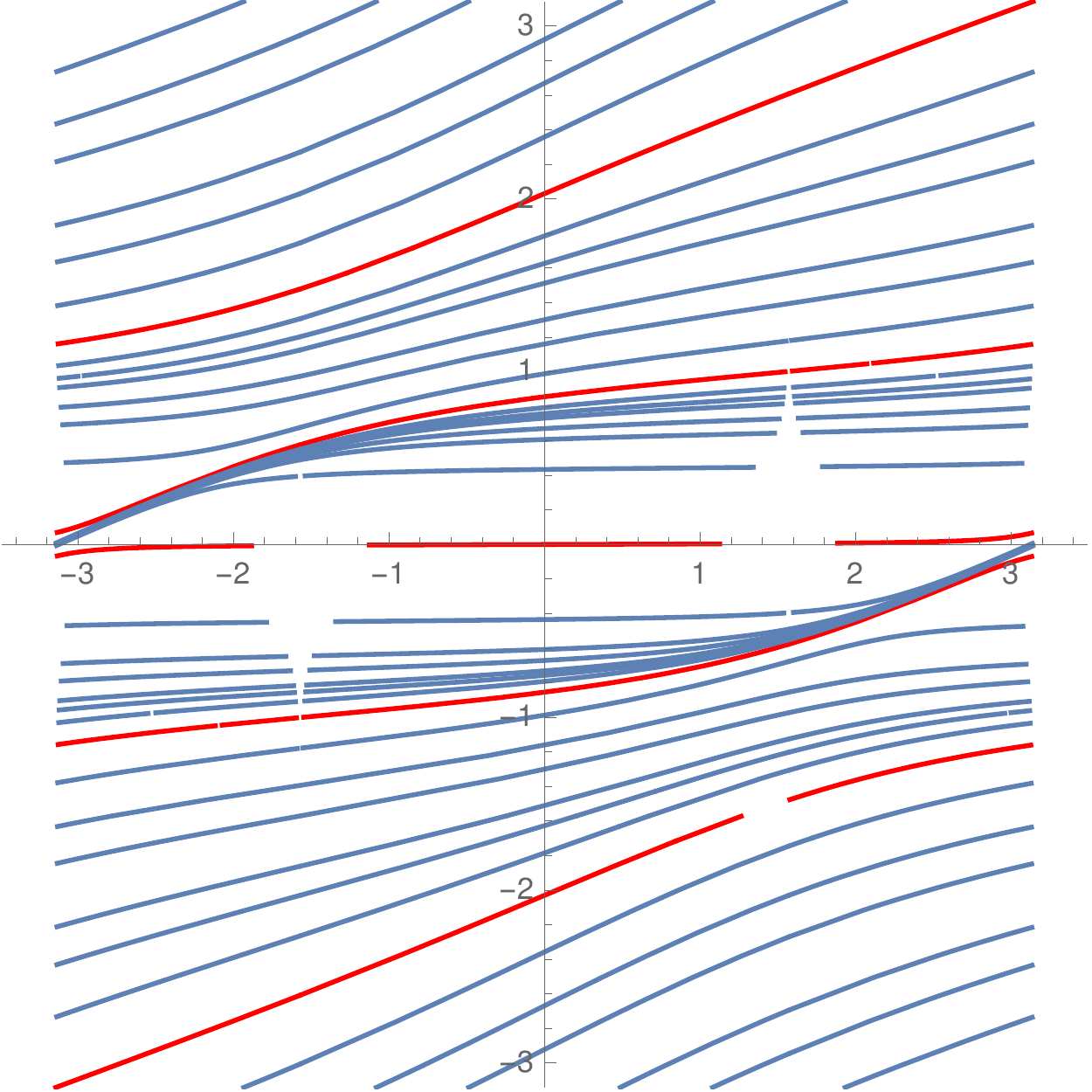}}
    \subfigure[Invariant curve.]
      {\includegraphics[width=0.35 \textwidth]{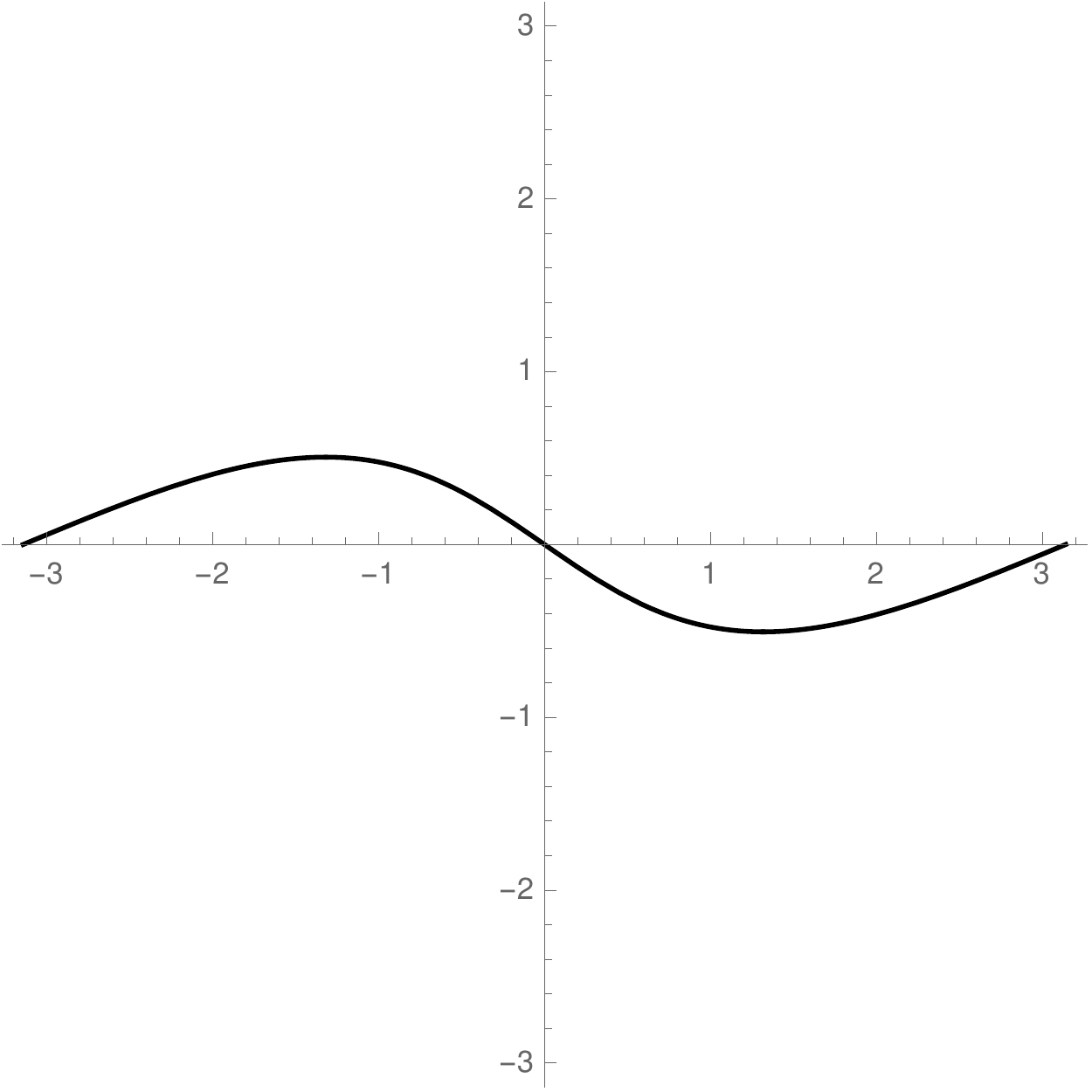}}
  \caption{\textsl{Iteration of $\Phi=(\frac{3z_1z_2-z_1-z_2}{3-z_1-z_2},z_2)$ on $\mathbb{T}^2$.}}
  \label{rotaplots}
\end{figure}
\end{ex}

This third example has some features in common with the first two, but the dynamics varies in nature across fibers. Together, our examples illustrate the three possible dynamical behaviors of a bidegree $(1,n)$ RISP -- attracting/repelling fixed points, so-called parabolic fixed points, and rotations on a $z_2$-fiber. In the investigation below, we explore how the actions on individual fibers fit together in a global picture of the dynamical behavior of a bidegree $(1,n)$ RISP on $\mathbb{T}^2$. 

\section{Preliminaries}\label{sec:Prelim}
\subsection{Rational inner functions on the bidisk}
 
Recall that a RIF is of the form
\[
\phi(z)=e^{i\alpha}z_1^{\beta_1}z_ 2^{\beta_2}\frac{\tilde{p}(z)}{p(z)}.
\] In what follows, we shall frequently assume that our RIFs are normalized. We take this to mean $\beta_1=\beta_2=0$, meaning that $$\phi=e^{i\alpha}\frac{\tilde{p}}{p}$$ for some polynomial $p$ with no zeros in $\mathbb{D}^2$. We also assume that $p$ is {\it atoral} in the sense of \cite{AMS06}, meaning that $p$ and $\tilde{p}$ have no common factors. (If $p$ is not initially atoral, then any toral factors can be canceled with the corresponding toral factors in $\tilde{p}$.)

Let us review some basic facts concerning rational inner functions on the bidisk. By the definition of the reflection operation, any $\tau \in\mathbb{T}^2$ that is a zero of $p$ is a zero of $\tilde{p}$. We say that $\tau\in\mathbb{T}^2$ is a {\it singular point} of $\phi$ if $p(\tau)=0$ 
(and hence also $\tilde{p}(\tau)=0$). Next, let us recall {\it B\'ezout's theorem} for $\mathbb{C}_{\infty}\times \mathbb{C}_{\infty}$, as discussed in \cite[Section 12]{Kne15}. Namely, if 
$P, Q\in \mathbb{C}[z_1,z_2]$ are polynomials with no common factors and with $\operatorname{deg}P=(M_1,N_1)$ and $\operatorname{deg} Q=(M_2,N_2)$, then $P$ and $Q$ have $M_1N_2+M_2N_1$ common zeros in $\mathbb{C}_{\infty}\times \mathbb{C}_{\infty}$. Applying this result to $P=p$ and $Q=\tilde{p}$ shows that any two-variable rational inner function has at most finitely many singularities on $\mathbb{T}^2$.

Since $\phi$ is a bounded analytic function, Fatou's theorem for polydisks guarantees the existence of non-tangential limits of $\phi$: that is,
\[\phi^*(\zeta_1,\zeta_2)=\angle\lim_{\mathbb{D}^2\ni (z_1,z_2)\to (\zeta_1,\zeta_2)}\phi(z_1,z_2)\]
exists at almost every $\zeta \in \mathbb{T}^2$. Here, $\angle\lim_{z\to \zeta}f(z)$ denotes letting $z$ tend to $\zeta \in \mathbb{T}^2$ with $|z_j-\zeta_j|<c(1-|z_j|)$, $j=1,2$, for some constant $c>1$.
 Knese proved  \cite[Corollary 14.6]{Kne15} that if $\phi$ is rational inner function, then the non-tangential limit $\phi^*(\zeta)$ exists and is unimodular at {\it every} point $\zeta \in \T^2$. However, if a normalized $\phi$ has singular points, then  $\phi^*(\zeta)$ is not typically continuous on $\mathbb{T}^2$. See \cite{AMY12,TD17, BPS18, BPS20} for more background material on RIFs.

\begin{definition}
Let $\Phi=(\phi_1, \phi_2)\colon \mathbb{D}^2\to \mathbb{D}^2$ be a non-constant normalized RIM. We say that $\tau\in \mathbb{T}^2$ is a {\it singular fixed point} (SF-point) of 
$\Phi$ if 
\begin{itemize}
\item $\tau \in\mathbb{T}^2$ is a singular point of at least one of $\phi_1$ and $\phi_2$;
\item $\tau$ is a {\it fixed point}: $\Phi^*(\tau)=\tau$.
\end{itemize}
\end{definition}
Note that, for each denominator $p$, we can make sure the corresponding component RIFs $\phi_j$ satisfy the condition $\phi^*_j(\tau)=\tau_j$ by a suitable choice of unimodular factor $e^{i\alpha_j}$. If $\phi_j$ possesses multiple singularities then we cannot typically normalize all of them to be SF-points simultaneously.

We record a few further facts about rational inner functions. Suppose $\phi$ is RIF of bidegree $(m,n)$. For a fixed $\zeta_2\in \mathbb{T}$, the function 
\[\phi_{\zeta_2}\colon z_1\mapsto \phi(z_1,\zeta_2)\] is a bounded rational function in the unit disk, attaining unimodular boundary values at every point of $\mathbb{T}$. Hence $\phi_{\zeta_2}$ is constant, a monomial, or a finite Blaschke product of degree $1\leq d\leq n$. Returning to the setting of a RISP $\Phi=(\phi, z_2)$, we note that if $\lambda$ is a constant, $\Phi$ maps the set
\begin{equation}
F_{\lambda}=\{(z_1,z_2)\in \overline{\mathbb{D}^2}\colon  z_2=\lambda\}
\label{fiber}
\end{equation}
into itself. The set $F_{\lambda}$ is referred to as a {\it fiber}. On each $F_{\lambda}$, the first component $\phi_{\lambda}$ is generically a finite Blaschke product of degree $n$ but on certain fibers the degree may drop. The most extreme examples are the following.
\begin{definition}
Let $\Phi=(\phi,z_2)$ be a normalized RISP, and let $\lambda \in\mathbb{T}$. 
We say that $F_{\lambda}$ is a {\it collapsing fiber} for $\Phi$ if the one-variable function $\phi_{\lambda}$ is constant. 
\end{definition}
 
We often specialize to bidegree $(1,n)$ RIFs. In that case, we write $\phi=e^{i\alpha}\frac{\tilde{p}}{p}$, where
\begin{equation}
p(z)=p_1(z_2)+z_1p_2(z_2), 
\label{def:p1p2polys}
\end{equation}
for some $p_1,p_2\in \mathbb{C}[z_2]$. Then as a consequence $\tilde{p}(z)=\tilde{p}_2(z_2)+z_1\tilde{p}_1(z_2)$, where each $\tilde{p}_j(z_2)=z_2^n\overline{p}(1/\bar{z}_2)$.  Note that $p_1(z_2)\neq 0$  for $z_2 \in \mathbb {D}$ since $p$ is assumed to have no zeros in the bidisk, and in fact $p_1(z_2)\neq 0$ for $z\in \overline{\mathbb{D}}$ by \cite[Lemma 10.1]{Kne15}. 
Finally, for $\alpha\in \mathbb{R}$, let us define the polynomial
\begin{equation}
Q_{\alpha}(z_2)=(p_1(z_2)-e^{i\alpha}\tilde{p}_1(z_2))^2+4e^{i\alpha}p_2(z_2)\tilde{p}_2(z_2)
\label{def:qapoly}
\end{equation}
which we shall later use to analyze the fixed points of the corresponding RISP $(\phi, z_2)$.
Note that if $\phi$ has bidegree $(1,n)$, then $Q_{\alpha}$ has degree at most $2n$, and that $\tilde{Q}_{\alpha}(z_2)=e^{-2i\alpha}Q_{\alpha}(z_2)$. 

When $\phi$ is a degree $(1,n)$ RIF and $\zeta_2\in \mathbb{T}$, the one-variable function $\phi_{\zeta_2}$ has $\deg(\phi_{\zeta_2})=1$, or is a constant and thus has a collapsing fiber $F_{\zeta_2}$. In this case, we can characterize collapsing fibers using a result from \cite{BCSprep}: reversing the roles of $z_1$ and $z_2$, \cite[Theorem 3.3]{BCSprep} yields the following.

\begin{lemma}\label{lem:collapsefiberlem}
Suppose $\Phi=(\phi,z_2)$ is a normalized bidegree $(1,n)$ RISP. Then $F_{\lambda}$ is a collapsing fiber if and only if $(\tau_1, \lambda)\in \mathbb{T}^2$ is a singularity of $\phi$ for some choice of $\tau_1$. If this is the case, then $\phi^*(\tau_1, \lambda)=\tau_1$ and $(\tau_1, \lambda)$ is an SF-point of $\Phi$. 
\end{lemma}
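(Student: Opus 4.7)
The plan is to analyze the one-variable restriction $\phi_\lambda(z_1) := \phi(z_1, \lambda)$ directly. Using the decomposition \eqref{def:p1p2polys} together with $\tilde p(z) = \tilde p_2(z_2) + z_1 \tilde p_1(z_2)$, this restriction is the M\"obius expression
\[\phi_\lambda(z_1) = e^{i\alpha}\frac{\tilde p_2(\lambda) + z_1 \tilde p_1(\lambda)}{p_1(\lambda) + z_1 p_2(\lambda)},\]
which degenerates to a constant precisely when the coefficient ``determinant'' $p_1(\lambda)\tilde p_1(\lambda) - p_2(\lambda)\tilde p_2(\lambda)$ vanishes. For $\lambda \in \mathbb{T}$, the reflection identity $\tilde p_j(\lambda) = \lambda^n \overline{p_j(\lambda)}$ turns this condition into $\lambda^n\bigl(|p_1(\lambda)|^2 - |p_2(\lambda)|^2\bigr) = 0$, so $F_\lambda$ is a collapsing fiber if and only if $|p_1(\lambda)| = |p_2(\lambda)|$.

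To match this with the singularity condition, I would observe that a point $(\tau_1, \lambda) \in \mathbb{T}^2$ is a singularity of $\phi$ exactly when $p_1(\lambda) + \tau_1 p_2(\lambda) = 0$ for some $\tau_1 \in \mathbb{T}$. By \cite[Lemma 10.1]{Kne15}, $p_1(\lambda) \neq 0$ on all of $\overline{\mathbb{D}}$, so such a $\tau_1$ can exist only when $p_2(\lambda) \neq 0$, and then $\tau_1 = -p_1(\lambda)/p_2(\lambda)$ is unimodular if and only if $|p_1(\lambda)| = |p_2(\lambda)|$. The two characterizations therefore reduce to the same algebraic relation, giving the first assertion.

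For the SF-point statement, when collapse occurs the common linear factor $(z_1 - \tau_1)$ divides both $p(z_1, \lambda)$ and $\tilde p(z_1, \lambda)$, and the reduced constant value is $c := e^{i\alpha}\tilde p_2(\lambda)/p_1(\lambda)$. By \cite[Corollary 14.6]{Kne15}, the NT limit $\phi^*(\tau_1, \lambda)$ exists as a unimodular number; tracing any NT approach from $\mathbb{D}^2$ (e.g.\ along $(r\tau_1, r\lambda)$ with $r \to 1^-$) and using the proportionality of numerator and denominator at $\lambda$ identifies this limit with $c$. The normalization convention described just after the definition of SF-point selects the unimodular factor $e^{i\alpha}$ so that $(\tau_1, \lambda)$ is a fixed point, i.e., $c = \tau_1$, making $(\tau_1, \lambda)$ an SF-point by definition.

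The most delicate step I would single out is the last identification $c = \tau_1$: it is not automatic from $\beta_1 = \beta_2 = 0$ alone and genuinely uses the freedom in $e^{i\alpha}$ from the paper's normalization discussion. If $\phi$ were to carry several collapsing fibers with distinct associated singularities, I would need to check that the required unimodular normalizations are mutually consistent, or else read the statement as a per-singularity one.
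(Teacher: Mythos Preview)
The paper does not supply its own argument here; it invokes \cite[Theorem~3.3]{BCSprep} with the roles of $z_1$ and $z_2$ reversed. Your approach is therefore a genuinely different, self-contained one. The equivalence between collapsing fibers and singularities is handled correctly: the determinant condition $p_1\tilde p_1 - p_2\tilde p_2 = 0$ together with the reflection identity $\tilde p_j(\lambda) = \lambda^n\overline{p_j(\lambda)}$ on $\mathbb{T}$ reduces both sides to $|p_1(\lambda)| = |p_2(\lambda)|$, which is more transparent than a bare citation and is exactly the computation the paper itself redoes inside the proof of Lemma~\ref{lem:Qvanishing}.

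For the SF-point clause, your reading is the right one: the identity $\phi^*(\tau_1,\lambda) = \tau_1$ is not forced by $\beta_1 = \beta_2 = 0$ alone but depends on the choice of $e^{i\alpha}$, in line with the paper's remark after the SF-point definition (and your caveat about multiple singularities is well taken). One step deserves more care, though. The claim that the NT limit at $(\tau_1,\lambda)$ equals the collapsing constant $c$ is not justified by ``proportionality at $\lambda$'' along the radial path $(r\tau_1, r\lambda)$, since that path never touches the fiber $z_2 = \lambda$ and both $p$ and $\tilde p$ vanish at the endpoint, so the limit is genuinely of $0/0$ type. A clean fix is to choose an NT path with $z_2 \to \lambda$ faster than $z_1 \to \tau_1$, say $z_1 = (1-\epsilon)\tau_1$ and $z_2 = (1-\epsilon^2)\lambda$; then $p(z_1,z_2) = -\epsilon\,\tau_1 p_2(\lambda) + O(\epsilon^2)$ and $\tilde p(z_1,z_2) = -\epsilon\,\tau_1\tilde p_1(\lambda) + O(\epsilon^2)$, so $\phi \to e^{i\alpha}\tilde p_1(\lambda)/p_2(\lambda) = c$, and Knese's path-independence of the NT limit finishes the job.
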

In other words, Lemma \ref{lem:collapsefiberlem} relates collapsing fibers of a RISP to the level sets of its first component map $\phi$. In this regard, two-variable rational inner functions exhibit better behavior than rational inner functions in $\mathbb{D}^n$ when $n\geq 3$. In \cite[Theorem 2.8]{BPS20}, it is shown that for each fixed $\lambda\in \mathbb{T}$, the unimodular level set 
\begin{equation}
\mathcal{C}_{\lambda}=\{(\zeta_1,\zeta_2)\in \mathbb{T}^2\colon \tilde{p}(\zeta_1,\zeta_2)-\lambda p(\zeta_1,\zeta_2)=0\}
\label{eq:levelcurve}
\end{equation}
consists of components that can be parametrized using analytic functions. By contrast, unimodular level sets in higher dimensions need not even be continuous \cite{BPSajm}.

\subsection{Dynamics of M\"obius transformations}\label{subsec:Mob}
Recall that the elements of the conformal automorphism group $\mathrm{Aut}(\mathbb{D})$ of the unit disk are {\it M\"obius transformations} of the form
\[\mathfrak{m}(z)=e^{i\alpha}\frac{a-z}{1-\overline{a}z},\]
where $\alpha \in \mathbb{T}$ and $a\in \mathbb{D}$. Note that each M\"obius transformation extends to the closed bidisk $\overline{\mathbb{D}}$ and in particular furnishes a smooth diffeomorphism of the unit circle $\mathbb{T}$.

 If $\phi$ is a bidegree $(1,n)$ rational inner function, then for each fixed $\lambda \in \mathbb{T}$, the analytic function $z_1\mapsto \phi(z_1,\lambda)$ is bounded, satisfies $|\phi(z_1,\lambda)|=1$, and is of degree $0$ or $1$. Then either $\phi(\cdot, \lambda)$ is a M\"obius transformation for some values of $\alpha(\lambda)$ and $a(\lambda)$, or $\phi(\cdot, \lambda)$ is constant. The latter possibility corresponds to a collapsing fiber, and by Lemma \ref{lem:collapsefiberlem}, this occurs if only if $(\zeta_1,\lambda)$ is a singularity of $\phi$ for some choice of $\zeta_1$.

In summary, for all but finitely many values 
\[\Lambda^{\flat}=\{\lambda^{\flat}_1,\ldots, \lambda^{\flat}_N\} \subset \mathbb{T},\] 
whose number is bounded by the degree of $\phi$ in $z_2$, the fiber map
\[z_1\mapsto \phi_{\lambda}(z_1)=\phi(z_1,\lambda)\] 
is a M\"obius transformation with parameters $\alpha(\lambda)$ and $a(\lambda)$. The functions $\alpha(\lambda)$ and $a(\lambda)$ are continuous in any open set that does not intersect $\Lambda^{\flat}$. Associated with $\Lambda^{\flat}$ is the set
\begin{equation}
\iota(\Lambda^{\flat})=\bigcup_{\lambda \in \Lambda^{\flat}}F_{\lambda};
\label{badfibers}
\end{equation}
note that this is a finite union of horizontal lines in $\mathbb{T}^2$.

Let us recall the well-known classification of M\"obius transformations depending on their fixed points. See \cite[Chapter 1]{BearBook} or \cite[Chapter 1]{MilBook} for a background discussion.
\begin{definition}
Suppose $\mathfrak{m}\in \mathrm{Aut}(\mathbb{D})\setminus\{\mathrm{id}\}$. Then one of the following holds:
\begin{itemize}
\item $\mathfrak{m}$ is {\it elliptic}: $\mathfrak{m}$ has one fixed point in $\mathbb{D}$, and no fixed points on $\mathbb{T}$
\item $\mathfrak{m}$ is {\it parabolic}: $\mathfrak{m}$ has no fixed points in $\mathbb{D}$ and one fixed point on $\mathbb{T}$
\item $\mathfrak{m}$ is {\it hyperbolic}: $\mathfrak{m}$ has no fixed points in $\mathbb{D}$ and two fixed points on $\mathbb{T}$. 
\end{itemize}
\end{definition}
Fixed points are classified according to the following scheme.
\begin{definition}
Suppose $t \in \overline{\mathbb{D}}$ is a fixed point of a M\"obius transformation $\mathfrak{m}$ and define the {\it multiplier} $\mathfrak{p}=\mathfrak{m}'(t)$. We say that 
\begin{itemize}
\item $t$ is {\it attracting} if $0< |\mathfrak{p}|<1$
\item $t$ is {\it repelling} if $|\mathfrak{p}|>1$
\item $t$ is {\it indifferent} if $|\mathfrak{p}|=1$. 
\end{itemize}
We further distinguish between {\it rationally indifferent fixed points} arising when $\mathfrak{p}$ is a root of unity, and {\it irrationally indifferent fixed points} corresponding to $\mathfrak{p}$ unimodular but not a root of unity.
\end{definition}
It is easy to give examples of M\"obius transformations with these properties. The rotation $\mathfrak{m}(z)=e^{i\alpha}z$ has a rationally or irrationally indifferent fixed point at $t=0$ according to whether $\alpha$ is rational or irrational. The map $\mathfrak{m}(z)=\frac{(2+i)z+i}{2-i-iz}$ has a single fixed point at $t=-1$, and is parabolic.
Finally, the hyperbolic map $\mathfrak{m}(z)=\frac{7z_1-1}{7-z_1}$ has one attracting fixed point at $t_1=-1$ and one attracting fixed point at $t_2=1$.

The dynamics of a M\"obius transformation can be described depending on its type in the above list. Recall that two analytic functions $f,g\colon \mathbb{D}\to \mathbb{D}$ are {\it conformally conjugate} if there exists a conformal map $\mathfrak{n}\colon \mathbb{D}\to \mathbb{D}$ such that \[g=\mathfrak{n}\circ f\circ \mathfrak{n}^{-1}.\]
See \cite[Chapter 2]{BearBook} or \cite[Chapters 8-9]{MilBook} for general background.
Note that conjugation preserves the multiplier of a M\"obius transformation at a fixed point, leading to the following classification.
\begin{proposition}\label{prop:conjfixdescr}
Let $\mathfrak{m} \in \mathrm{Aut}(\mathbb{D})\setminus\{\mathrm{id}\}$.  
Then the following holds:
\begin{itemize}
\item If $\mathfrak{m}$ is elliptic, then $\mathfrak{m}$ is conjugate to a rotation $z\mapsto e^{ia}z$ with angle $a=-i\log \mathfrak{p}$, where $\mathfrak{p}$ is the multiplier at the unique fixed point $t\in \mathbb{D}$.
\item If $\mathfrak{m}$ is parabolic then $\mathfrak{m}$ has multiplier equal to $1$ at its unique fixed point $t\in \mathbb{T}$.
\item If $\mathfrak{m}$ hyperbolic then $\mathfrak{m}$ has one attracting fixed point $t_1\in \mathbb{T}$ with associated multiplier $\mathfrak{p}\in \mathbb{D}$ and one repelling fixed point $t_2\in \mathbb{T}$ with multiplier $\frac{1}{\mathfrak{p}}$.
\end{itemize}
\end{proposition}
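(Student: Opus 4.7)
The strategy is conjugation to a normal form. The crucial observation is that if $g=\mathfrak{n}\circ\mathfrak{m}\circ\mathfrak{n}^{-1}$ for some $\mathfrak{n}\in\mathrm{Aut}(\mathbb{D})$ and $\mathfrak{m}(t)=t$, then $g$ fixes $\mathfrak{n}(t)$ and, by the chain rule, has the same multiplier there as $\mathfrak{m}$ does at $t$. This invariance holds at boundary fixed points as well, since any Möbius transformation extends holomorphically across $\mathbb{T}$ to a neighborhood of its fixed point on the Riemann sphere. Thus I may freely conjugate any specified fixed point to a convenient location (such as $0$ or $\infty$) without altering the multiplier data, and the classification reduces to identifying the resulting normal form in each of the three cases.

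For the elliptic case I would set $\mathfrak{n}(z)=(t-z)/(1-\bar t z)$ so that $\mathfrak{n}(t)=0$. Then $g=\mathfrak{n}\circ\mathfrak{m}\circ\mathfrak{n}^{-1}$ is a disk automorphism fixing $0$. Applying Schwarz's lemma to both $g$ and its inverse forces $g$ to be a rotation $z\mapsto e^{ia}z$. The multiplier at $0$ is $g'(0)=e^{ia}$, and conjugation-invariance gives $e^{ia}=\mathfrak{p}$, so $a=-i\log\mathfrak{p}$.

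For the parabolic and hyperbolic cases I would conjugate to the upper half-plane $\mathbb{H}$ via a Cayley transform $C_t(w)=i(t+w)/(t-w)$ that sends the chosen boundary fixed point $t$ to $\infty$. The resulting map $\widetilde{\mathfrak{m}}=C_t\circ\mathfrak{m}\circ C_t^{-1}$ is a real Möbius transformation of $\mathbb{H}$ fixing $\infty$, hence of the form $w\mapsto\lambda w+c$ with $\lambda>0$ and $c\in\mathbb{R}$. In the parabolic case the only fixed point is $\infty$, which forces $\lambda=1$ and $c\neq 0$; reading the multiplier back through $C_t$ yields $\mathfrak{p}=1$ at $t$. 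In the hyperbolic case a further translation moves the second fixed point to $0$, giving $w\mapsto\lambda w$ with $\lambda\neq 1$; the derivatives at $0$ and at $\infty$ (computed in the local chart $\eta=1/w$) are $\lambda$ and $1/\lambda$, which are therefore the two multipliers. After relabeling so that the attracting point is $t_1$, one obtains $\mathfrak{p}\in(0,1)\subset\mathbb{D}$ at $t_1$ and multiplier $1/\mathfrak{p}$ at $t_2$, exactly as stated. Along the way this also confirms that the boundary multipliers are real and positive, as one would expect from general boundary Schwarz arguments.

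The only genuine subtlety is the interpretation and conjugation-invariance of the multiplier at a fixed point lying on $\mathbb{T}$, or equivalently at $\infty\in\partial\mathbb{H}$. I would address this either by invoking the holomorphic extension to the Riemann sphere noted above (so the ordinary chain rule applies without modification), or by verifying directly via matrix representatives in $SU(1,1)$ that Möbius conjugation preserves eigenvalues, which coincide with multipliers at fixed points. Once that point is settled, each of the three items is a short calculation in the chosen normal form, and the fact that the two multipliers in the hyperbolic case are reciprocals is automatic from $\det=1$.
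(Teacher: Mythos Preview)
Your proposal is correct and is precisely the standard conjugation-to-normal-form argument one finds in the references the paper cites; indeed, the paper's own proof consists solely of the pointer ``See, for instance, \cite[Chapter 1]{BearBook} or \cite[Chapter 1]{MilBook},'' so you have supplied exactly the details those textbooks contain.
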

\begin{proof}
See, for instance, \cite[Chapter 1]{BearBook} or \cite[Chapter 1]{MilBook}.
\end{proof}
Proposition \ref{prop:conjfixdescr} gives us a complete description of the possible dynamics of a bidegree $(1,n)$ RISPs on each fiber $F_{\lambda}$. In the next sections, we investigate how these dynamical properties vary along fibers.

\subsection{Basic notions from complex dynamics}
Our main results can be formulated in terms of the dynamics of M\"obius transformations, as discussed in the preceding Subsection \ref{subsec:Mob}. We have included some comments referencing the {\it Julia set} of a rational map $R$ on the Riemann sphere, as well as some related notions. As these comments should be viewed as short asides, we do not include full definitions, and refer the reader to \cite{BearBook,CGBook,MilBook} for background on rational iteration.

\section{RISP dynamics on the boundary}\label{sec:main}
\subsection{General remarks on boundary fixed points}
Consider a bidegree $(n_1,n_2)$ rational inner mapping $\Phi=(\phi_1, \phi_2)$, where $\phi_j=e^{i\alpha_j}\frac{\tilde{p}_j}{p_j}$, $j=1,2$, and where as usual each $p_j \in \mathbb{C}[z_1,z_2]$ has no zeros in $\mathbb{D}$. The set of fixed points of $\Phi$ in $\cc{\mathbb{D}^2}$ consists of all $z \in \overline{\mathbb{D}^2}$ such that
\[\Phi(z_1,z_2)=(z_1,z_2);\]
recall that $\Phi$ is well-defined at each $\eta \in \mathbb{T}^2$ \cite[Corollary 14.6]{Kne15}. 

We first record some facts about fixed points in the general case. For $p_j$ and $\tilde{p}_j$ as above, we define the auxiliary polynomials
\begin{equation}
P_j(z)=e^{i\alpha_j}\tilde{p}_j(z)-z_1p_j(z), \quad j=1,2,
\label{eq:fppolydef}
\end{equation}
 and we set $\Gamma(\Phi)=\mathcal{Z}(P_1)\cap \mathcal{Z}(P_2)\cap \cc{\mathbb{D}^2}$.  Then $\Gamma(\Phi)$ is comprised of the fixed points of $\Phi$ along with all singular points of $\Phi$. As before, we adopt the convention that at least one of these is made into an SF-point by renormalizing the component maps $\phi_j$, $j=1,2$.  
Note that 
\[\operatorname{deg} P_1=(n_{1}+1, n_{2}) \quad  \textrm{and} \quad \operatorname{deg} P_2=(n_1,n_2+1).\] 

The polynomials $P_j$ belong to a special class of polynomials that is often singled out in function theory in polydisks. We follow \cite{AMS06} and use the following definition.
\begin{definition}
We say that $P\in \mathbb{C}[z_1,z_2]$ is {\it essentially $\mathbb{T}^2$-symmetric} if $\tilde{P}=e^{i\beta}P$ for some $\beta \in \mathbb{R}$.
\end{definition}
We then have the following.
\begin{lemma}
If $\Phi$ is a normalized RIM, then the associated polynomials $P_1,P_2$ defined in \eqref{eq:fppolydef} are essentially $\mathbb{T}^2$-symmetric.
\end{lemma}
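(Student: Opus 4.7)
The proof is a short algebraic verification relying on the fact that the reflection operation $q\mapsto\tilde q$ is involutive: $\tilde{\tilde p}_j=p_j$. Rearranging this identity yields
\[\overline{p_j(1/\bar z_1,1/\bar z_2)}=z_1^{-n_1}z_2^{-n_2}\tilde p_j(z),\qquad \overline{\tilde p_j(1/\bar z_1,1/\bar z_2)}=z_1^{-n_1}z_2^{-n_2}p_j(z),\]
where $(n_1,n_2)$ is the common bidegree of $p_j$ and $\tilde p_j$, and these are the two formulas I would feed into the computation of $\tilde P_j$.

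The plan is to apply the definition of the reflection to $P_j$ at the bidegree stated in the excerpt, namely $(n_1+1,n_2)$ for $j=1$ and $(n_1,n_2+1)$ for $j=2$, and then substitute the identities above in the two resulting terms (reading $z_1p_j$ in the second summand as $z_jp_j$, which is what the fixed-point equation $\phi_j(z)=z_j$ encodes and what makes the stated bidegrees come out correctly). The extra factor of $z_j$ appearing in the bidegree of $P_j$ combines cleanly with the $1/z_j$ produced by the reflection of $z_j p_j$; after cancelling monomials, one should obtain
\[\tilde P_j(z)=e^{-i\alpha_j}z_j p_j(z)-\tilde p_j(z)=-e^{-i\alpha_j}\bigl[e^{i\alpha_j}\tilde p_j(z)-z_j p_j(z)\bigr]=-e^{-i\alpha_j}P_j(z),\]
so essential $\mathbb{T}^2$-symmetry holds with $e^{i\beta_j}=-e^{-i\alpha_j}$, that is $\beta_j=\pi-\alpha_j$.

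The only obstacle in this argument is the bookkeeping of monomial degrees when invoking the definition of reflection for $P_j$, whose bidegree exceeds that of $p_j$ by one in a single variable; no conceptual input is required. In particular, the zero-free assumption on $p_j$ in $\mathbb{D}^2$ plays no role, so the lemma is a purely formal algebraic identity stemming from the Rudin--Stout normal form of the component maps $\phi_j$.
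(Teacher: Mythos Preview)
Your approach is correct and essentially identical to the paper's: both compute $\tilde P_j$ directly from the definition of reflection at the shifted bidegree, use the involutivity $\tilde{\tilde p}_j=p_j$ to simplify the two terms, and arrive at $\tilde P_j=-e^{-i\alpha_j}P_j$. Your parenthetical remark that the second summand should read $z_j p_j$ (not $z_1 p_j$) for $j=2$ is also well taken, as this is what the stated bidegree $\deg P_2=(n_1,n_2+1)$ and the fixed-point equation $\phi_2(z)=z_2$ require.
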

\begin{proof}
This amounts to a computation. Applying the reflection operation \eqref{eq:preflection} to the polynomial $P_1$ and exploiting the fact that reflection is an involution, we obtain
\begin{multline*}
\tilde{P}_1(z_1,z_2)=z_1^{n_1+1}z_2^{n_2}e^{-i\alpha_1}\cc{\tilde{p}}_1\left(\frac{1}{\cc{z}_1}, \frac{1}{\cc{z}_2}\right)-z_1^{n_1+1}z_2^{n_2}\frac{1}{z_1}\cc{p}_1\left(\frac{1}{\cc{z}_1}, \frac{1}{\cc{z}_2}\right)\\
=e^{-i\alpha_1}z_1p_1(z_1,z_2)-\tilde{p}_1(z_1,z_2)=-e^{-i\alpha_1}(-z_1p_1(z_1,z_2)+e^{i\alpha_1}\tilde{p}_1(z_1,z_2))\\=-e^{-i\alpha_1}P_1.
\end{multline*}

A similar computation establishes the assertion for $P_2$.
\end{proof}
Essentially $\mathbb{T}^2$-symmetric polynomials have been studied extensively, see for instance \cite{AMS06,Kne10,BPS20} and the references therein. In particular, such polynomials are related to determinantal representations and so-called distinguished varieties. We caution that the focus is often on essentially $\mathbb{T}^2$-symmetric polynomials with no zeros in the bidisk, while the polynomials $P_j$ that we encounter in this work typically {\it do} have some zeros in $\mathbb{D}^2$, namely at interior fixed points of $\Phi$. As a consequence, some of the known results from the literature (e.g. \cite{BKPSprep}) no longer apply.

In the pictures in Section \ref{sec:Ex}, we observed the presence of curves of fixed points in the $2$-torus. The fixed point set of a general RIM, however, typically consists of isolated points. This can be seen via a standard application of B\'ezout's theorem. (See \cite[Section 1.3]{SibSurv} for applications to counting the number of fixed and periodic points in the context of iteration of rational functions in $\mathbb{P}^k$.)
\begin{lemma}\label{lem:finitefixpts}
Suppose $\Phi=(\phi_1, \phi_2)$ is a RIM such that the associated polynomials $P_1$ and $P_2$ have no common factors.
Then $\Gamma(\Phi)$ is a finite set, with cardinality bounded by $2n_1n_2+n_1+n_2$.
\end{lemma}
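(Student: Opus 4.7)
The plan is a direct application of the version of B\'ezout's theorem on $\mathbb{C}_\infty \times \mathbb{C}_\infty$ recalled in the preliminaries, applied to $P_1$ and $P_2$. The coprimality hypothesis is precisely what is needed to invoke it, so the argument reduces to a bidegree count followed by a containment observation. First I would pin down the bidegrees: since $\phi_j$ is a bidegree $(n_1, n_2)$ RIF, both $p_j$ and $\tilde p_j$ have bidegree $(n_1, n_2)$, and multiplication by $z_j$ raises the degree in the $j$-th variable by one, giving $\operatorname{deg} P_1 = (n_1+1, n_2)$ and $\operatorname{deg} P_2 = (n_1, n_2+1)$, as already recorded before the lemma statement. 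Substituting $M_1 = n_1+1$, $N_1 = n_2$, $M_2 = n_1$, $N_2 = n_2+1$ into the formula $M_1 N_2 + M_2 N_1$ produces the count
\[
(n_1+1)(n_2+1) + n_1 n_2 = 2n_1 n_2 + n_1 + n_2 + 1
\]
for the total number of common zeros of $P_1, P_2$ in $\mathbb{C}_\infty \times \mathbb{C}_\infty$, counted with multiplicity.

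Since $\Gamma(\Phi) \subset \overline{\mathbb{D}^2} \subset \mathbb{C}^2$ by definition, and since the number of distinct affine intersection points is bounded by the total B\'ezout count, finiteness of $\Gamma(\Phi)$ is immediate. To sharpen the estimate to the stated bound $2n_1 n_2 + n_1 + n_2$, I would exhibit a forced common zero of $P_1$ and $P_2$ at infinity in $\mathbb{C}_\infty \times \mathbb{C}_\infty$: such a zero lies outside $\overline{\mathbb{D}^2}$ and lets us subtract one from the B\'ezout count. A natural candidate is to inspect the top bi-homogeneous parts of the $P_j$ along a line at infinity (say $z_1 = \infty$ or $z_2 = \infty$) and exploit the reflection symmetry that links leading coefficients of $p_j$ and $\tilde{p}_j$.

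The main obstacle is precisely this last refinement: off-the-shelf B\'ezout produces a bound one larger than claimed, so the real work is in forcing an intersection point at infinity. Everything else is routine bookkeeping once the bidegrees and B\'ezout are in hand.
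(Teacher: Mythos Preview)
Your approach is exactly the paper's: apply the bidegree B\'ezout theorem to $P_1$ and $P_2$ and then observe that $\Gamma(\Phi)\subset\overline{\mathbb{D}^2}$. However, you have actually been more careful than the paper. Your count
\[
(n_1+1)(n_2+1)+n_1n_2=2n_1n_2+n_1+n_2+1
\]
is correct; the paper's proof writes down the same left-hand side but equates it to $2n_1n_2+n_1+n_2$, which is simply an arithmetic slip. So the discrepancy you flagged is genuine: the raw B\'ezout count is one larger than the bound stated in the lemma, and the paper does not supply the extra argument you correctly identify as needed (a forced common zero at infinity, or otherwise). For the purposes of the paper---where only finiteness and a degree-controlled bound are used---one can either relax the bound in the lemma by $1$ or carry out your proposed refinement; the paper does neither and the stated sharper bound is not actually established there.
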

\begin{proof}
Lemma \ref{lem:finitefixpts} follows directly from B\'ezout's theorem as discussed in Section \ref{sec:Prelim}. Namely, since $P_1$ and $P_2$ have no common zeros, we have
\[\#(\mathcal{Z}(P_1)\cap \mathcal{Z}(P_2))=(n_1+1)\cdot(n_2+1) +n_2\cdot n_1=2n_1n_2+n_1+n_2.\]
This is an upper bound on the number of elements of $\Gamma(\Phi)$ since some common zeros could be located in $(\overline{\mathbb{D}^2})^c$.
\end{proof}
Note that the assumption that $P_1$ and $P_2$ have no common factor is important: if we set  $\phi_1$ and $\phi_2$ both equal to the RIF in Example \ref{ex:fave}, then $P_1$ and $P_2$ are both divisible by $z_1-z_2$, and the diagonal $\{z_1=z_2\}$ is fixed by the resulting mapping $\Phi$. 

We now focus on fixed points of a RISP $\Phi=(\phi,z_2)$, where $\phi=\frac{\tilde{p}}{p}$, and $p$ has no zeros in $\mathbb{D}^2$.  Note that for such functions, the polynomial $P_2$ in \eqref{eq:fppolydef} becomes identically 0, and so  Lemma \ref{lem:finitefixpts} does not apply. Instead, in this case, we get that $z \in \mathbb{T}^2$ satisfies the single condition 
\begin{equation}
q(z)-z_1p(z)=0,
\label{eq:singlefix}
\end{equation}
and we can expect to see curves of fixed points in $\mathbb{T}^2$. Our goal is to investigate the nature of these fixed point curves.

We begin with a simple consequence of work in \cite{BPS20}.
\begin{proposition}\label{prop:BPSconseq}
Let $\Phi=(z_1\phi, z_2)$, where $\phi$ is a normalized RIF. Then $\Gamma(\Phi)\cap \mathbb{T}^2$ is a union of curves, and each component of $\Gamma(\Phi)\cap \mathbb{T}^2$ can be parametrized by analytic functions.
\end{proposition}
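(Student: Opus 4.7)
The plan is to reduce this statement to the cited result \cite[Theorem 2.8]{BPS20} on analytic parametrizations of unimodular level sets of RIFs. The key observation is that because the second component of $\Phi$ is the identity map, the boundary fixed-point condition collapses onto a single equation in the first component, and that equation precisely cuts out a level set of $\phi$ on $\mathbb{T}^2$.

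First, I would make $\Gamma(\Phi) \cap \mathbb{T}^2$ explicit. For $\zeta = (\zeta_1,\zeta_2) \in \mathbb{T}^2$, the equation $\Phi(\zeta) = \zeta$ reads $\zeta_1 \phi^*(\zeta) = \zeta_1$ together with the tautology $\zeta_2 = \zeta_2$. Since $|\zeta_1|=1\neq 0$, the first equation is equivalent to $\phi^*(\zeta) = 1$. Writing the normalized form $\phi = e^{i\alpha}\tilde{p}/p$ and clearing denominators at non-singular points, this becomes
\[
\tilde{p}(\zeta)-e^{-i\alpha}p(\zeta)=0,
\]
a polynomial condition that remains valid at singular points of $\phi$, where $p$ and $\tilde{p}$ both vanish. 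Setting $\lambda = e^{-i\alpha} \in \mathbb{T}$, this identifies
\[
\Gamma(\Phi)\cap\mathbb{T}^2 \;=\; \mathcal{C}_\lambda
\]
with the unimodular level set from equation \eqref{eq:levelcurve}. Note that this identification subsumes the singular points of $\phi$, which lie in $\Gamma(\Phi)$ by definition and are automatically zeros of $\tilde{p}-\lambda p$.

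With this identification in hand, the conclusion follows directly from \cite[Theorem 2.8]{BPS20}: for each unimodular $\lambda$ the set $\mathcal{C}_\lambda$ decomposes into finitely many components, each admitting an analytic parametrization; in particular each component is a curve in $\mathbb{T}^2$. The only technical point to verify is that $\tilde{p}-\lambda p$ does not degenerate. Since $p$ is assumed atoral, $p$ and $\tilde{p}$ share no common factor, so the linear combination $\tilde{p}-\lambda p$ is a nonzero polynomial and $\mathcal{C}_\lambda$ is genuinely one-dimensional. The substantive work on analytic parametrization is carried entirely by \cite{BPS20}, so I do not anticipate any real obstacle; the proof is essentially a matter of translating the fixed-point equation into the language of level sets and invoking the existing result.
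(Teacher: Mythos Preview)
Your proposal is correct and follows essentially the same route as the paper: reduce the fixed-point condition to the single polynomial equation $e^{i\alpha}\tilde{p}-p=0$ (equivalently $\tilde{p}-e^{-i\alpha}p=0$) by using $\zeta_1\neq 0$ on $\mathbb{T}$, identify the result with a unimodular level set $\mathcal{C}_\lambda$ of $\phi$, and invoke \cite[Theorem 2.8]{BPS20}. Your added remarks on singular points and the atoral hypothesis are sound refinements of what the paper leaves implicit.
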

The subtlety here is that $\Phi$ may have singularities on $\mathbb{T}^2$, so that the implicit function theorem does not apply directly.
\begin{proof}
By assumption, the first coordinate of $\Phi$ can be written $$z_1 \phi = z_1e^{i\alpha_1}\frac{\tilde{p}(z_1,z_2)}{p(z_1,z_2)}$$ for some real $\alpha_1$ and some polynomial $p$ with no zeros on $\mathbb{D}^2$. Then, the fixed point condition \eqref{eq:singlefix} takes on the form
\[0=\eta_1e^{i\alpha}\tilde{p}(\eta)-\eta_1p(\eta)=\eta_1(e^{i\alpha}\tilde{p}(\eta)-p(\eta)).\]
Since $\eta_1\neq 0$ for $\eta_1\in \mathbb{T}$, we get that $\eta\in \mathbb{T}^2$ is a fixed point of $\Phi$ if and only if $e^{i\alpha}\tilde{p}(\eta)-p(\eta)=0$. This latter condition  means that $\eta$ belongs to some unimodular level set of the RIF $\phi$. Appealing to \cite[Theorem 2.8]{BPS20}, we arrive at the desired conclusion.
\end{proof}
In particular, Proposition \ref{prop:BPSconseq} implies that RISPs of the particular form $(z_1\phi, z_2)$ exhibit an abundance of fixed points in the $2$-torus. On each fiber, the function $z_1\mapsto z_1\phi_{\lambda}(z_1)$ is a Blaschke product of degree at least $1$, and for all but finitely many values of $\lambda$, the degree is at least $2$. This means that the dynamical behavior on a $\lambda$-fiber generically does not admit a description in elementary terms. 

For instance, $z_1\phi_{\lambda}(z_1)$ has a fixed point at $z_1=0$ for each choice of $\lambda$ and this fixed point is attracting whenever $z_1\phi_{\lambda}$ has degree at least $2$. Moreover since for generic $\lambda\in \mathbb{T}$, the numerator of $z_1\phi_{\lambda}(z_1)$ has higher degree than the denominator, the point at infinity of $\mathbb{C}_{\infty}$ is also an attracting fixed point. This implies \cite[Chapter 7]{MilBook} that for generic $\lambda$, the Julia set of the rational map $z_1\phi_{\lambda}(z_1)\colon \mathbb{C}_{\infty}\to \mathbb{C}_{\infty}$ is $\mathbb{T}$, the unit circle. We thus have complicated dynamical behavior on each fiber $F_{\lambda}$ and (see \cite[Chapter 6]{BearBook} or \cite[Chapter 4]{MilBook}), and for all but finitely many values of $\lambda\in \mathbb{T}$, the points in $F_{\lambda}\cap \Gamma(\phi)$ are repelling fixed points of the rational map $z_1\phi_{\lambda}(z_1)$. What Proposition \ref{prop:BPSconseq} asserts is that these repelling fixed points move in an analytic fashion in the fiber variable $\lambda$. 

The following example illustrates some of the facts listed above.
\begin{ex}
Consider the RISP
\[\Phi(z)=\left(-z_1\frac{2z_1z_2-z_1-z_2}{2-z_1-z_2}, z_2\right).\] 
As was noted above, the set $\{(0,z_2)\}\subset \cc{\mathbb{D}^2}$ consists of fixed points of $\Phi$. On the fiber $z_2=1$, $\Phi$ reduces to the identity, and on all other $F_{\lambda}$, the fiber map $z_1\phi_{\lambda}(z_1)$ has degree $2$ and has attracting fixed points at $0$ and $\infty$. 

Solving $-(2z_1z_2-z_1-z_2)=2-z_1-z_2$, we find that $\{(1, \lambda)\colon \lambda \in \mathbb{T}\setminus \{1\}\}\subset \mathbb{T}^2$ is fixed by $\Phi$, so that $1\in \mathbb{T}$ is a fixed point on each $F_{\lambda}$. A direct computation shows that $\frac{d}{dz_1}(-z_1\phi_{\lambda})(1)=3$ when $\lambda\neq 1$, confirming the presence of a repelling fixed point.

The dynamics of $\Phi$ are visualized in Figure \ref{fig:favejulia} and indeed have a more complicated appearance than the examples in Section \ref{sec:Ex}.

\begin{figure}[h!]
    \subfigure[Vertical lines before mapping.]
      {\includegraphics[width=0.4 \textwidth]{vertparacurves.pdf}}
    \hfill
    \subfigure[$\Phi^n$ for $n=1$.]
      {\includegraphics[width=0.4 \textwidth]{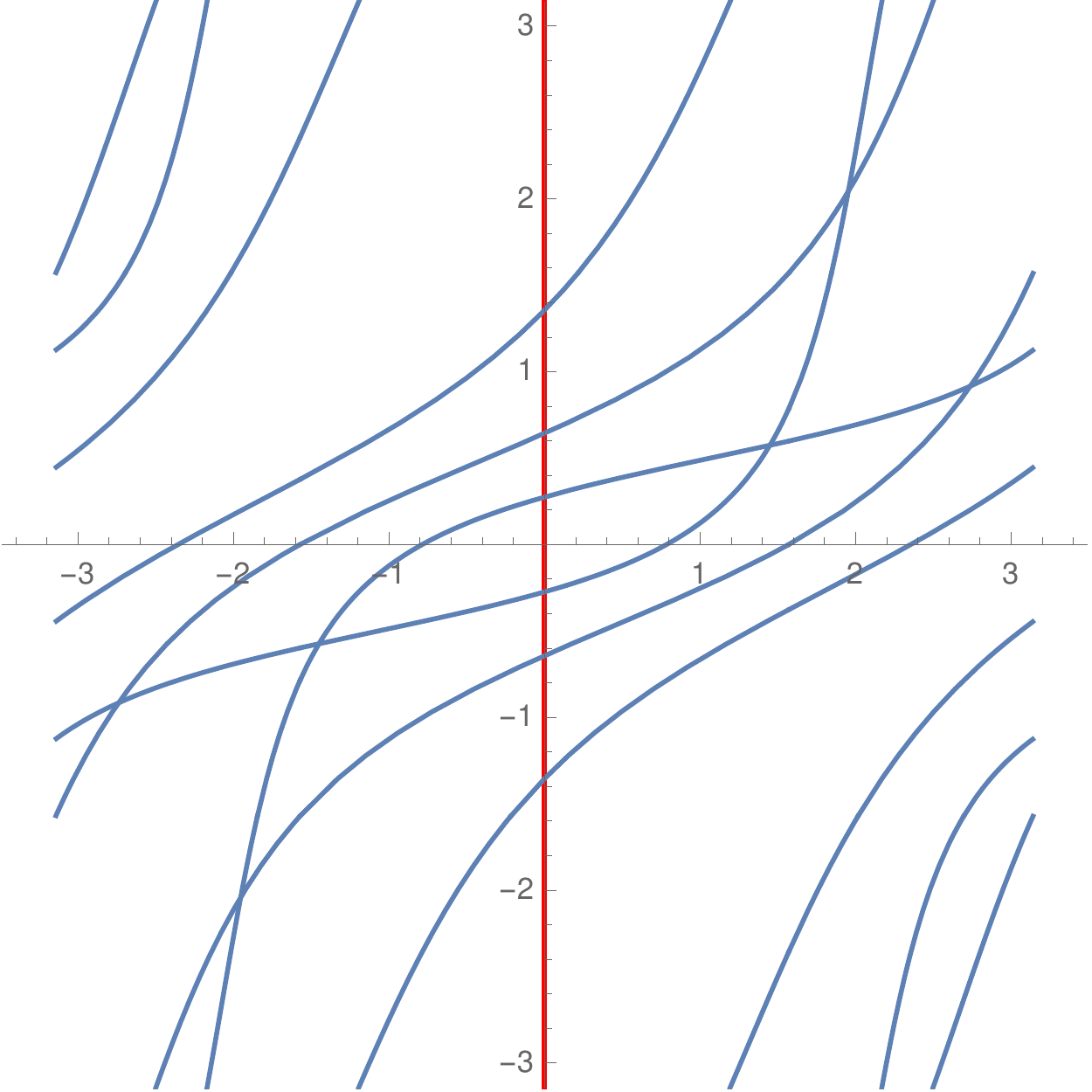}}
      \subfigure[$\Phi^n$ for $n=2$]
      {\includegraphics[width=0.4 \textwidth]{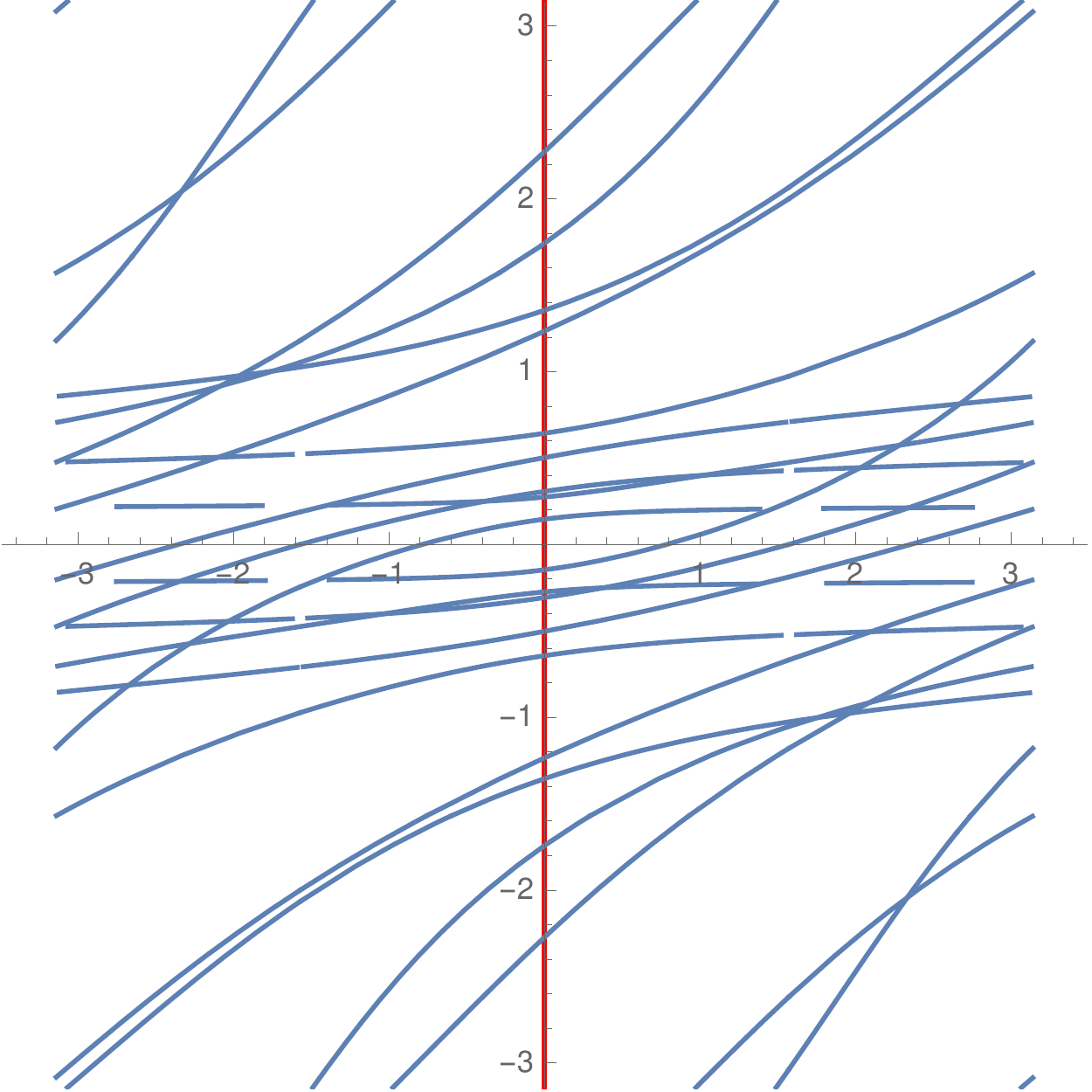}}
    \hfill
    \subfigure[$\Phi^n$ for $n=5$.]
      {\includegraphics[width=0.4 \textwidth]{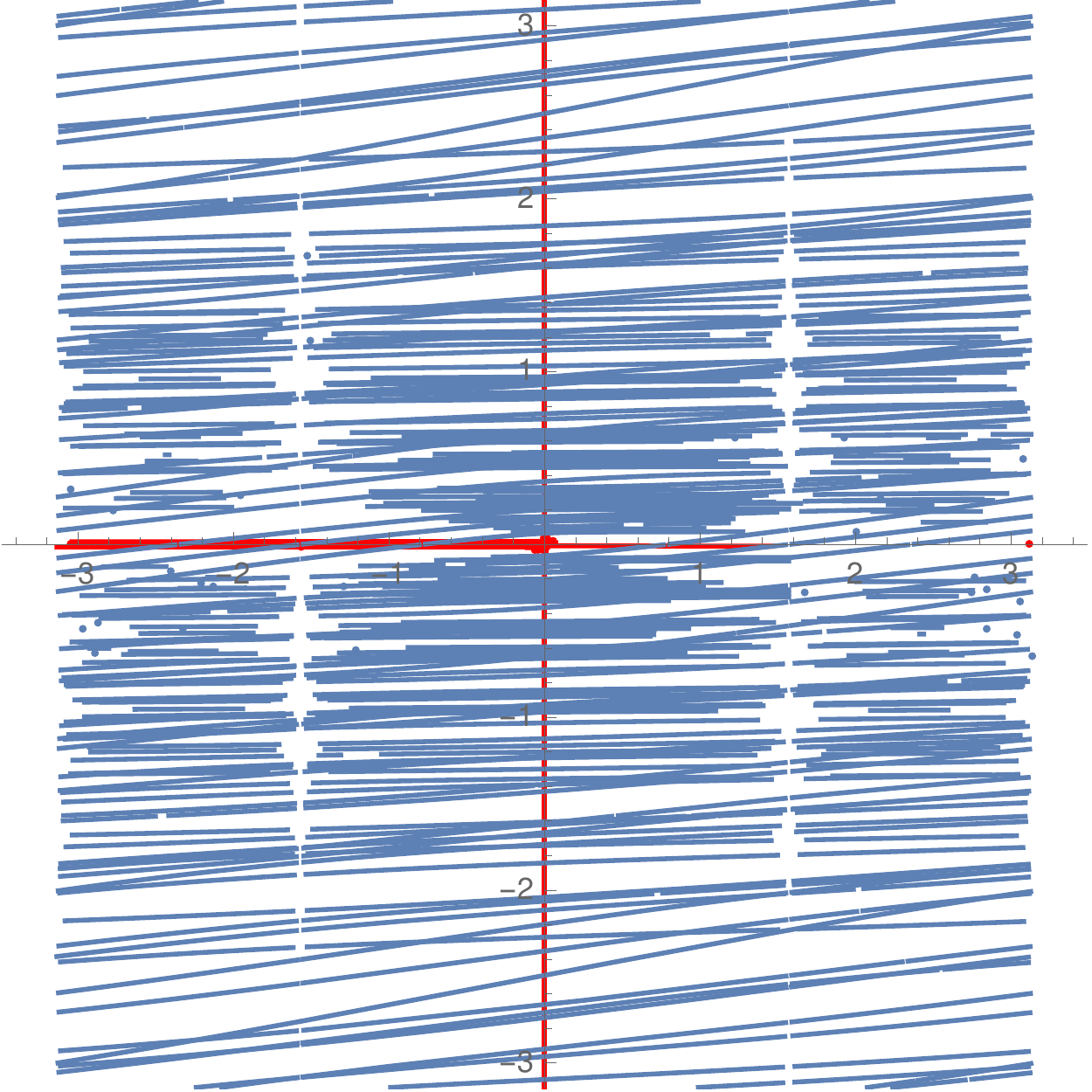}}
  \caption{\textsl{Iteration of $\Phi=(-z_1\frac{2z_1z_2-z_1-z_2}{2-z_1-z_2},z_2)$ on $\mathbb{T}^2$.}}
  \label{fig:favejulia}
  
\end{figure}
\end{ex}

\subsection{Rotation belts and parabolic fixed points for simple RISPs}
We return our focus to the simple case where $\Phi=(\phi, z_2)$ and $\phi$ is a bidegree $(1,n)$ normalized RIF, and where the fiber dynamics are completely described by Proposition \ref{prop:conjfixdescr}. In this case, since $p$ and $\tilde{p}$ are polynomials of degree $1$ in the first variable, \eqref{eq:singlefix} reduces to the quadratic equation
\[P(z_1,z_2)=e^{i\alpha}\tilde{p}(z_1,z_2)-z_1p(z_1,z_2)=0.\]
If $\Phi$ has a SF-point in $\mathbb{T}^2$ then $\mathcal{Z}(P)\cap \mathbb{T}^2\neq \emptyset$ but it may well happen that $\mathcal{Z}(P)\cap (F_{\lambda}\cap \mathbb{T}^2)=\emptyset$ for some values of $\lambda \in \mathbb{T}$. With this in mind, we make the following definition.
\begin{definition}\label{def:rotbelt}
Let $\lambda_1, \lambda_2\in [-\pi, \pi]$ and suppose $\lambda_1<\lambda_2$. We say that the set
\[B(\lambda_1, \lambda_2)=\mathbb{T}\times \{e^{it}\colon \lambda_1<t<\lambda_2 \}\subset \mathbb{T}^2\]
is a {\it rotation belt} for $\Phi=(\phi, z_2)$ if 
\begin{itemize}
\item no point belonging to $B(\lambda_1,\lambda_2)$ is a fixed point of $\Phi$
\item each of the sets $F_{\lambda_1}\cap \mathbb{T}^2$ and $F_{\lambda_2}\cap \mathbb{T}^2$ contains at least one fixed point of $\Phi$.
\end{itemize}
\end{definition}
The reason for our terminology here is that by Proposition \ref{prop:conjfixdescr}, each fiber map $\phi_{\lambda}$ associated with fibers contained in $B(\lambda_1,\lambda_2)$ is conjugate to a rotation. The second assumption is essentially a maximality condition. Examples \ref{ex:fave} and \ref{ex:para} illustrate that not all RISPs possess rotation belts. Example \ref{ex:rot} has a single rotation belt, but in general several rotation belts may be present. 

We now give a criterion for a degree $(1,n)$ RISP to possess parabolic fixed points in terms of the one-variable polynomial $Q_{\alpha}$. As in \eqref{def:p1p2polys} in Section \ref{sec:Prelim}, we write $\phi=e^{i\alpha}\frac{\tilde{p}}{p}$, where
\[p(z)=p_1(z_2)+z_1p_2(z_2), \]
and $\tilde{p}(z)=\tilde{p}_2(z_2)+z_1\tilde{p}_1(z_2)$ for a pair of one-variable polynomials $p_1,p_2$. Recall that $$Q_{\alpha}=(p_1-e^{i\alpha}\tilde{p}_1)^2+4e^{i\alpha}p_2\tilde{p}_2.$$
We shall need the auxiliary expressions
\begin{equation}
\psi^{1}_{\alpha}(z_2)=\frac{1}{2p_2(z_2)}\left(p_1(z_2)-e^{i\alpha}\tilde{p}_1(z_2)+\sqrt{Q_{\alpha}(z_2)}\right)
\label{psi1}
\end{equation}
and 
\begin{equation}
\psi^{2}_{\alpha}(z_2)=\frac{1}{2p_2(z_2)}\left(p_1(z_2)-e^{i\alpha}\tilde{p}_1(z_2)-\sqrt{Q_{\alpha}(z_2)}\right).
\label{psi2}
\end{equation}
The functions $\psi^j_{\alpha}$ parametrize the roots of the polynomial 
\[P_ {\alpha}(z)=e^{i\alpha}\tilde{p}-z_1p=e^{i\alpha}\tilde{p}_2+e^{i\alpha}z_1\tilde{p}_1-z_1p_1-z_1^2p_2\]
away from the set $\Lambda^{\sharp}=\{z_2\in \mathbb{C}\colon p_2(z_2)=0\}$. Note that $\Lambda^{\flat}\cap \Lambda^{\sharp}=\emptyset$ since $p_1(z_2)\neq 0$ for $z_2\in \overline{\mathbb{D}}$.

First, let us examine what happens on the set $\Lambda^{\sharp}$.
\begin{lemma}
Suppose $\lambda \in \mathcal{Z}(Q_{\alpha})\cap (\Lambda^{\sharp}\cap\mathbb{T})$. Then $\phi_{\lambda}(z_1)=e^{-i\alpha}z_1$.

Conversely, if $\phi_{\lambda}(z_1)=e^{-i\alpha}z_1$ for some $\lambda \in \mathbb{T}$, then $p_2(\lambda)=0$ and 
$\lambda \in \mathcal{Z}(Q_{\alpha})$.
\end{lemma}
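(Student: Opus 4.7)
The plan is to prove both directions by direct computation, exploiting the explicit form $\phi = e^{i\alpha}\tilde{p}/p$ together with the decompositions $p = p_1 + z_1 p_2$ and $\tilde{p} = \tilde{p}_2 + z_1\tilde{p}_1$. The proof is essentially a matter of substituting into the quadratic polynomial $P_{\alpha}(z_1,z_2) = e^{i\alpha}\tilde{p} - z_1 p$ and matching coefficients in $z_1$, together with one key application of the reflection identity on $\mathbb{T}$.

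For the forward direction, I would begin from the observation that on $\mathbb{T}$ the reflection $\tilde{p}_j$ satisfies $\tilde{p}_j(\lambda) = \lambda^{n}\overline{p_j(\lambda)}$, so the hypothesis $p_2(\lambda)=0$ immediately forces $\tilde{p}_2(\lambda)=0$. Substituting both vanishings into $\phi_\lambda(z_1)$ eliminates the constant terms in numerator and denominator, leaving
$$\phi_\lambda(z_1) = e^{i\alpha}\,\frac{z_1\tilde{p}_1(\lambda)}{p_1(\lambda)},$$
which is well-defined since $p_1$ has no zeros on $\overline{\mathbb{D}}$ by \cite[Lemma 10.1]{Kne15}. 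Next, the hypothesis $Q_\alpha(\lambda)=0$ combined with $p_2(\lambda)=0$ causes the cross-term $4e^{i\alpha}p_2\tilde{p}_2$ to drop out of $Q_\alpha(\lambda)$, so that the defining expression for $Q_\alpha$ collapses to $(p_1(\lambda)-e^{i\alpha}\tilde{p}_1(\lambda))^2 = 0$. This gives $p_1(\lambda)=e^{i\alpha}\tilde{p}_1(\lambda)$, and substituting into the displayed formula reduces the multiplier in front of $z_1$ to the claimed unimodular constant.

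For the converse, I would clear the denominator in the identity $\phi_\lambda(z_1)=cz_1$ (with $c$ the unimodular constant appearing in the lemma) to obtain an equality of polynomials in $z_1$ of degree at most $2$. Equating the coefficients of $z_1^0$, $z_1^1$, and $z_1^2$ yields three scalar relations: the $z_1^0$ equation gives $\tilde{p}_2(\lambda)=0$, the $z_1^2$ equation gives $p_2(\lambda)=0$ (so $\lambda\in\Lambda^{\sharp}$), and the $z_1^1$ equation encodes the proportionality between $p_1(\lambda)$ and $\tilde{p}_1(\lambda)$. Feeding $p_2(\lambda)=0$ together with the linear-term relation back into the definition \eqref{def:qapoly} makes both summands of $Q_\alpha(\lambda)$ vanish, delivering $Q_\alpha(\lambda)=0$.

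I do not anticipate any real obstacle here; the statement is essentially a coefficient-matching exercise made clean by the reflection identity on $\mathbb{T}$ and by the non-vanishing of $p_1$ on $\overline{\mathbb{D}}$. The only point requiring genuine attention is the bookkeeping of the $e^{i\alpha}$ factors, to make sure the multiplier of $z_1$ in $\phi_\lambda$ simplifies to precisely the constant in the statement and that no inadvertent division by $p_2(\lambda)=0$ takes place along the way.
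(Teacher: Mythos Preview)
Your proposal is correct and follows essentially the same route as the paper's proof: use the reflection identity to pass from $p_2(\lambda)=0$ to $\tilde{p}_2(\lambda)=0$, reduce $Q_\alpha(\lambda)=0$ to $(p_1-e^{i\alpha}\tilde{p}_1)^2(\lambda)=0$, and then read off the fiber map; for the converse, both you and the paper clear the denominator and compare coefficients. Your explicit separation into $z_1^0$, $z_1^1$, $z_1^2$ coefficients and your caveat about the $e^{i\alpha}$ bookkeeping are appropriate, as the paper's own proof is terse on exactly this point.
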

\begin{proof}
By the definition of the reflection operation, $p_2(\lambda)=0$ implies $\tilde{p}_2(\lambda)=0$. Next, if also $Q_{\alpha}(\lambda)=0$ then $(p_1(\lambda)-e^{i\alpha}\tilde{p}_1(\lambda))^2=0$ so that $p_1( \lambda)=e^{i\alpha}\tilde{p}_1(\lambda)$. 
But then
\[\phi(z_1,\lambda)=\frac{z_1\tilde{p}_1(\lambda)+0}{p_1(\lambda)+0}=e^{-i\alpha}z_1.\]

Conversely, suppose
\[e^{-i\alpha}z_1=\frac{z_1\tilde{p}_1(\lambda)+\tilde{p}_2(\lambda)}{p_1(\lambda)+p_2(\lambda)z_1}\]
for some unimodular $\lambda$. Then, comparing coefficients, we get $p_2(\lambda)=0$ and as a direct consequence that $\tilde{p}_2(\lambda)=0$. 
Finally, we read off that $e^{-i\alpha}=\frac{\tilde{p}_1(\lambda)}{p_1(\lambda)}$ and hence $Q_{\alpha}(\lambda)=0$, as claimed.
\end{proof}
Having seen what happens on the set $\Lambda^{\sharp}$, we now examine the behavior of $Q_{\alpha}$ on $\Lambda^{\flat}$.
\begin{lemma}\label{lem:Qvanishing}
Suppose $\lambda \in \Lambda^{\flat}$. Then 
\begin{enumerate}
\item $Q_{\alpha}(\lambda)=0$ if and only if $p_1(\lambda)+e^{i\alpha}\tilde{p}_1(\lambda)=0$;
\item if $Q_{\alpha}(\lambda)=0$ then $Q_{\alpha}$ vanishes to at least order $2$ at $\lambda$.
\end{enumerate}
\end{lemma}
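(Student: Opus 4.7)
The plan is to expose a useful algebraic decomposition of $Q_\alpha$ and then reduce both parts of the lemma to a single analytic observation. Using $(p_1 - e^{i\alpha}\tilde{p}_1)^2 = (p_1 + e^{i\alpha}\tilde{p}_1)^2 - 4 e^{i\alpha}p_1\tilde{p}_1$, one can rewrite
\[ Q_{\alpha}(z_2) = E(z_2)^2 - 4 e^{i\alpha}B(z_2), \]
where $E := p_1 + e^{i\alpha}\tilde{p}_1$ and $B := p_1\tilde{p}_1 - p_2\tilde{p}_2$. Part (1) should fall out once we know $B(\lambda) = 0$ on $\Lambda^{\flat}$, while part (2) will require additionally that $B'(\lambda) = 0$.

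For $B(\lambda) = 0$: the collapsing condition means that the numerator $\tilde{p}_2(\lambda) + z_1\tilde{p}_1(\lambda)$ and denominator $p_1(\lambda) + z_1 p_2(\lambda)$ of $\phi(\cdot, \lambda)$ are proportional as polynomials in $z_1$. In both subcases (denominator of degree $1$ or $0$ at $\lambda$), proportionality collapses to $p_1(\lambda)\tilde{p}_1(\lambda) - p_2(\lambda)\tilde{p}_2(\lambda) = 0$, that is, $B(\lambda) = 0$. With this in hand, the decomposition gives $Q_{\alpha}(\lambda) = E(\lambda)^2$ for $\lambda \in \Lambda^{\flat}$, and part (1) is immediate.

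For part (2), if $Q_\alpha(\lambda)=0$ then part (1) yields $E(\lambda)=0$, so differentiating the decomposition produces
\[ Q_{\alpha}'(\lambda) = 2E(\lambda)E'(\lambda) - 4e^{i\alpha}B'(\lambda) = -4e^{i\alpha}B'(\lambda). \]
Thus the claim reduces to showing $B'(\lambda) = 0$ on $\Lambda^{\flat}$. The key step is to restrict $B$ to the circle: since $\tilde{p}_j(\zeta) = \zeta^n\overline{p_j(\zeta)}$ for $\zeta \in \mathbb{T}$,
\[ B(\zeta) = \zeta^n\bigl(|p_1(\zeta)|^2 - |p_2(\zeta)|^2\bigr), \qquad \zeta \in \mathbb{T}. \]
Because $p = p_1 + z_1 p_2$ has no zeros in $\mathbb{D}^2$ and $p_1$ is non-vanishing on $\overline{\mathbb{D}}$, the ratio $p_2/p_1$ is a bounded holomorphic function on $\mathbb{D}$ with $|p_2/p_1| \leq 1$ (otherwise $z_1 = -p_1/p_2$ would give an interior zero of $p$), extending by continuity to $|p_2(\zeta)|\leq|p_1(\zeta)|$ on $\mathbb{T}$. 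Hence the smooth real function $f(t) := |p_1(e^{it})|^2 - |p_2(e^{it})|^2$ is nonnegative, and $f(t_0) = 0$ whenever $\lambda = e^{it_0}\in\Lambda^{\flat}$. A nonnegative smooth function vanishing at $t_0$ has $f'(t_0) = 0$. Differentiating $B(e^{it}) = e^{int}f(t)$ and using $f(t_0) = f'(t_0) = 0$ gives $\tfrac{d}{dt}B(e^{it})\big|_{t_0}=0$, and by the chain rule $B'(\lambda) = 0$, completing part (2).

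The main obstacle will be converting the algebraic identity $B(\lambda) = 0$ into the stronger derivative statement $B'(\lambda) = 0$; the Schwarz-lemma-style bound $|p_2|\leq |p_1|$ on $\mathbb{T}$ is what does the job, forcing $B$ restricted to $\mathbb{T}$ to agree (up to a unimodular factor) with a nonnegative smooth function and thus to have at least double zeros. The rest of the argument is just bookkeeping with the discriminant-type decomposition of $Q_\alpha$.
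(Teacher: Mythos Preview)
Your proof is correct and follows essentially the same route as the paper: both use the identity $Q_\alpha = (p_1+e^{i\alpha}\tilde{p}_1)^2 + 4e^{i\alpha}(p_2\tilde{p}_2 - p_1\tilde{p}_1)$, establish $p_1\tilde{p}_1 - p_2\tilde{p}_2 = 0$ at $\lambda\in\Lambda^\flat$ from the collapsing condition, and exploit the inequality $|p_2|\le|p_1|$ on $\mathbb{T}$ to see that $B|_{\mathbb{T}}$ is $\zeta^n$ times a nonnegative quantity. The one difference is in the final step of part~(2): the paper invokes the Fej\'er--Riesz theorem to factor the nonnegative trigonometric polynomial $|p_1|^2-|p_2|^2$ as $|r|^2$ and deduces a double root of $B$ at $\lambda$ from $r(\lambda)=\tilde r(\lambda)=0$, whereas you use the more elementary calculus observation that a nonnegative smooth function has vanishing first derivative at a zero, then transfer this to $B'(\lambda)=0$ via the chain rule. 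Your variant avoids an external theorem at the cost of showing only first-order vanishing of $B'$ (which is exactly what is needed); the paper's factorization in principle yields more structural information but is not exploited further.
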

\begin{proof}
We first prove (1). By completing the square, we can rewrite $Q_{\alpha}$ as
\[Q_{\alpha}(z_2)=(p_1(z_2)+e^{i\alpha}\tilde{p}_1(z_2))^2+4e^{i\alpha}(p_2(z_2)\tilde{p}_2(z_2)-p_1(z_2)\tilde{p}_1(z_2)).\]
Since $\lambda \in \Lambda^{\flat}$, Lemma \ref{lem:collapsefiberlem} implies that $p(\tau_1,\lambda)=\tilde{p}(\tau_1,\lambda)=0$ for some $\tau_1\in \mathbb{T}$. Solving for $\tau_1$, we deduce that $\frac{p_1(\lambda)}{p_2(\lambda)}=\frac{\tilde{p}_2(\lambda)}{\tilde{p}_1(\lambda)}$ and then $p_1(\lambda)\tilde{p}_1(\lambda)-p_2(\lambda)\tilde{p}_2(\lambda)=0$.

Hence, $Q_{\alpha}(\lambda)=(p_1(\lambda)+e^{i\alpha}\tilde{p}_1(\lambda))^2$ leading to the desired condition.

We turn to (2). Clearly, any zero of $(p_1+e^{i\alpha}\tilde{p}_1)^2$ occurs with even multiplicity. Next, we note that $\left|\frac{p_2(z_2)}{p_1(z_2) }\right|^2\leq 1$ for $z_2\in \mathbb{D}$, for otherwise $p$ would have a zero in $\mathbb{D}^2$. Now, we note that
\[p_2(e^{it_2})\tilde{p}_2(e^{it_2})-p_1(e^{it_2})\tilde{p}_1(e^{it_2})=e^{int_2}\left(|p_2(e^{it_2})|^2-|p_1(e^{it_2})|^2\right)\]
and since the expression on the right is a non-positive trigonometric polynomial, the Fej\'er-Riesz theorem implies that it is equal to $-|r(e^{it_2})|^2$ for some polynomial $r$, with a zero at $\lambda$ by the proof of (1).
\end{proof}
We shall see later, in Example  \ref{ex:multisingex}, that it may well happen that $Q_{\alpha}(\lambda)\neq 0$ for certain $\lambda \in \Lambda^{\flat}$.

\begin{theorem}\label{thm:paracrit}
Suppose $Q_{\alpha}(\lambda)=0$ for some $\lambda \in \mathbb{T}\setminus (\Lambda^{\flat}\cup \Lambda^{\sharp})$. Then $\psi^{1}(\lambda)$ and $\psi^{2}(\lambda)$ coincide as an element of $\mathbb{T}$, and $\phi_{\lambda}$ has a parabolic fixed point at $\psi^1_{\alpha}(\lambda)$. 
\end{theorem}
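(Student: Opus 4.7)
The plan is to interpret $\psi^1_{\alpha}(\lambda)$ and $\psi^2_{\alpha}(\lambda)$ as the two roots, counted with multiplicity, of the quadratic polynomial
\[P_{\alpha}(z_1,\lambda) = -p_2(\lambda)z_1^2 + \bigl(e^{i\alpha}\tilde{p}_1(\lambda)-p_1(\lambda)\bigr)z_1 + e^{i\alpha}\tilde{p}_2(\lambda)\]
viewed as a polynomial in $z_1$, and then to combine this with the classification of M\"obius transformations furnished by Proposition \ref{prop:conjfixdescr}.

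First I would observe that since $\lambda \notin \Lambda^{\sharp}$, we have $p_2(\lambda) \neq 0$, so both $\psi^j_{\alpha}(\lambda)$ are well defined and the displayed polynomial is of honest degree $2$ in $z_1$. The hypothesis $Q_{\alpha}(\lambda)=0$ is exactly the vanishing of its discriminant, so the formulas \eqref{psi1} and \eqref{psi2} immediately give $\psi^1_{\alpha}(\lambda) = \psi^2_{\alpha}(\lambda)$; call this common value $w_0 \in \mathbb{C}$. To show $w_0$ is unimodular, I would apply Vieta's formulas: the product of the two roots equals $-e^{i\alpha}\tilde{p}_2(\lambda)/p_2(\lambda)$, and the reflection \eqref{eq:preflection} applied to $p_2$ gives $\tilde{p}_2(\lambda) = \lambda^n \overline{p_2(\lambda)}$ for $\lambda \in \mathbb{T}$, making this product unimodular; since both roots coincide at $w_0$, this forces $|w_0|^2 = 1$ and hence $w_0 \in \mathbb{T}$.

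Next I would use $\lambda \notin \Lambda^{\flat}$: by Lemma \ref{lem:collapsefiberlem} together with the discussion in Subsection \ref{subsec:Mob}, the fiber map $\phi_{\lambda}$ is then a nonconstant M\"obius transformation in $\mathrm{Aut}(\mathbb{D})$, and it is not the identity because otherwise $P_{\alpha}(z_1,\lambda) \equiv 0$ in $z_1$, contradicting $p_2(\lambda) \neq 0$. The fixed points of $\phi_{\lambda}$ in $\mathbb{C}_\infty$ are precisely the roots of $P_{\alpha}(\,\cdot\,,\lambda)$, since $\infty$ is not fixed (one has $\phi_{\lambda}(\infty) = e^{i\alpha}\tilde{p}_1(\lambda)/p_2(\lambda) \in \mathbb{C}$). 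Thus $\phi_{\lambda}$ has a single double fixed point at $w_0 \in \mathbb{T}$. Proposition \ref{prop:conjfixdescr} then forces $\phi_{\lambda}$ to be parabolic, since the elliptic case has two distinct fixed points (one in $\mathbb{D}$, one in $\mathbb{C}_\infty \setminus \overline{\mathbb{D}}$) and the hyperbolic case has two distinct fixed points on $\mathbb{T}$; only the parabolic case is compatible with a single repeated fixed point on $\mathbb{T}$.

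The main obstacle here is bookkeeping rather than computation: one must track fixed points in $\mathbb{C}_\infty$ rather than merely in $\overline{\mathbb{D}}$, verify that the exclusions $\lambda \notin \Lambda^{\sharp}$ and $\lambda \notin \Lambda^{\flat}$ are exactly what is needed to keep both $P_{\alpha}(\,\cdot\,,\lambda)$ a nondegenerate quadratic and $\phi_{\lambda}$ a genuine non-identity M\"obius transformation, and rule out $\infty$ as a fixed point before invoking Proposition \ref{prop:conjfixdescr}. None of these steps is deep, but each is necessary for the trichotomy to apply cleanly.
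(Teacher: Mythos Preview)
Your proof is correct and follows essentially the same route as the paper: both use that $\lambda\notin\Lambda^{\sharp}$ makes $P_{\alpha}(\,\cdot\,,\lambda)$ a genuine quadratic whose discriminant is $Q_{\alpha}(\lambda)$, that $\lambda\notin\Lambda^{\flat}$ makes $\phi_{\lambda}$ a non-identity element of $\mathrm{Aut}(\mathbb{D})$, and then invoke the elliptic/parabolic/hyperbolic trichotomy to conclude. The one cosmetic difference is that you verify $w_0\in\mathbb{T}$ directly via Vieta and the reflection identity before applying the trichotomy, whereas the paper reads off $\psi^1_{\alpha}(\lambda)\in\mathbb{T}$ as a consequence of landing in the parabolic case; your extra bookkeeping (ruling out $\infty$, ruling out the identity) simply makes explicit what the paper leaves implicit.
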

Note that $\psi^{j}_{\alpha}$, $j=1,2$, will in general exhibit branch points at points where $Q_{\alpha}$ vanishes, and so $\Gamma(\Phi)$ does not in general admit a description in terms of analytic functions near parabolic fixed points of $\Phi$ in $\mathbb{T}^2$. This is in contrast to the results in \cite{BPS20} which state that level sets of RIFs can be parametrized in such a way at every point of $\mathbb{T}^2$. 
\begin{proof}
Because of our assumptions, for each $\lambda \in \mathbb{T} \setminus (\Lambda^{\flat}\cup \Lambda^{\sharp})$, the fiber map $\phi_{\lambda}$ is a M\"obius transformation not of the form $e^{i\beta}z_1$. Then, by the discussion in Section \ref{sec:Prelim}, $\phi_{\lambda}$ either has two hyperbolic fixed points on the unit circle, or a parabolic fixed point on $\mathbb{T}$, or else an elliptic fixed point in $\mathbb{D}$ with another fixed point lying in $\overline{\mathbb{D}^c}$. But $Q_{\alpha}(\lambda)=0$ with 
$\lambda \notin \Lambda^{\sharp}$ implies that $\psi^{1}_{\alpha}(\lambda)=\psi^{2}_{\alpha}(\lambda)$. Since $\lambda\notin \Lambda^{\flat}$, we are in the parabolic case and then necessarily $\psi^{1}_{\alpha}(\lambda) \in \mathbb{T}$.
\end{proof}
\begin{ex}
Consider the polynomials $p$ and $\tilde{p}$ in appearing in the RIF $\phi$ in Example \ref{ex:rot} in Section \ref{sec:Ex}. There, $p_1(z_2)=3-z_2$ and $p_2(z_2)=-1$, while 
$\tilde{p}_1(z_2)=3z_2-1$ and $\tilde{p}_2(z_2)=-z_2$. Then $Q_{\alpha}(z_2)=16z_2^2-28z_2+16$, and this polynomial has conjugate roots on $\lambda_{1,2}=\frac{1}{8}(7\pm \sqrt{15}i)\in\mathbb{T}$. The corresponding fibers $F_{\lambda_{1,2}}$ bound the rotation belt of $\Phi$.
\end{ex}
\begin{ex}
It may well happen that $Q_{\alpha}\equiv 0$, so that most fixed points are parabolic. This can be seen in Example \ref{ex:para} from Section \ref{sec:Ex}. 
In that case, we have $p_1(z_2)=3-z_2$ and $p_2(z_2)=-(1+z_2)$ as well as $\tilde{p}_1(z_2)=3z_2-1$ and $\tilde{p}_2(z_2)=-(1+z_2)$, and if we insist on the normalized choice $\alpha=\pi$, then indeed $Q_{\alpha}$ is the zero polynomial. As we saw before, the set $\{(1,e^{it_2})\colon t_2\neq 0,\pi\}$ consists of parabolic fixed points of $\Phi$.

This example also illustrates the need to assume $\lambda \notin \Lambda^{\flat}\cup \Lambda^{\sharp}$. Namely, $\phi(z_1,1)=1$ for $\lambda=1\in \Lambda^{\flat}$ and $\phi(z_1,-1)=-z_1$ for $\lambda=-1 \in \Lambda^{\sharp}$, and neither case leads to a parabolic fixed point.
\end{ex}
\begin{theorem}\label{thm:beltcount}
Suppose $\Phi$ is a degree $(1,n)$ RISP with associated $Q_{\alpha}$ not identically zero. Then the number of rotation belts for $\Phi$ is less than or equal to
\[\frac{1}{2}\#\left[(\mathcal{Z}(Q_{\alpha}) \setminus \Lambda^{\flat})\cap \mathbb{T}\right].\]
\end{theorem}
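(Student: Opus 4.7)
The plan is to encode the classification of fiber dynamics into a single real-valued function on $\mathbb{T}$ whose sign distinguishes elliptic from hyperbolic behavior, and then count rotation belts by counting its sign changes.

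First I would introduce
\[R(\lambda) := -\frac{Q_\alpha(\lambda)\,e^{-i\alpha}}{p_2(\lambda)\tilde p_2(\lambda)},\qquad \lambda\in\mathbb{T}\setminus\Lambda^\sharp,\]
and check two things. Using the essentially $\mathbb{T}^2$-symmetric identity $\tilde Q_\alpha=e^{-2i\alpha}Q_\alpha$ together with the self-reflectivity of $p_2\tilde p_2$ (which yields $p_2(\lambda)\tilde p_2(\lambda)=\lambda^n|p_2(\lambda)|^2$ on $\mathbb{T}$), one verifies that $R$ is real-valued on its domain. From the quadratic satisfied by $\psi^1_\alpha,\psi^2_\alpha$ one reads off $(\psi^1_\alpha-\psi^2_\alpha)^2=Q_\alpha/p_2^2$ and $\psi^1_\alpha\psi^2_\alpha=-e^{i\alpha}\tilde p_2/p_2$; combining these with the fact that a disk M\"obius transformation has fixed-point pair $(z_0,1/\bar z_0)$ in the elliptic case and a pair on $\mathbb{T}$ in the hyperbolic or parabolic case, one shows that $R(\lambda)>0$ corresponds exactly to the elliptic case (no fixed points of $\phi_\lambda$ on $\mathbb{T}$), $R(\lambda)<0$ to the hyperbolic case, and $R(\lambda)=0$ to $Q_\alpha(\lambda)=0$.

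Second, I would characterize rotation belts via $R$. Up to isolated rotation fibers $\lambda\in\Lambda^\sharp\setminus\mathcal{Z}(Q_\alpha)$, on which $\phi_\lambda(z_1)=cz_1$ with $c\in\mathbb{T}\setminus\{1\}$ carries no fixed point on $\mathbb{T}$, a rotation belt is a maximal open subarc of $\{R>0\}\subset\mathbb{T}$. By continuity of $R$ on $\mathbb{T}\setminus\Lambda^\sharp$, combined with the fact that $R$ blows up with matching signs on the two sides of each $\Lambda^\sharp$-point, the two endpoints $\lambda_1,\lambda_2$ of each belt must satisfy $R(\lambda_i)=0$, and hence lie in $\mathcal{Z}(Q_\alpha)\cap\mathbb{T}$.

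Finally, I would exclude $\Lambda^\flat$ from the set of endpoints. By Lemma \ref{lem:Qvanishing}(2), any zero of $Q_\alpha$ in $\Lambda^\flat$ has even order, so $R$ does not change sign across such a zero. Thus the genuine sign changes of $R$ on $\mathbb{T}$ can occur only at odd-order zeros of $Q_\alpha$, which necessarily lie outside $\Lambda^\flat$. Each rotation belt is bordered by two such sign changes, distinct belts correspond to disjoint pairs of sign-change points, and division by two gives the stated bound $\frac{1}{2}\#[(\mathcal{Z}(Q_\alpha)\setminus\Lambda^\flat)\cap\mathbb{T}]$. The main obstacle is the edge case where an isolated $\Lambda^\flat$-point sits inside an arc of $\{R>0\}$ and, by the maximality clause in Definition \ref{def:rotbelt}, formally splits it into two rotation belts sharing a $\Lambda^\flat$ endpoint; this must be handled either by showing that each such split is compensated by an additional non-$\Lambda^\flat$ zero of $Q_\alpha$ elsewhere, or by a careful multiplicity accounting that uses the even-order conclusion of Lemma \ref{lem:Qvanishing}(2) to confirm that $\Lambda^\flat$-zeros cannot create additional parabolic/elliptic transitions beyond those already accounted for.
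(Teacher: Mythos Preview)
Your approach via the real-valued discriminant $R$ is genuinely different from the paper's. The paper argues directly with the multiplier $\mathfrak{p}(\lambda)$: it shows that on each component of $\Gamma(\Phi)$ the multiplier varies continuously (Lemma~\ref{lem:contlemma}), so a boundary fiber of a rotation belt cannot carry a hyperbolic fixed point, and then a separate continuity argument for $\Phi$ itself rules out collapsing fibers at the boundary. Your construction of $R$ packages the elliptic/hyperbolic dichotomy more algebraically, and the verifications that $R$ is real on $\mathbb{T}\setminus\Lambda^\sharp$ and that its sign encodes the fixed-point type are correct and quite clean.

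However, your identification of the ``main obstacle'' is misplaced. A $\Lambda^\flat$-point can never sit inside $\{R>0\}$: at any $\lambda\in\Lambda^\flat$ one has $p_1(\lambda)\tilde p_1(\lambda)=p_2(\lambda)\tilde p_2(\lambda)$ (this is exactly what is extracted in the proof of Lemma~\ref{lem:Qvanishing}(1)), whence $Q_\alpha(\lambda)=(p_1(\lambda)+e^{i\alpha}\tilde p_1(\lambda))^2$, and a short computation on $\mathbb{T}$ then gives
\[
R(\lambda)=-\frac{|p_1(\lambda)+e^{i\alpha}\tilde p_1(\lambda)|^2}{|p_1(\lambda)|^2}\le 0.
\]
So the scenario you flag, a collapsing fiber interrupting an elliptic arc, simply does not occur.

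The genuine gap is elsewhere. Your claim that ``each rotation belt is bordered by two such sign changes'' is not justified. A zero of $Q_\alpha$ of \emph{even} order lying in $\mathbb{T}\setminus(\Lambda^\flat\cup\Lambda^\sharp)$ gives a parabolic fiber at which $R$ touches zero without changing sign; if $R>0$ on both sides, that point is the common boundary of two adjacent rotation belts while contributing no sign change at all, and your count collapses. The repair is to abandon sign changes and argue, as the paper does, that every belt endpoint lies in $(\mathcal{Z}(Q_\alpha)\setminus\Lambda^\flat)\cap\mathbb{T}$; the factor $\tfrac12$ then comes from each belt having two endpoints. If you worry about two belts sharing an endpoint, note that such a shared endpoint is forced to be a zero of $Q_\alpha$ of order at least two, and the remark immediately following the theorem (``$Q_\alpha$ has at most $2n$ zeros on $\mathbb{T}$ counting multiplicities'') indicates that the count is intended with multiplicity.
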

\begin{proof}
Note that $\Gamma(\Phi)$ is parametrized by the analytic functions $\psi^{1,2}_{\alpha}$ away from the branch points, and that the fiber map derivative $\frac{d}{dz_1}\phi_{\lambda}$ is continuous off $\iota(\Lambda^{\flat})$, see Lemma \ref{lem:contlemma} below. Thus, within a rotation belt $B(\lambda_1, \lambda_2)$ the composite map $\lambda \mapsto \mathfrak{p}(\lambda) \in \mathbb{T}$ is continuous. Now if a curve component of $\Gamma(\Phi)$ containing elliptic fixed points were to meet $\mathbb{T}^2$ in a hyperbolic fixed point having $|\mathfrak{p}(\lambda)|\neq 1$, this would force a discontinuity in the multiplier at $\lambda_1$ or $\lambda_2$. Hence, the boundary fiber of a rotation belt contains a parabolic point, or is contained in $\iota(\Lambda^{\flat}\cup \Lambda^{\sharp})$. 

Next, we observe that neither of the boundary fibers $F_{\lambda_1}, F_{\lambda_2}$ can be collapsing. For if, say, $F_{\lambda_1}$ was collapsing, then it could contain at most finitely many singularites $(\tau_1,\lambda_1), \ldots, (\tau_m, \lambda)$ of $\phi$ by Lemma \ref{lem:collapsefiberlem}. But the collapsing of $F_{\lambda_1}$ would imply that $\Phi$ had discontinuities as $\lambda \to \lambda_1$ in $B(\lambda_1,\lambda_2)$ along vertical lines $\{\tau\}\times \mathbb{T}\subset \mathbb{T}^2$ for $\tau\neq \tau_1, \ldots, \tau_m$, which is impossible. Hence, by Theorem \ref{thm:paracrit}, this implies that the fibers $F_ {\lambda_1}$ and $F_{\lambda_2}$ associated with a rotation belt $B(\lambda_1,\lambda_2)$ contain a parabolic fixed point, or, if $\lambda \in \mathcal{Z}(Q_{\alpha})\cap \Lambda^{\sharp}$ and $\alpha=0$, have fiber map $\phi_{\lambda}=z_1$.
\end{proof}
Note that $\deg Q_{\alpha} \leq 2n$, so that $Q_{\alpha}$ has at most $2n$ zeros on $\mathbb{T}$ counting multiplicities. It may well happen, however, that $\Phi$ has fewer than $n$ rotation belts if $Q_{\alpha}(\lambda_j)=0$ for some $\lambda_j\in \Lambda^{\flat}$; see Section \ref{sec:morex} for an example with $n=2$ and a single rotation belt. Similarly, examples show that the situation where a rotation belt is bounded by a fiber where the fiber map is equal to the identity can indeed arise.

\begin{lemma}\label{lem:contlemma}
Let $\mathfrak{p}(\lambda)$ denote the multiplier of $\phi_{\lambda}$ at a point $\lambda \in \overline{\mathbb{D}}$. Then $\lambda\mapsto \mathfrak{p}(\lambda)$ is an analytic function on each component of $\Gamma(\phi)\setminus [\iota(\mathcal{Z}(Q_{\alpha}))\cap\iota(\Lambda^{\flat}\cup \Lambda^{\sharp})]$.
\end{lemma}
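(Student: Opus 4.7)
The plan is to express $\mathfrak{p}(\lambda)$ as an explicit rational formula in $\lambda$ built from $\psi^{j}_{\alpha}$ and the polynomials $p_i, \tilde p_i$, and then verify analyticity piece by piece, taking care to identify the removable-singularity phenomena that account for the exact form of the exceptional set.

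First I would carry out a direct quotient-rule computation. Using $p = p_1(z_2) + z_1 p_2(z_2)$ and $\tilde p = \tilde p_2(z_2) + z_1 \tilde p_1(z_2)$, one finds
\[
\frac{\partial \phi}{\partial z_1}(z_1,z_2) = e^{i\alpha}\,\frac{p_1(z_2)\tilde p_1(z_2) - p_2(z_2)\tilde p_2(z_2)}{p(z_1,z_2)^2},
\]
whose numerator depends on $z_2$ alone. Each component of $\Gamma(\Phi)$ is locally the graph $\{(\psi^{j}_{\alpha}(\lambda),\lambda)\}$ of one of the two analytic branches of the root of $P_\alpha=0$, so on such a component
\[
\mathfrak{p}(\lambda) = e^{i\alpha}\,\frac{p_1(\lambda)\tilde p_1(\lambda) - p_2(\lambda)\tilde p_2(\lambda)}{p(\psi^{j}_{\alpha}(\lambda),\lambda)^2}.
\]
Away from $\mathcal{Z}(Q_\alpha) \cup \Lambda^\sharp \cup \Lambda^\flat$, the numerator is a polynomial, the parametrization $\psi^{j}_{\alpha}$ is holomorphic by \eqref{psi1}--\eqref{psi2} (since $\sqrt{Q_\alpha}$ has no branch point there and the factor $1/(2p_2)$ has no pole), and by Lemma \ref{lem:collapsefiberlem} the denominator $p(\psi^{j}_{\alpha}(\lambda),\lambda)^2$ is nonvanishing, so analyticity of $\mathfrak{p}$ is immediate.

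The substance of the argument will be showing that apparent singularities of this formula at points lying in only one of the exceptional sets are in fact removable. At $\lambda_0 \in \Lambda^\sharp \setminus \mathcal{Z}(Q_\alpha)$, the quadratic $P_\alpha(\cdot,\lambda_0)$ degenerates to a linear polynomial in $z_1$, so one branch of $\psi^{j}_{\alpha}$ escapes to $\infty$ and carries no fixed point in $\overline{\mathbb{D}^2}$, while the other branch stays finite and the apparent pole created by $p_2(\lambda_0)=0$ is canceled by the matching factor of $p_2$ appearing in $p(\psi^{j}_{\alpha},\lambda)$; a short expansion yields a finite analytic limit. At $\lambda_0 \in \mathcal{Z}(Q_\alpha) \setminus (\Lambda^\flat \cup \Lambda^\sharp)$, the two branches of $\psi^{j}_{\alpha}$ merge and Theorem \ref{thm:paracrit} gives $\mathfrak{p}(\lambda_0)=1$; rationalizing the square root out of the formula for $\mathfrak{p}$ (using $(p_1+e^{i\alpha}\tilde p_1)^2 - Q_\alpha = 4e^{i\alpha}(p_1\tilde p_1 - p_2\tilde p_2)$) removes the branch point and produces an analytic extension.

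The step I expect to be the main obstacle is the case $\lambda_0 \in \Lambda^\flat \setminus \mathcal{Z}(Q_\alpha)$: the proof of Lemma \ref{lem:Qvanishing} shows that $p_1\tilde p_1 - p_2\tilde p_2$ already vanishes at every $\lambda_0\in\Lambda^\flat$, so $\mathfrak{p}$ takes the indeterminate form $0/0$. Resolving it requires tracking the orders of vanishing of numerator and denominator simultaneously and invoking the Fej\'er--Riesz factorization from Lemma \ref{lem:Qvanishing}(2) to match them, after which an analytic limit is obtained. Only when $\lambda_0$ belongs to both $\mathcal{Z}(Q_\alpha)$ and $\Lambda^\flat \cup \Lambda^\sharp$ do these cancellation mechanisms collide and fail, accounting for the specific intersection form of the excluded set in the statement.
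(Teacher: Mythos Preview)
Your proposal is far more elaborate than the paper's argument, which consists of two sentences: for $\lambda\notin\Lambda^\flat\cup\Lambda^\sharp$ the fiber map $\phi_\lambda$ is a nontrivial M\"obius transformation, so $z_1\mapsto\frac{\partial\phi}{\partial z_1}(z_1,\lambda)$ is well-defined and analytic, and composing with the parametrization $\psi^j_\alpha$ (analytic off $\mathcal{Z}(Q_\alpha)$) yields analyticity of $\mathfrak{p}(\lambda)=\frac{\partial\phi}{\partial z_1}(\psi^j_\alpha(\lambda),\lambda)$ away from $\mathcal{Z}(Q_\alpha)\cup\Lambda^\flat\cup\Lambda^\sharp$. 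Note that this establishes analyticity only off the \emph{union} of the three sets, not the intersection appearing in the lemma's statement; the paper's proof makes no attempt at the removable-singularity analysis you outline, and for the applications in Theorem~\ref{thm:beltcount} and the lemma that follows, the union version already suffices. Your explicit formula $\frac{\partial\phi}{\partial z_1}=e^{i\alpha}(p_1\tilde p_1-p_2\tilde p_2)/p^2$ is correct and recovers this immediately.

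Your more ambitious program, aimed at the intersection form of the excluded set, runs into a genuine obstacle in the case $\lambda_0\in\mathcal{Z}(Q_\alpha)\setminus(\Lambda^\flat\cup\Lambda^\sharp)$. Carrying your own computation further, one finds (up to the sign convention in \eqref{psi1}--\eqref{psi2}) that $p(\psi^j_\alpha(\lambda),\lambda)=\tfrac{1}{2}\bigl[(p_1+e^{i\alpha}\tilde p_1)\mp\sqrt{Q_\alpha}\bigr]$, and combining this with the identity you cite gives
\[
\mathfrak{p}^{j}(\lambda)=\frac{(p_1+e^{i\alpha}\tilde p_1)\pm\sqrt{Q_\alpha}}{(p_1+e^{i\alpha}\tilde p_1)\mp\sqrt{Q_\alpha}}.
\]
At a \emph{simple} zero of $Q_\alpha$ (the generic parabolic situation, cf.\ Example~\ref{ex:AMYex}) this expression retains a genuine square-root branch in $\lambda$: the two values of $\mathfrak{p}$ coincide at $\lambda_0$ and equal $1$, but neither branch extends analytically in $\lambda$ across $\lambda_0$. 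One does obtain analyticity in a Puiseux coordinate on the curve $\Gamma(\Phi)$, but not in $\lambda$ itself, so the rationalization you propose does not remove the branch point. Your plan would therefore prove the lemma in the intersection form only under the extra hypothesis that every zero of $Q_\alpha$ outside $\Lambda^\flat\cup\Lambda^\sharp$ has even order, which is not assumed.
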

\begin{proof}
Note that for each $\lambda \in \mathbb{T}\setminus (\Lambda^{\flat}\cup \Lambda^{\sharp})$, the M\"obius map $\frac{d}{dz_1} \phi_ {\lambda}$ is non-trivial and has no critical points. This implies that $\lambda  \mapsto \frac{d}{dz_1} \phi_ {\lambda}(\psi^{j}_{\alpha}(\lambda))$ is analytic off $\mathcal{Z}(Q_{\alpha})$ and $\Lambda^{\flat}\cup \Lambda^{\sharp}$.
\end{proof}
Lemma \ref{lem:contlemma} has the following consequence for rotations belts.
\begin{lemma} 
Suppose $B(\lambda_1, \lambda_2)$ is a rotation belt for $\Phi=(\phi,z_2)$ and that $\mathfrak{p}(\lambda)$ is non-constant on $(\lambda_1, \lambda_2)$. Then uncountably many $\phi_{\lambda}$ are conjugate to an irrational rotation.
\end{lemma}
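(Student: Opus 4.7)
The plan is to combine three facts: every fiber map in the belt is conjugate to a rotation whose angle is determined by the multiplier $\mathfrak{p}(\lambda)$, the multiplier is analytic in $\lambda$ on the open arc $(\lambda_1,\lambda_2)$, and the roots of unity form a countable subset of $\mathbb{T}$. A standard application of the identity theorem then isolates an uncountable set of $\lambda$ giving irrational-rotation multipliers.

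First I would check that $\mathfrak{p}$ is analytic and $\mathbb{T}$-valued on the whole arc. By Definition \ref{def:rotbelt} no fiber map $\phi_\lambda$ with $\lambda\in(\lambda_1,\lambda_2)$ has fixed points on $\mathbb{T}$, and Lemma \ref{lem:collapsefiberlem} prohibits $\Lambda^{\flat}\cap(\lambda_1,\lambda_2)$ (otherwise an SF-point would appear inside $B(\lambda_1,\lambda_2)$); hence on $(\lambda_1,\lambda_2)\setminus\Lambda^{\sharp}$ each $\phi_\lambda$ is an elliptic M\"obius transformation, and Proposition \ref{prop:conjfixdescr} identifies $\phi_\lambda$ with the rotation $z_1\mapsto \mathfrak{p}(\lambda)z_1$ after conjugation. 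Lemma \ref{lem:contlemma} gives analyticity of $\mathfrak{p}$ there. Across the finite exceptional set $\Lambda^{\sharp}\cap(\lambda_1,\lambda_2)$, the fiber map reduces to $z_1\mapsto e^{-i\alpha}z_1$ whose multiplier is $e^{-i\alpha}$, so $\mathfrak{p}$ extends continuously, and Riemann's removable singularity theorem (using the a priori bound $|\mathfrak{p}|\le 1$) promotes this to an analytic extension across the whole arc. By hypothesis the extended $\mathfrak{p}$ is non-constant.

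The cardinality count then finishes the argument. For each root of unity $\omega\in\mathbb{T}$, the identity theorem applied to the non-constant analytic function $\mathfrak{p}-\omega$ shows that $\mathfrak{p}^{-1}(\omega)\cap(\lambda_1,\lambda_2)$ is discrete, hence at most countable. Since the set $R$ of roots of unity in $\mathbb{T}$ is itself countable,
\[\bigcup_{\omega\in R}\bigl(\mathfrak{p}^{-1}(\omega)\cap(\lambda_1,\lambda_2)\bigr)\]
is a countable union of at most countable sets, hence countable. Its complement in the uncountable arc $(\lambda_1,\lambda_2)$ is therefore uncountable, and for every $\lambda$ in this complement $\mathfrak{p}(\lambda)$ is unimodular but not a root of unity. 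Proposition \ref{prop:conjfixdescr}, together with the definition of an irrationally indifferent fixed point, then shows that $\phi_\lambda$ is conjugate to an irrational rotation.

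The main obstacle I anticipate is the removable singularity step across $\Lambda^{\sharp}\cap(\lambda_1,\lambda_2)$: one must verify that the explicit rotation behavior $\phi_\lambda(z_1)=e^{-i\alpha}z_1$ at such exceptional $\lambda$ fits together analytically with the multiplier defined on generic nearby fibers, so that no branching develops. If this proves delicate, one can bypass it entirely by running the cardinality argument on the open dense subarc where Lemma \ref{lem:contlemma} applies verbatim; this subset is already uncountable, and the same conclusion follows.
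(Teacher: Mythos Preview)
Your argument is correct, and the fallback you mention (restricting to an open subarc where Lemma~\ref{lem:contlemma} applies directly) already suffices without the removable-singularity step. One small slip: at a point of $\Lambda^{\sharp}$ in the belt the fiber map is the rotation $z_1\mapsto e^{i\alpha}\tfrac{\tilde p_1(\lambda)}{p_1(\lambda)}\,z_1$, not necessarily $z_1\mapsto e^{-i\alpha}z_1$ unless $Q_\alpha(\lambda)=0$ as well; this does not affect your Riemann extension argument, since the multiplier remains bounded and continuous in any case.

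Your route is genuinely different from the paper's. The paper argues more elementarily: the angle $-i\log\mathfrak p(\lambda)$ is a non-constant continuous real-valued function on $(\lambda_1,\lambda_2)$ that tends to $0$ at both endpoints (because the boundary fibers carry parabolic fixed points, where the multiplier equals $1$), so by the intermediate value theorem its image contains a nondegenerate interval, and such an interval meets $\mathbb R\setminus\mathbb Q$ in an uncountable set. Your approach trades this endpoint information for analyticity and the identity theorem, showing that the preimage of the countable set of roots of unity under a non-constant analytic map is itself countable. The paper's version is shorter and needs only continuity plus the parabolic endpoint behavior established in Theorem~\ref{thm:beltcount}; yours is self-contained on the open belt, avoids any appeal to what happens at $\lambda_1,\lambda_2$, and would generalize to situations where the boundary multipliers are unknown.
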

\begin{proof}
By assumption $-i\log(\mathfrak{p}(\lambda))$ is a non-constant smooth function taking values in $[-\pi, \pi]$, and since moreover $-i\log(\mathfrak{p}(\lambda_{1,2}))=0$, the statement follows from the intermediate value theorem.
\end{proof}

\subsection{Hyperbolic fixed points and SF-points for simple RISPs}
We now examine the dynamical behavior of $\Phi$ in the vicinity of an SF-point. 

All singularities of a RISP are points of $\mathbb{T}^2$, and lie in collapsing fibers by Lemma \ref{lem:collapsefiberlem}. 
\begin{lemma}
Let $(\tau_1, \lambda_1), \ldots, (\tau_n, \lambda_m)$ be SF-points of $\Phi$. Then $(\tau_j, \lambda_j) \in \Gamma(\Phi)\cap \mathbb{T}^2$ for $j=1,\ldots, m$.
\end{lemma}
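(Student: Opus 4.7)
The plan is to verify, by a direct and essentially bookkeeping argument, that SF-points automatically satisfy the polynomial fixed-point equation that defines $\Gamma(\Phi)$ in the RISP setting. First I would observe that, by Definition of SF-point, each $(\tau_j,\lambda_j)$ is a singularity of at least one component of $\Phi=(\phi,z_2)$. Since the second component is literally the coordinate function $z_2$, which has no denominator and hence no singularities on $\overline{\mathbb{D}^2}$, every SF-point must be a singularity of the first component $\phi=e^{i\alpha}\tilde{p}/p$. In particular $p(\tau_j,\lambda_j)=0$, and since all such singularities lie on the distinguished boundary, $(\tau_j,\lambda_j)\in\mathbb{T}^2$.

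Next I would use the key structural fact about reflection: for $(\tau_j,\lambda_j)\in\mathbb{T}^2$, the identity \eqref{eq:preflection} together with $p(\tau_j,\lambda_j)=0$ forces $\tilde{p}(\tau_j,\lambda_j)=0$ as well. Substituting these two vanishings into the fixed-point polynomial $P(z)=e^{i\alpha}\tilde{p}(z)-z_1p(z)$ associated to the RISP gives $P(\tau_j,\lambda_j)=e^{i\alpha}\cdot 0-\tau_j\cdot 0=0$. Thus $(\tau_j,\lambda_j)\in\mathcal{Z}(P)\cap\mathbb{T}^2$, which under the RISP specialization of the paper's setup is precisely $\Gamma(\Phi)\cap\mathbb{T}^2$.

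There is no real obstacle here: the content of the lemma is that the polynomial equation $P=0$ cutting out the fixed point set in the RISP case is automatically (and trivially) satisfied at singularities, because the numerator and denominator of $\phi$ both vanish there. One could phrase this more formally by noting that, although $\phi$ is genuinely discontinuous at an SF-point, the regularized polynomial condition $\tilde{p}-e^{-i\alpha}z_1p=0$ is what encodes fixed-point behavior throughout $\overline{\mathbb{D}^2}$, and any common zero of $p$ and $\tilde{p}$ belongs to it. The only mild point worth recording in passing is the compatibility between the SF-point normalization convention (choosing $\alpha$ so that $\phi^*(\tau_j,\lambda_j)=\tau_j$ at one selected singularity) and Lemma \ref{lem:collapsefiberlem}, which guarantees that this identity in fact holds at every singularity of a bidegree $(1,n)$ RIF simultaneously, so that every singularity of such a $\phi$ is indeed an SF-point of $\Phi$.
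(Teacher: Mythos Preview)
Your argument is correct and matches the paper's implicit reasoning: the paper states this lemma without proof, having already remarked (immediately after defining $\Gamma(\Phi)$) that $\Gamma(\Phi)$ consists of the fixed points of $\Phi$ together with all singular points of $\Phi$. Your proof simply unwinds that remark in the RISP setting, observing that $p(\tau_j,\lambda_j)=\tilde p(\tau_j,\lambda_j)=0$ forces $P(\tau_j,\lambda_j)=0$, which is exactly the content needed.
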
 
The proof of Theorem \ref{thm:beltcount} shows that a collapsing fiber cannot be contained in a rotation belt $B(\lambda_1, \lambda_2)$, nor can it coincide with one of the boundary fibers $F_{\lambda_1}$ or $F_{\lambda_2}$. We have already seen that SF-points can be accessed in $\mathbb{T}^2$ via a component of $\Gamma(\Phi) \cap \mathbb{T}^2$ in different ways. An SF-point may belong to a component of $\Gamma(\Phi)\cap \mathbb{T}^2$ consisting of parabolic points (Example \ref{ex:para}), or to one or more components of $\Gamma(\Phi) \cap \mathbb{T}^2$ made up of hyperbolic fixed points (Example \ref{ex:fave}).

\begin{theorem}
Let $(\tau, \lambda) \in \mathbb{T}^2$ be an SF-point of a RISP $\Phi$ with associated polynomial $Q_{\alpha}$ not identically zero.
\begin{enumerate}
\item If $Q_{\alpha}(\lambda)\neq 0$ then $(\tau, \lambda)$ belongs to a single component of $\Gamma(\Phi) \cap \mathbb{T}^2$. Moreover, this component can be parametrized by an analytic function in some neighborhood of $(\tau, \lambda)$.
\item If $Q_{\alpha}(\lambda)=0$  then $(\tau, \lambda)$ belongs to two components of $\Gamma(\Phi)\cap \mathbb{T}^2$. If $Q_{\alpha}$ vanishes to even order, then each of these components can be parametrized by an analytic function in some neighborhood of $(\tau, \lambda)$.
\end{enumerate}
\end{theorem}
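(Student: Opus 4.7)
The plan is to work locally near the SF-point $(\tau,\lambda)$ by realizing $\Gamma(\Phi)\cap\mathbb{T}^2$ as the $\mathbb{T}^2$-trace of the zero set of $P_\alpha(z_1,z_2)=e^{i\alpha}\tilde p(z)-z_1 p(z)$, and exploiting that this polynomial is a genuine quadratic in $z_1$ with analytic coefficients in $z_2$. Since $(\tau,\lambda)$ is an SF-point, Lemma~\ref{lem:collapsefiberlem} places $\lambda\in\Lambda^{\flat}$, and the disjointness $\Lambda^{\flat}\cap\Lambda^{\sharp}=\emptyset$ then forces $p_2(\lambda)\neq 0$. On a disk $U\ni\lambda$ the leading coefficient $-p_2(z_2)$ of $z_1^2$ in $P_\alpha$ is bounded away from zero, so the two roots of $P_\alpha(\,\cdot\,,z_2)=0$ are precisely $\psi^1_\alpha(z_2)$ and $\psi^2_\alpha(z_2)$ from \eqref{psi1}--\eqref{psi2}.

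For Case (1), under the hypothesis $Q_\alpha(\lambda)\neq 0$, one may choose two distinct single-valued analytic branches of $\sqrt{Q_\alpha}$ on a simply connected neighborhood of $\lambda$, producing two distinct analytic functions $\psi^1_\alpha,\psi^2_\alpha$. Since $P_\alpha(\tau,\lambda)=0$, one has $\tau=\psi^j_\alpha(\lambda)$ for exactly one index $j$, and the graph $z_1=\psi^j_\alpha(z_2)$ is the unique local component of $\Gamma(\Phi)\cap\mathbb{T}^2$ through $(\tau,\lambda)$. To confirm that this graph sits in $\mathbb{T}^2$, I would invoke the essentially $\mathbb{T}^2$-symmetric identity $\tilde P_\alpha=-e^{-i\alpha}P_\alpha$: this forces the two roots of $P_\alpha(\,\cdot\,,z_2)=0$ for $z_2\in\mathbb{T}$ to be of the form $\{w,1/\bar w\}$. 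Since one root equals $\tau\in\mathbb{T}$ at $z_2=\lambda$, continuity together with non-vanishing discriminant on $U$ keeps both branches unimodular for $z_2\in U\cap\mathbb{T}$.

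For Case (2), a direct substitution computes $P_\alpha(z_1,\lambda)=(\tau-z_1)p(z_1,\lambda)=-p_2(\lambda)(z_1-\tau)^2$ along the collapsing fiber, exhibiting a double root that both re-derives $Q_\alpha(\lambda)=0$ and shows the vanishing order $\nu$ of $Q_\alpha$ at $\lambda$ is at least $2$ (in agreement with Lemma~\ref{lem:Qvanishing}(2)). When $\nu=2m$ is even, factor $Q_\alpha(z_2)=(z_2-\lambda)^{2m}h(z_2)$ with $h(\lambda)\neq 0$; then $\sqrt{Q_\alpha(z_2)}=\pm(z_2-\lambda)^m\sqrt{h(z_2)}$ splits into two single-valued analytic branches, and inserting them into \eqref{psi1}--\eqref{psi2} yields two analytic functions $\psi^1_\alpha,\psi^2_\alpha$ with $\psi^1_\alpha(\lambda)=\psi^2_\alpha(\lambda)=\tau$ but $\psi^1_\alpha(z_2)-\psi^2_\alpha(z_2)=(z_2-\lambda)^m\sqrt{h(z_2)}/p_2(z_2)\neq 0$ for $z_2\neq\lambda$. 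These two analytic graphs furnish two distinct smooth local components of $\Gamma(\Phi)\cap\mathbb{T}^2$ through $(\tau,\lambda)$, and the reflection-pairing argument from Case (1) again certifies unimodularity along $\mathbb{T}$.

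The main obstacle I anticipate is the odd-order sub-case $\nu=2m+1\geq 3$ of Case (2), where $\sqrt{Q_\alpha(z_2)}=(z_2-\lambda)^m\sqrt{(z_2-\lambda)h(z_2)}$ carries a genuine branch point at $\lambda$, so $\psi^1_\alpha$ and $\psi^2_\alpha$ are not single-valued analytic functions but form one ramified two-valued algebraic function. Here I would perform a Puiseux analysis of $P_\alpha$ near $(\tau,\lambda)$, obtaining an expansion $z_1-\tau \sim c(z_2-\lambda)^{m+1/2}+\cdots$, and then trace how this single ramified complex branch restricts to $\mathbb{T}^2$. The reflection pairing $\{w,1/\bar w\}$ remains the tool that keeps the real trace unimodular, but separating the real trace into two distinct real-analytic arcs meeting at $(\tau,\lambda)$, as opposed to a single cusp, is the delicate step; it is precisely this separation that fails to be analytic and that is why the even-order hypothesis is essential in the second half of statement (2).
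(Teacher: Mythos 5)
Your overall strategy coincides with the paper's: treat $P_{\alpha}=e^{i\alpha}\tilde{p}-z_1p$ as a quadratic in $z_1$ whose leading coefficient $-p_2(z_2)$ is nonvanishing near $\lambda$ because $\Lambda^{\flat}\cap\Lambda^{\sharp}=\emptyset$, read off the local components from the root functions $\psi^{1,2}_{\alpha}$, and use analyticity of $\sqrt{Q_{\alpha}}$ at an even-order zero to get statement (2). Your Case (1), including the reflection-pairing argument keeping the branch unimodular, is a more careful rendering of what the paper leaves implicit. Two steps, however, need repair.

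First, the identity $P_{\alpha}(z_1,\lambda)=(\tau-z_1)p(z_1,\lambda)=-p_2(\lambda)(z_1-\tau)^2$ is not a consequence of $(\tau,\lambda)$ being an SF-point. On a collapsing fiber one has $e^{i\alpha}\tilde{p}(\cdot,\lambda)=c\,p(\cdot,\lambda)$ where $c$ is the constant value of $\phi_{\lambda}$, so the correct factorization is $P_{\alpha}(z_1,\lambda)=-p_2(\lambda)(z_1-c)(z_1-\tau)$ with $\tau=-p_1(\lambda)/p_2(\lambda)$ the singular point of that fiber. The equality $c=\tau$ is equivalent to $p_1(\lambda)+e^{i\alpha}\tilde{p}_1(\lambda)=0$, which by Lemma \ref{lem:Qvanishing}(1) is precisely the condition $Q_{\alpha}(\lambda)=0$. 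As written, your ``direct substitution'' uses only the SF-point hypothesis, so it would prove $Q_{\alpha}(\lambda)=0$ at every SF-point and make Case (1) vacuous, in conflict with the paper's reading of Example \ref{ex:multisingex}. You must either invoke $Q_{\alpha}(\lambda)=0$ to justify $c=\tau$ (in which case the claimed ``re-derivation'' is circular and should be dropped) or explain why the fiber collapses onto the singularity itself.

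Second, in Case (2) the unimodularity of the two branches does not follow from ``the reflection-pairing argument from Case (1).'' That argument was anchored by the two roots being distinct, one of them equal to $\tau\in\mathbb{T}$, and the discriminant nonvanishing throughout $U\cap\mathbb{T}$; the last condition is exactly what fails at $\lambda$ when $Q_{\alpha}(\lambda)=0$. A colliding pair $\{w,1/\overline{w}\}$ is perfectly capable of separating off the circle as $z_2$ moves away from $\lambda$ --- this is what happens at a parabolic fiber bounding a rotation belt --- so an additional input is needed to exclude elliptic fiber maps adjacent to a collapsing fiber (the relevant fact is extracted in the proof of Theorem \ref{thm:beltcount}). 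The paper's proof is silent on this point too, so it is a gap to close rather than a wrong turn; likewise your Puiseux analysis of the odd-order subcase goes beyond what the paper attempts and correctly identifies why the even-order hypothesis is needed for the analytic parametrization.
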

\begin{proof}
Item (1) follows from the definition of $\psi^{j}$, $j=1,2$, in \eqref{psi1} and \eqref{psi2} together with the observation that points in $\Lambda^{\flat}$ cannot be elements of $\Lambda^{\sharp}$.

Let us turn to item (2). Lemma \ref{lem:Qvanishing} asserts that $Q_{\alpha}$ vanishes to order at least $2$. If $Q_{\alpha}$ in fact vanishes to even order at $z_2=\lambda$, then the function $\sqrt{Q_{\alpha}(z_2)}$ is analytic in a neighborhood of $\lambda$. Hence $\psi^{j}$, $j=1,2$, are analytic also.
\end{proof}

In other words, a pair of components of $\Gamma(\Phi)$ coming together at an SF-point generically cross transversally. In fact, examination of a wide range of examples (see \cite{TDpage}) suggests that the components of $\Gamma(\Phi)$ can always be parametrized by analytic functions in a neighborhood of an SF-point, but we have not been able to find an elementary proof. 

As an example, the RISP in Example \ref{ex:multisingex} has two SF-points, with two hyperbolic fixed point curves meeting at one of them, and a single hyperbolic fixed point curve passing through the other.

\section{Further examples}\label{sec:morex}

We conclude by examining two examples with more intricate singularities and dynamical behavior. Further examples and images can be found on the webpage \cite{TDpage}. Also included on \cite{TDpage} are images associated with the two-dimensional Blaschke products considered in \cite{PS08,PR10}, and with other types of RIMs that are not studied in detail in this paper.
\begin{ex}\label{ex:AMYex}
\begin{figure}[h!]
    \subfigure[$\Phi^n$ for $n=1$.]
      {\includegraphics[width=0.35 \textwidth]{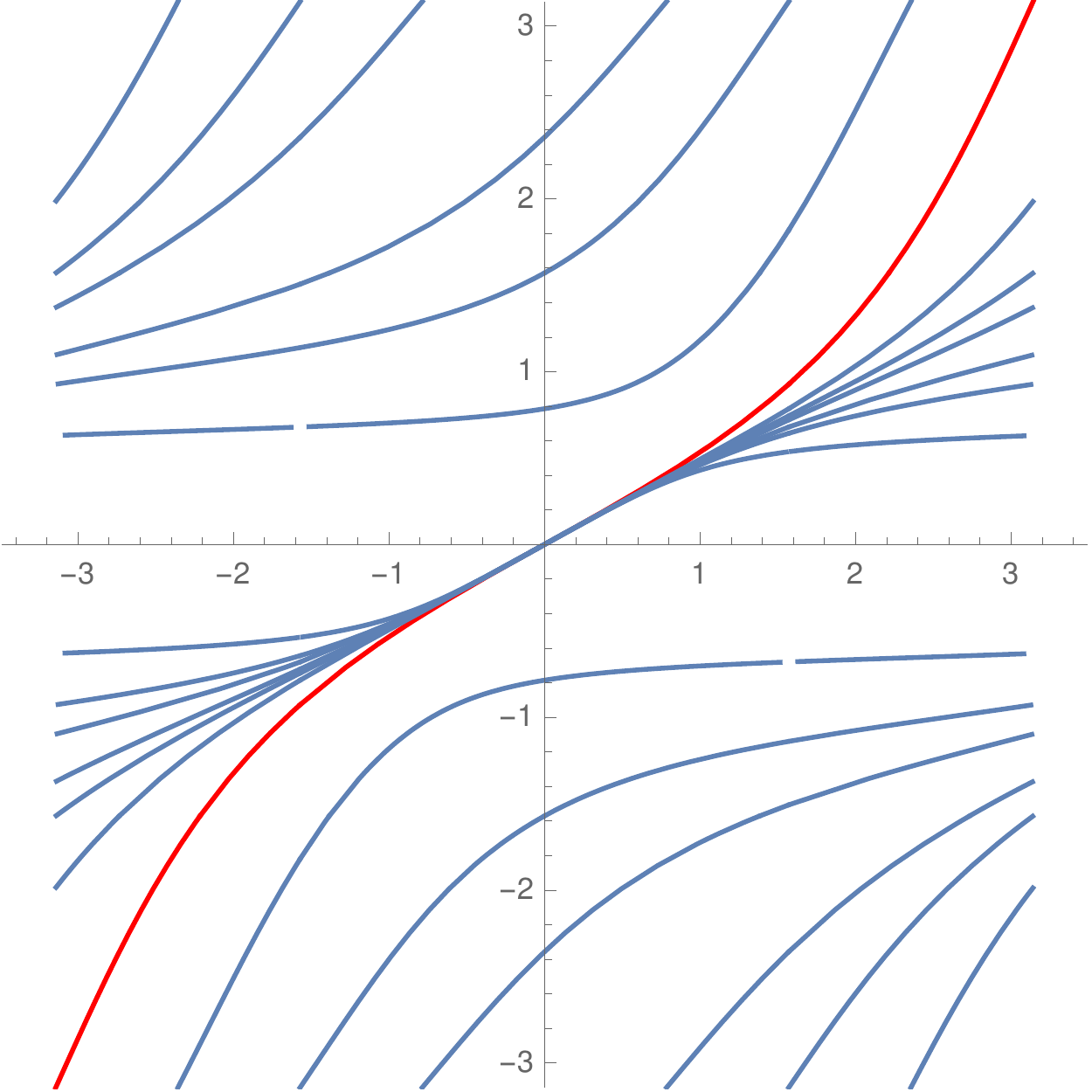}}
    \hfill
    \subfigure[$\Phi^n$ for $n=2$.]
      {\includegraphics[width=0.35 \textwidth]{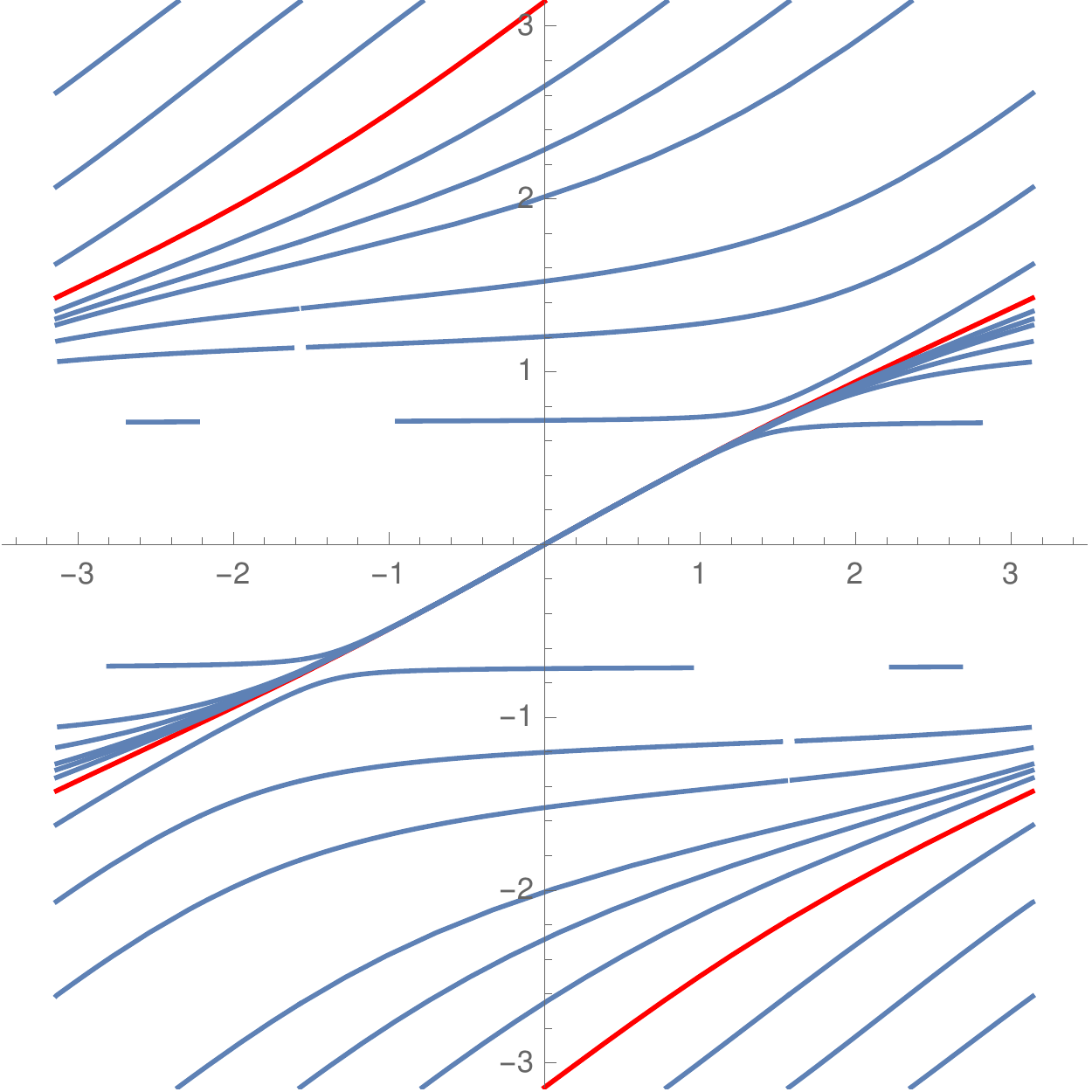}}
      \subfigure[$\Phi^n$ for $n=5$]
      {\includegraphics[width=0.35 \textwidth]{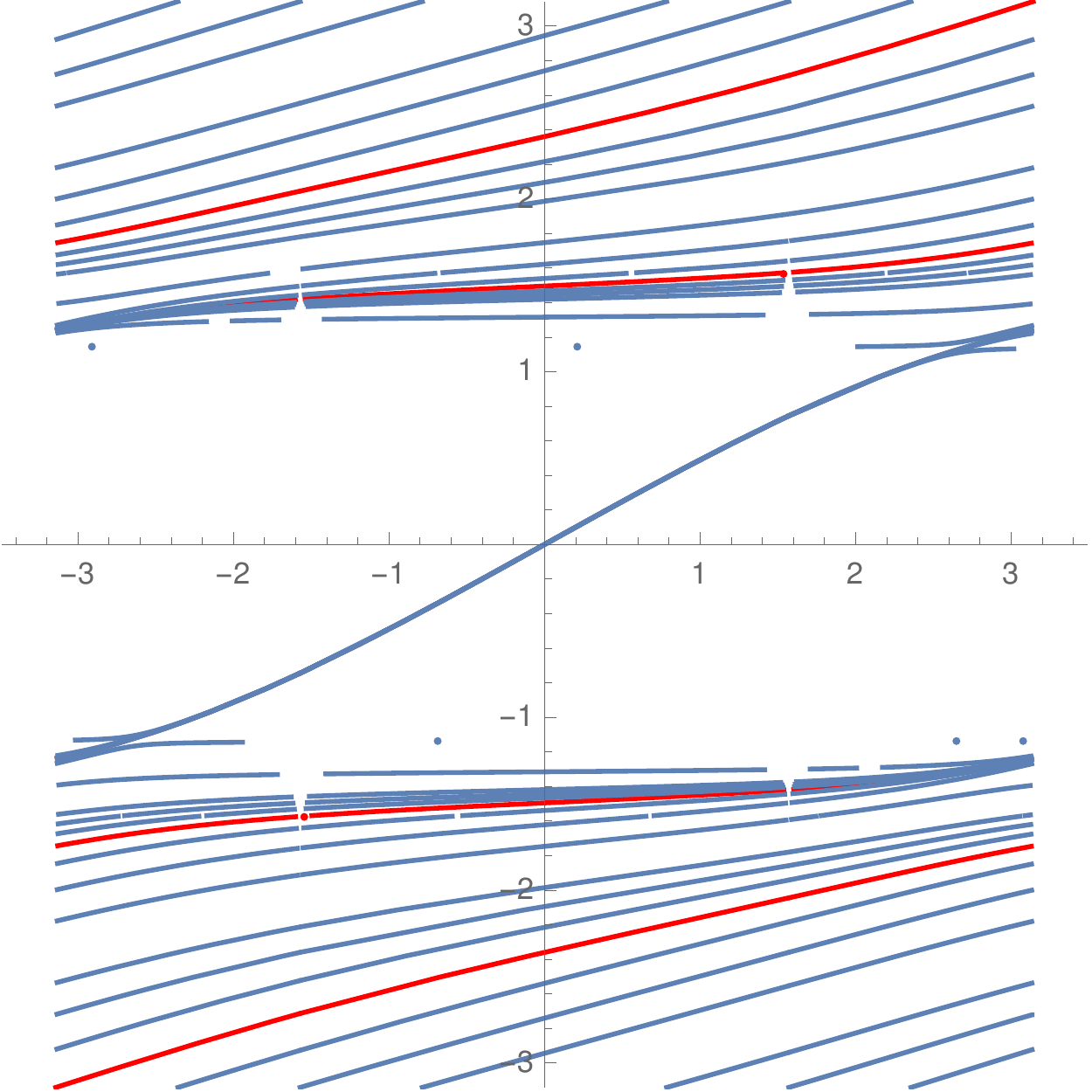}}
    \hfill
    \subfigure[$\Gamma(\Phi)\cap\mathbb{T}^2$. Parabolic fibers $F_{\lambda_{2,3}}$ in pink.]
      {\includegraphics[width=0.35 \textwidth]{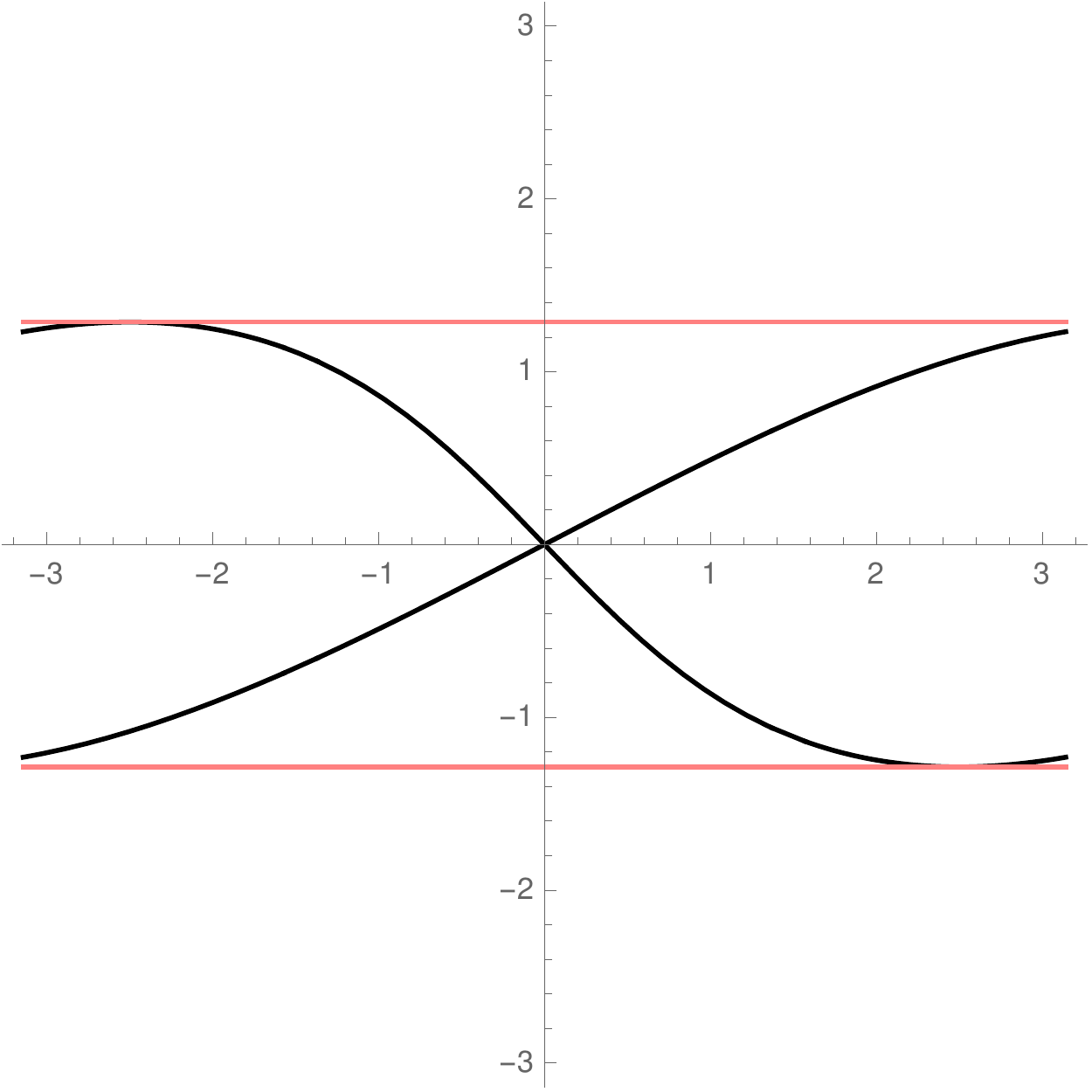}}
  \caption{\textsl{Iteration of $\Phi$ given by \eqref{AMYRIF} on $\mathbb{T}^2$. (Successive images of the vertical axis marked red)}}
  \label{AMYplots}
\end{figure}
Consider the rational inner function
\begin{equation}
\phi(z_1,z_2)=-\frac{\tilde{p}(z)}{p(z)}=-\frac{4z_1z_2^2 - z_2^2 - 3z_1z_2 - z_2 + z_1}{4 - z_1 - 3z_2 - z_1z_2 + z_2^2},
\label{AMYRIF}
\end{equation}
which features as an example in the paper \cite{AMY12}, and has been further studied in \cite[Section 1.3]{BPS20} and \cite[Example 5.2]{BCSprep} (without the minus sign in front). Since $\phi$ has a single singularity at $(1,1)$ and $\phi^*(1,1)=1$, the corresponding RISP $\Phi$ has a SF-point at $(1,1)$. We further check that $\phi(1,z_2)=1$, confirming the presence of a collapsing fiber.

In this example, $\alpha=\pi$, and
\[p(z)=p_1(z_2)+z_1p_2(z_2)\]
with
\[p_1(z_2)=4-3z_2+z_2^2\quad \textrm{and}\quad p_2(z_2)=-(1+z_2).\]
We have $p_2(-1)=0$, and as expected $\phi(z_1,-1)=-z_1$, a rotation of order $2$.
A short computation shows that the associated polynomial $Q_{\alpha}$ is given by
\begin{equation}
Q_{\alpha}(z_2)=25(z_2-1)^2\left(z_2^2-\frac{14}{25}z_2+1\right).
\label{AMYQ}
\end{equation}
Then $Q_{\alpha}(z_2)$ has a double root at $\lambda_1=1$, and two roots at $\lambda_{2,3}=\frac{1}{25}(7\pm 24i)$. 

Figure \ref{AMYplots} displays the dynamics of $\Phi$ on $\mathbb{T}^2$. We notice a single rotation belt bounded by $F_{\lambda_2}$ and $F_{\lambda_3}$, one fewer than the maximum given the degree of $Q_{\alpha}$. Also visible are two curves containing hyperbolic fixed points with a normal crossing at $(1,1)$, which is contained in the collapsing fiber.
\begin{figure}
\includegraphics[width=0.3 \textwidth]{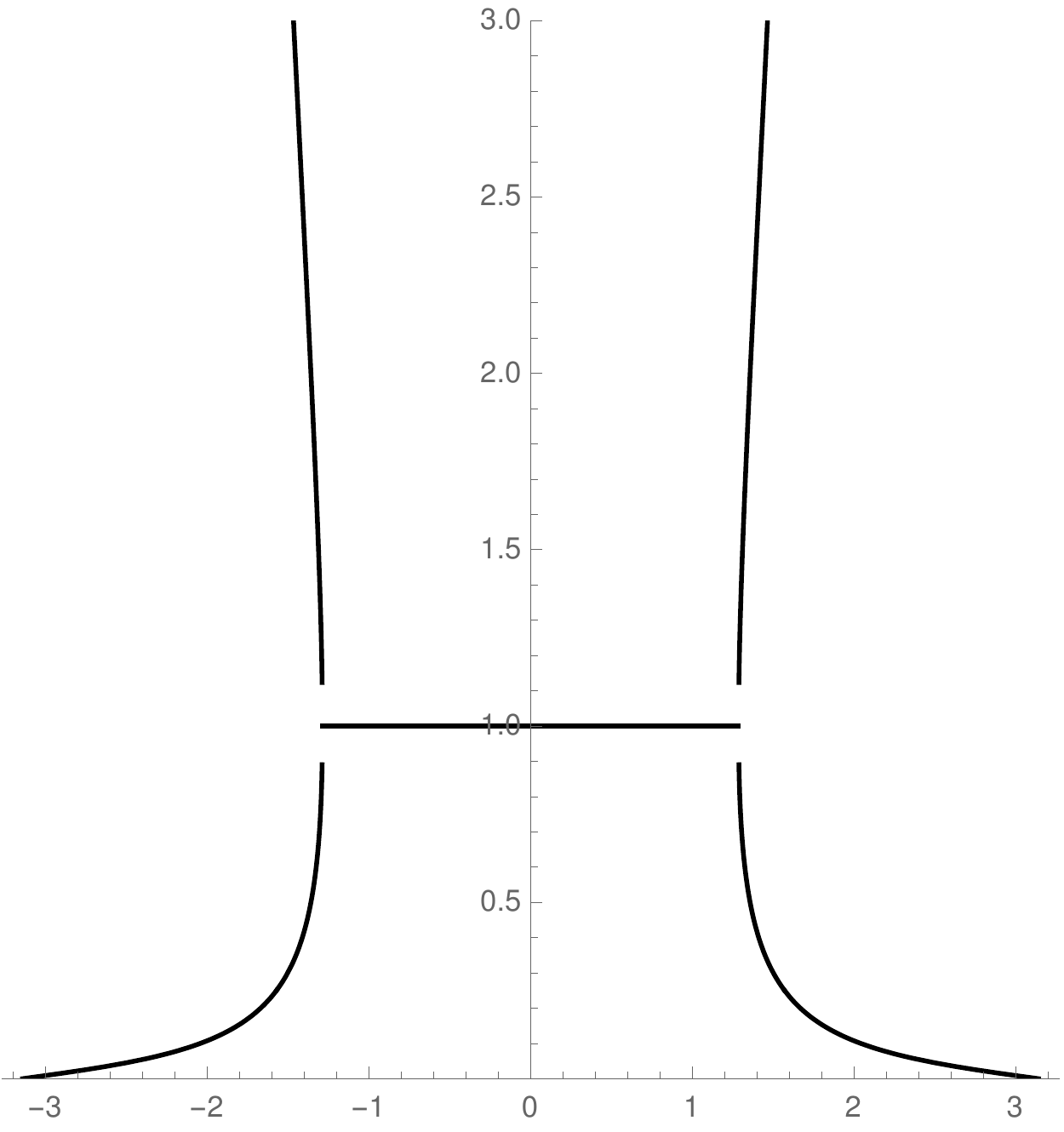}
\caption{Plot of $|\psi^{1,2}(e^{it_2})|$ showing interior/exterior fixed points coming together at parabolic fixed points.}
\label{AMYbranchpts}
\end{figure}
The hyperbolic fixed points are visible on a pair of curves parametrized by
\[\psi^{1,2}_{-1}(z_2)=\frac{1}{2(1+z_2)}\left(5 - 6 z_2 +5 z_2^2 \pm (-1 + z_2) \sqrt{25 - 14 z_2 + 25 z_2^2}\right).\]
These curves exhibit a normal crossing at $(1,1)$, the singularity of the RIF, reflecting the double root of $Q_{\alpha}$ at $(1,1)$. The branch point nature of the parabolic fixed points at $\lambda_2$ and $\lambda_3$ can be observed in Figure \ref{AMYbranchpts} which displays the absolute value of the functions $\psi^{1,2}(e^{it_2})$.

\end{ex}

\begin{ex}\label{ex:multisingex}
\begin{figure}[h!]
    \subfigure[$\Phi^n$ for $n=1$.]
      {\includegraphics[width=0.35 \textwidth]{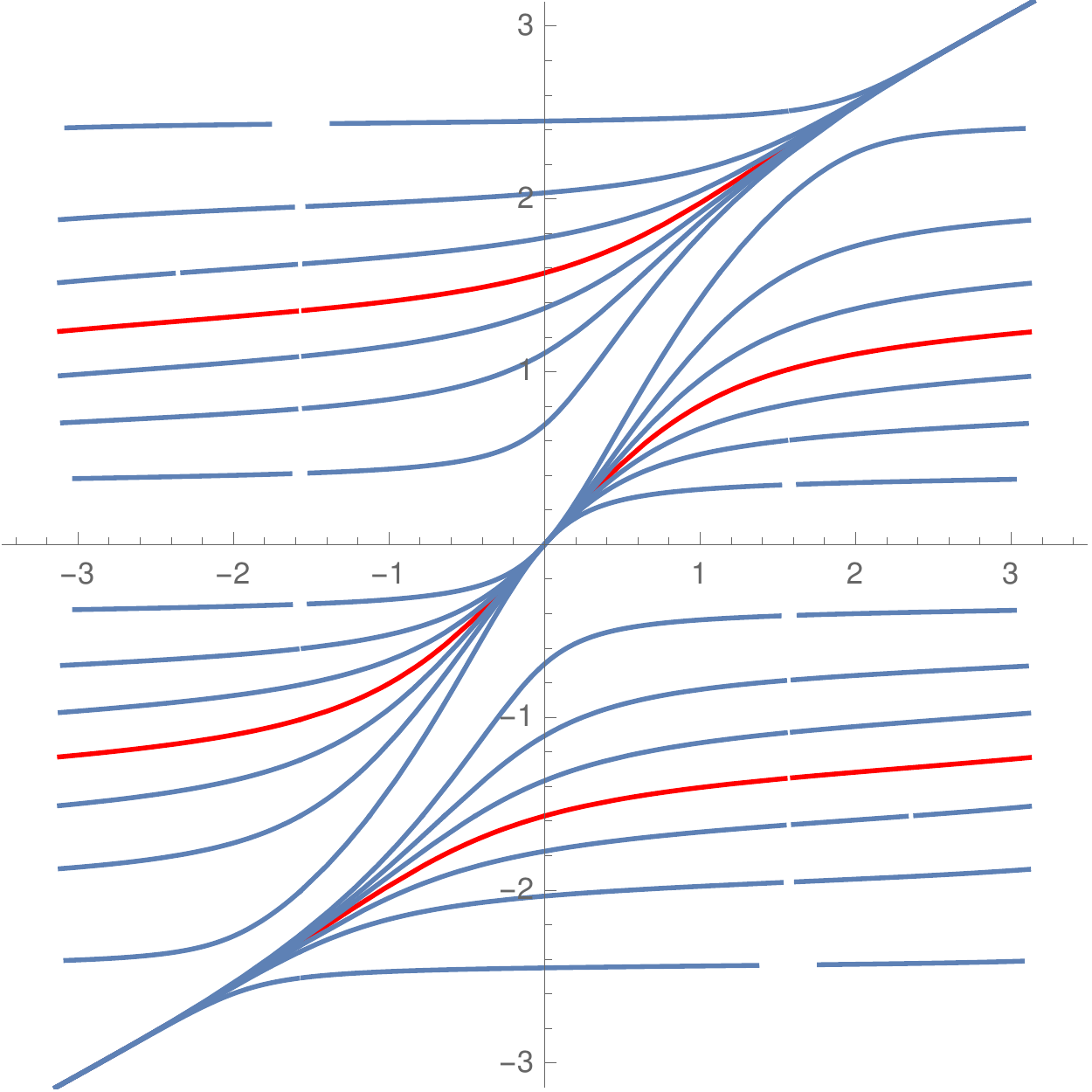}}
    \hfill
    \subfigure[$\Phi^n$ for $n=2$.]
      {\includegraphics[width=0.35 \textwidth]{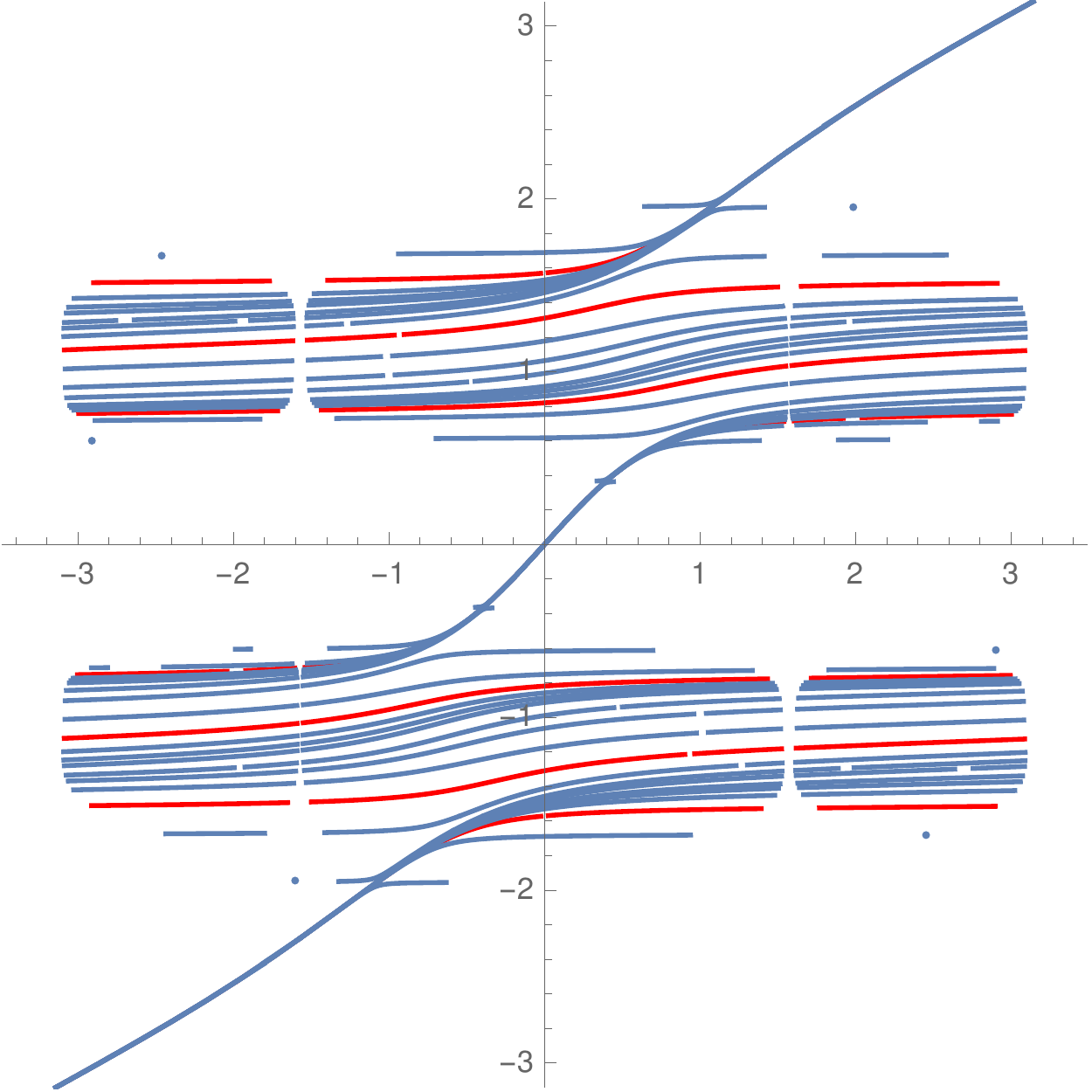}}
      \subfigure[$\Phi^n$ for $n=5$]
      {\includegraphics[width=0.35 \textwidth]{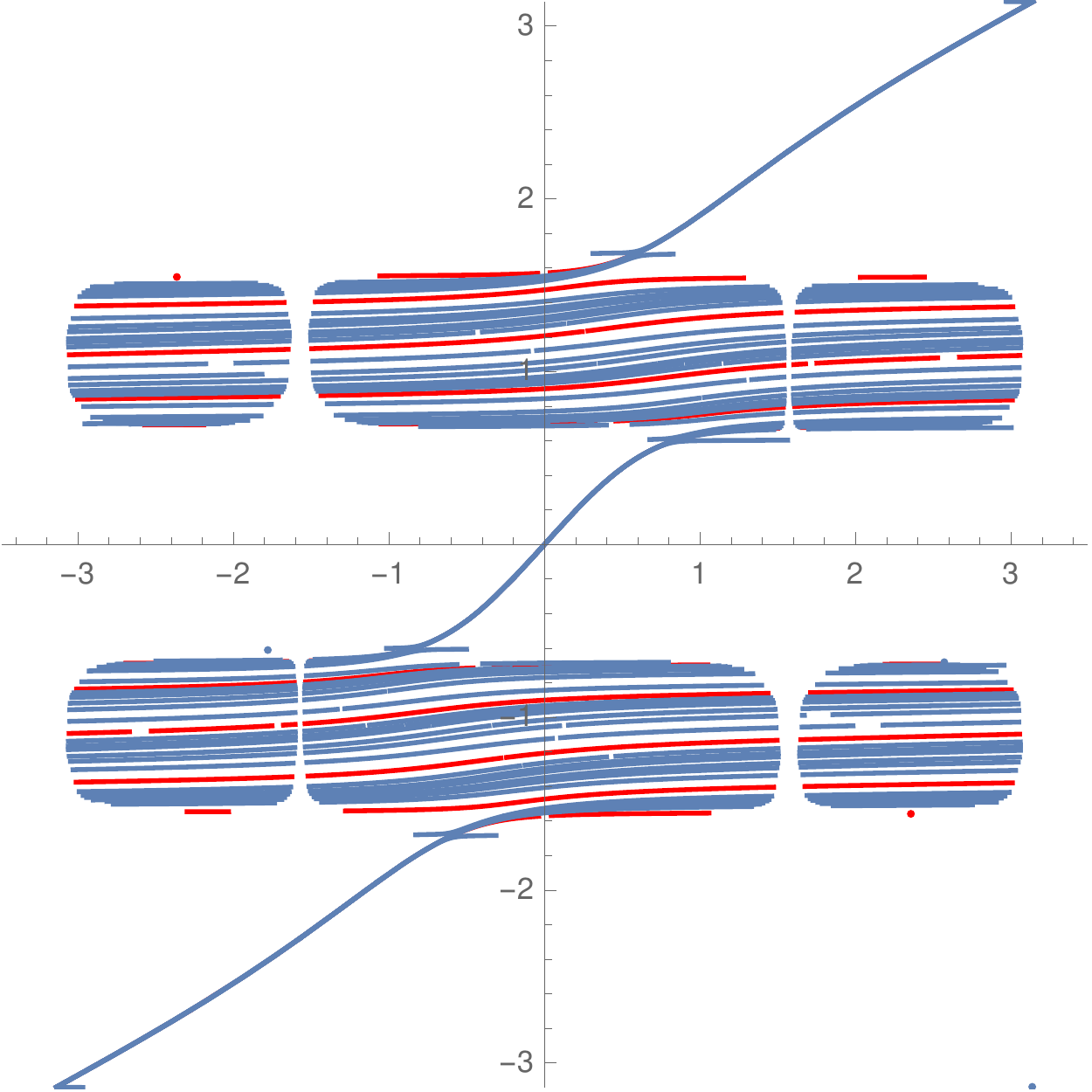}}
    \hfill
    \subfigure[$\Gamma(\Phi)\cap\mathbb{T}^2$. Parabolic fibers $F_{\lambda_{2,3}}$ in pink.]
      {\includegraphics[width=0.35 \textwidth]{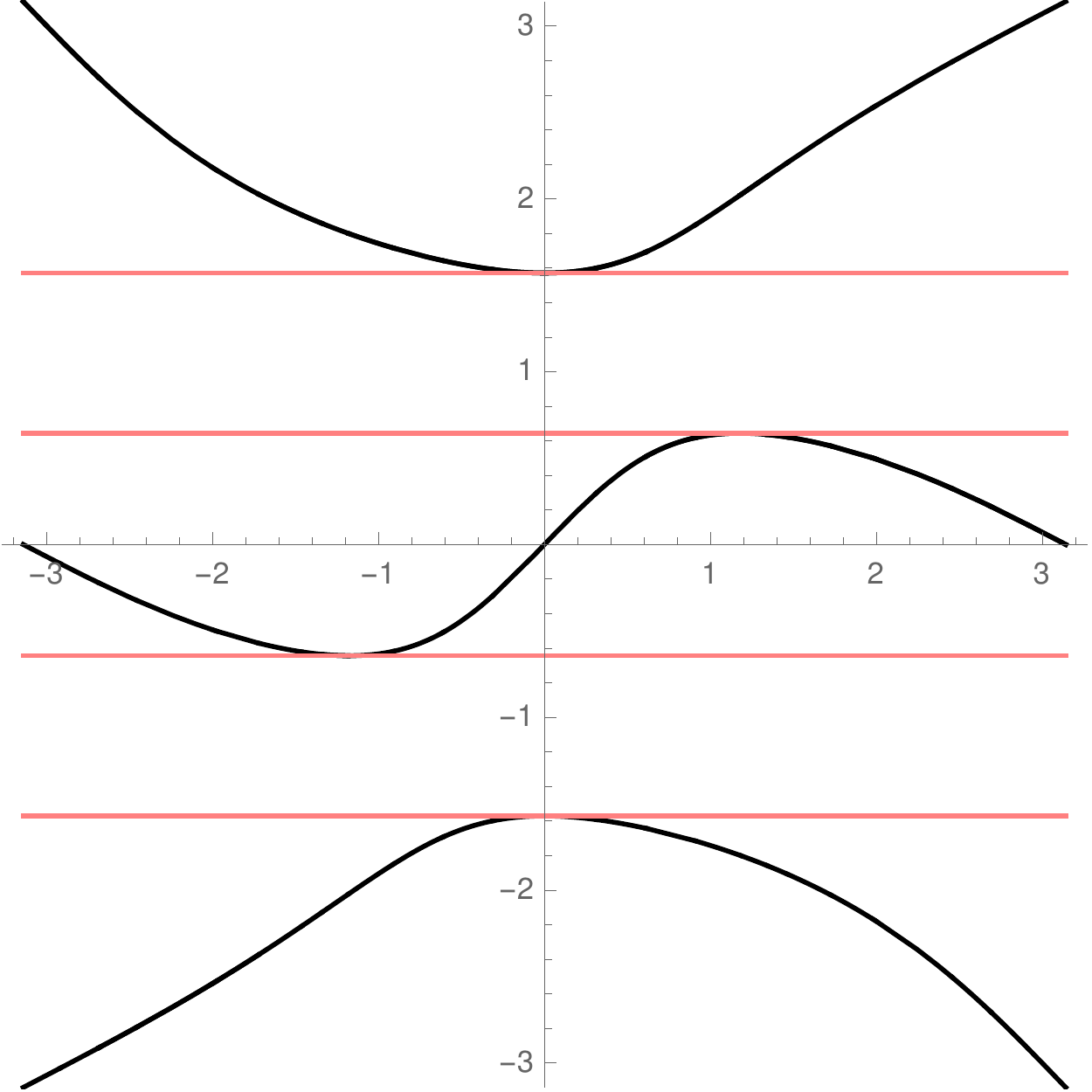}}
  \caption{\textsl{Iteration of $\Phi$ given by \eqref{multp} on $\mathbb{T}^2$. (Successive images of the vertical axis marked red)}}
  \label{MultSingplots}
\end{figure}
We turn to a RIF with multiple singularities on $\mathbb{T}^2$ and multiple rotation belts. Consider the RIF $\phi=-\frac{\tilde{p}}{p}$ with
\begin{equation}
p(z)=4 - z_1 + z_1z_2 - 3z_1z_2^2 - z_1z_2^3 \quad \textrm{and}\quad \tilde{p}(z)=4z_1z_2^3 - z_2^3 +z_2^2 - 3z_2 - 1,
\label{multp}
\end{equation}
a slightly modified form of  \cite[Example 7.4]{BPS20} (see also \cite[Example 5.4]{BCSprep}).

We have $p(1,1)=0$ and $p(-1,-1)=0$, and a computation shows that $\phi(z_1,1)=1$ and $\phi(z_1,-1)=-1$, confirming the presence of two collapsing fibers as guaranteed by Lemma \ref{lem:collapsefiberlem}, and so $\phi$ has two SF-points on the $2$-torus.  

We read off that
\[p_1(z_2)=4 \quad  \textrm{and}\quad p_2(z_2)=z_2^3+3z_2^2-z_2+1\]
and 
\[\tilde{p}_1(z_2)=4z_2^3 \quad \textrm{and} \quad \tilde{p}_2(z_2)=z_2^3-z_2^2+3z_2+1,\]
and after some simplifications (using that $\alpha=0$ here), we find that
\begin{equation}
Q_{\alpha}(z_2)=4(z_2+1)^2(z_2^2+1)(5z_2^2-8z_2+5).
\label{multQ}
\end{equation}
The polynomial $Q_{\alpha}$ has a double root at $z_2=-1$ (the $z_2$-coordinate of the SF-point at $(-1,-1)$), and simple roots at $z_2=\pm i$ and $z_2=\frac{1}{5}(4\pm3i)$. On the other hand $Q_{\alpha}(1)\neq 0$. This is reflected in the images in Figure \ref{MultSingplots}: $\Gamma(\Phi)$ has two components coming together at $(-1,-1)\in \Lambda^{\flat}$ with a normal crossing, while $(1,1)\in \Lambda^{\flat}$ is a generic point, in the sense that $\Gamma(\Phi)$ does not have a self-crossing. The simple zeros of $Q_{\alpha}$ correspond to parabolic points bounding two distinct rotation belts, and these in turn lie between two arrangements of curves with hyperbolic fixed points.
\end{ex} 

\section*{Acknowledgments}

We thank J.E. Pascoe and the noncommutative dynamics reading group organized by David Jekel for providing inspiration for this work.


\begin{thebibliography}{}

\end{thebibliography}


\begin{thebibliography}{alpha}
\bibliographystyle{apalike}

\bibitem{Aba98}M. Abate, The Julia-Wolff-Carath\'eodory theorem in polydisks, J. Anal. Math. {\bf 74} (1998), 275-306.

\bibitem{AMS06}J. Agler, J.E. M\McC Carthy, and M. Stankus, Toral algebraic sets and function theory on polydisks, J. Geom. Anal. {\bf 16} (2006), 551-562.

\bibitem{AMY12}J. Agler, J.E. M\McC Carthy, and N.J. Young, A Carath\'eodory theorem for the bidisk via Hilbert space methods, Math. Ann. {\bf 352} (2012), 581-624.

\bibitem{AG16}L. Arosio and P. Gumenyuk, Valiron and Abel equations for holomorphic self-maps of the polydisc, Internat. J. Math. {\bf 27} (2016), 1650034.

\bibitem{BearBook}A. Beardon, {\it Iteration of rational functions.} Graduate Texts in Mathematics 132, Springer-Verlag, New York. 1991.

\bibitem{BCSprep}K. Bickel, J.A. Cima, and A.A. Sola, Clark measures for rational inner functions, preprint, available at https://arxiv.org/abs/2101.00508

\bibitem{BKPSprep}K. Bickel, G. Knese, J.E. Pascoe, and A. Sola, Local theory of stable polynomials and bounded rational functions of several variables, preprint 2021.

\bibitem{BPS18}K. Bickel, J.E. Pascoe, and A. Sola, Derivatives of rational inner functions: geometry of singularities and integrability at the boundary, Proc. London Math. Soc. \textbf{116} (2018), 281-329.

\bibitem{BPS20}K. Bickel, J. E. Pascoe, and A. Sola, Level curve portraits of rational inner functions, Ann. Sc. Norm. Sup. Pisa Cl. Sc. {\bf XXI} (2020), 451-494.

\bibitem{BPSajm}K. Bickel, J. E. Pascoe, and A. Sola, Singularities of rational inner functions in higher dimensions, Amer. J. Math., to appear.

\bibitem{CGBook}L. Carleson and T.W.  Gamelin, {\it Complex dynamics}, Universitext: Tracts in Mathematics, Springer-Verlag, New York, 1993.

\bibitem{C32}H. Cartan, Les fonctions de deux variables complexes. L'it\'eration des transformations int\'erieures d'un domaine born\'e (French), Math. Z. {\bf 35} (1932), 760-773.

\bibitem{F03}C. Favre, Les applications monomiales en deux dimensions, Michigan Math. J. {\bf 51} (2003), 467-475.

\bibitem{ForSurv}J.E. Fornaess, {\it Dynamics in several complex variables}. CBMS Regional Conference Series in Mathematics, 87. Providence, RI, 1996.

\bibitem{F05}C. Frosini, Dynamics on bounded domains, in: {\it The $p$-harmonic equation and recent advances in analysis}, 99-117, Contemp. Math. {\bf 370} Amer. Math. Soc., Providence, RI, 2005.

\bibitem{H54}M. Herv\'e, Sur l'it\'eration des transformations analytiques dans le bicercle unit\'e, Ann. Sci. Ecole Norm. Sup. (3) {\bf 71} (1954), 1-28.

\bibitem{J99}M. Jonsson, Dynamics of polynomials skew-products on $\mathbb{C}^2$, Math. Ann. {\bf 314} (1999), 403-447.

\bibitem{Kne10}G. Knese, Polynomials defining distinguished varieties, Trans. Amer. Math. Soc. {\bf 362} (2010), 5635-5655.

\bibitem{Kne15}G. Knese, Integrability and regularity of rational functions, Proc. London. Math. Soc. {\bf 111} (2015), 1261-1306.

\bibitem{MilBook}J. Milnor, {\it Dynamics in one complex variable}. Annals of Mathematics Studies 160, Princeton Univ. Press, Princeton, NJ, 2006. 

\bibitem{NThesis}J. Nowell, Denjoy-Wolff sets for analytic maps on the polydisk, PhD Thesis, University of Florida, 2019.

\bibitem{PR19}H. Peters and J. Raissy, Fatou components of elliptic polynomial skew-products, Ergodic Theory Dynam. Systems {\bf 39} (2019), 2235-2247.

\bibitem{PR10}E.R. Pujals and R.K.W. Roeder, Two-dimensional Blaschke products: degree growth and ergodic consequences, Indiana Univ. Math. J. {\bf 59} (2010), 301-325.

\bibitem{PS08}E.R. Pujals and M. Shub, Dynamics of two-dimensional Blaschke products, Ergodic Theory Dynam. Systems {\bf 28} (2008), 575-585.

\bibitem{Rud69} W. Rudin, \emph{Function Theory in polydisks},
W. A. Benjamin, Inc., New York-Amsterdam, 1969.

\bibitem{SibSurv}N. Sibony, Dynamique des applications rationelles de $\mathbf{P}^k$. (French) {\it Dynamique et g\'eom\'etrie complexes (Lyon, 1997)}, Panor. Synth\`eses {\bf 8}, Soc. Math. France, Paris, 1999.

\bibitem{SBJ17}J. Slipantschuk, O.F. Bandtlow, and W. Just, Complete spectral data for analytic Anosov maps of the torus, Nonlinearity {\bf 30} (2017), 2667-2686.

\bibitem{TD17}R. Tully-Doyle, Analytic functions on the bidisk at boundary singularities via Hilbert space methods, Oper. Matrices {\bf 11} (2017), 55-70.

 \bibitem{TDpage}R. Tully-Doyle, Personal webpage \url{https://rtullydo.github.io/RISP-dynamics}.

\end{thebibliography}
\end{document}